\def\Log{{\rm Log}}
\def\aff{{\rm aff}}
\definecolor{cadmiumgreen}{rgb}{0.0, 0.42, 0.24}
\DeclareSymbolFont{extraup}{U}{zavm}{m}{n}
\DeclareMathSymbol{\varheart}{\mathalpha}{extraup}{86}
\DeclareMathSymbol{\vardiamond}{\mathalpha}{extraup}{87}
\theoremstyle{definition}
\newtheorem{theorem}{Theorem}[section]
\newtheorem{definition}[theorem]{Definition}
\newtheorem{lemma}[theorem]{Lemma}
\newtheorem{proposition}[theorem]{Proposition}
\newtheorem{corollary}[theorem]{Corollary}
\newtheorem*{theorem*}{Theorem}
\newtheorem{remark}[theorem]{Remark}
\newtheorem{example}[theorem]{Example}
\newcommand{\R}{\mathbb{R}}
\newcommand{\Z}{\mathbb{Z}}
\newcommand{\lto}{\longrightarrow}
\newcommand{\lmto}{\longmapsto}
\newcommand{\CC}{\mathbb{C}}
\newcommand{\C}{\mathbb{C}}
\newcommand{\trop}{\mathrm{trop}}
\newcommand{\cD}{\mathcal{D}}
\newcommand{\sD}{\mathscr{D}}
\newcommand{\sC}{\mathscr{C}}
\newcommand{\sU}{\mathscr{U}}
\newcommand{\cO}{\mathcal{O}}
\newcommand{\cC}{\mathcal{C}}
\newcommand{\sT}{\mathscr{T}}
\newcommand{\sR}{\mathscr{R}}
\newcommand{\sS}{\mathscr{S}}
\newcommand{\sW}{\mathscr{W}}
\newcommand{\sV}{\mathscr{V}}
\newcommand{\fq}{\mathfrak{q}}
\newcommand{\Trop}{\mathrm{Trop}}
 \newcommand{\supp}[1]{{\mathrm{supp}(#1)}}
\newtheorem{itheorem}{Theorem}
\newtheorem{iproblem}[itheorem]{Problem}
\newtheorem{iproposition}[itheorem]{Proposition}
\begin{document}
\title{CONTINUITY OF THE SUPERPOTENTIALS AND SLICES OF TROPICAL CURRENTS}
\author{Farhad Babaee}
\author{Tien Cuong Dinh}
\address{School of Mathematics, University of Bristol}
\email{farhad.babaee@bristol.ac.uk}
\address{Department of Mathematics, National University of Singapore}
\email{matdtc@nus.edu.sg}
\date{}
\begin{abstract}
We study the question of the continuity of slices of currents and explain how it relates to several seemingly unrelated problems in tropical geometry. On the one hand, through this lens, we show that the continuity of superpotentials constitutes a very general instance of stable intersection theory in tropical geometry. On the other hand, questions concerning tropicalisation with respect to a non-trivial valuation and the commutativity of tropicalisation with intersections offer insights to the problem of the continuity of slices of currents converging to a tropical current. Within our framework, we reformulate and reprove known theorems in tropical geometry and derive new results and techniques in both tropical geometry and complex dynamical systems.
\end{abstract}
\keywords{Tropical Geometry - Superpotential Theory - Slicing Theory}
\subjclass[2020]{14T10, 37F80, 32U40}
\maketitle
\setcounter{tocdepth}{1}
\tableofcontents
\section{Introduction}
Let $X$ be a complex manifold of dimension $d,$ and $p,q$ non-negative integers with $d= p+q.$ We denote by $\cC^{q}(X)= \cC_{p}(X)$ the cone of positive closed bidegree $(q,q)$, or bidimension $(p,p)$-currents on $X.$ We also consider $\sD^q(X) = \sD_p(X),$ the $\R$-vector space spanned by $\sC^q(X).$ It is well-known that the intersection of two positive closed currents is not always defined. The main initial progress was due to the works of Federer \cite{Federer}. Federer defined the generic slicing theory of currents: for a dominant holomorphic map $f: X \lto Y$, and a positive closed current $\sT\in \cC_p(X) $, or more generally, a \emph{flat current}, a \emph{slice}
$$
\sT\wedge [f^{-1}(y)]   
$$
is well-defined for a generic $y \in Y.$ A different but approach was developed by Bedford and Taylor, and extended by Demailly \cite{Dem-analytic1}, and Fornaess--Sibony \cite{Fornaess-Sibony} which assert the following: if $\sS= dd^c u$ is a bidegree $(1,1)$-current, then 
$$
\sS \wedge \sT := dd^c(u \sT),
$$
can be defined or it is \emph{admissible}, when 
\begin{itemize}
    \item The potential, $u$, is bounded;
    \item $u$ is unbounded, but its unbounded locus has a small intersection with $\supp{\sT}.$
\end{itemize}
For instance, when $\sS = dd^c \log|f|$ and $\sT$ are two integration currents, such that their supports 
intersect in the expected dimension, then 
$$
\sS \wedge \sT = \sum c_i [C_i],
$$
where each $C_i$ is an irreducible component of the intersection, and $c_i$ is the corresponding vanishing number. This intersection coincides with the slicing of integration currents. 
\vskip 2mm
Demailly in \cite{Dem-gaz} asked the question of generalising the intersection theory to the case where $\sT$ is of a higher bidegree. In several works, the second-named author and Sibony introduced \emph{superpotential theory} and \emph{density} of currents to answer this question. In \cite{Dinh-Sibony:superpot}, the situation where $X$ is a homogeneous space was completely resolved, and in \cite{Dinh-Sibony:Kahler} the authors investigated the intersection theory for currents with continuous superpotentials on K\"ahler manifolds, which generalises the case of bounded potentials in bidegree $(1,1)$. In this article, we adopt the superpotential theory and the slicing theory. Let us also note that the works of Anderson, Eriksson, Kalm, Wulcan and Yger \cite{Yger-et-al}, and \cite{Anderson--Kalm--Wulcan} also consider a non-proper intersection theory.
\vskip 2mm
 Once the intersections are defined, one can ask the following \emph{continuity problem}:
\begin{iproblem}\label{prob:cont}
    Let $\sT_k$ be a sequence of positive closed currents on $X$ converging to $\sT.$  Let $\sS$ be also a positive closed current on $X.$ Find sufficient conditions such that 
    $$
     \lim_{k \to \infty} ( \sS \wedge \sT_k) =  \sS \wedge (\lim_{k\to \infty} \sT_k).
    $$
\end{iproblem}

We say that a current $\sS$ is on a compact K\"ahler manifold has a \emph{continuous superpotential}, if for any current $\sT$, the wedge product
$$
\sS \wedge \sT := \lim_{n \to \infty} (\sS \wedge \sT_n),
$$
is independent of the choice of smooth approximation $\sT_n \lto \sT$ (see Definition \ref{def:CSP}). Employing  results from \cite{Dinh-Sibony:Kahler}, we can partially answer Problem \ref{prob:cont}: 
\begin{iproposition}[See Proposition \ref{prop:slicing-CSP}]\label{iprop:slicing-CSP}
   Let $X$ be a compact K\"ahler manifold, $\sT_n \lto \sT$ be a convergent sequence in $\sD^p(X).$ If a current $\sS$ has a continuous superpotential, then 
    $$
    \sS \wedge \sT_n \lto \sS \wedge \sT. 
    $$
\end{iproposition}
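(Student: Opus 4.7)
The plan is to reduce the general convergent sequence $\sT_n \lto \sT$ in $\sD^p(X)$ to a sequence of \emph{smooth} approximations of $\sT$, for which the continuous-superpotential hypothesis applies by Definition \ref{def:CSP}. The main tool is a diagonal extraction combined with the classical smooth regularization of currents on a compact K\"ahler manifold (for instance, convolution with a kernel supported near the diagonal of $X\times X$ as used in \cite{Dinh-Sibony:Kahler}). By linearity of $\sS\wedge\cdot$ one may assume the $\sT_n$ and $\sT$ are positive and closed.

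For each $n$, fix a smooth approximation $\sT_n^{(k)}\lto\sT_n$ as $k\to\infty$ in $\sD^p(X)$. The hypothesis, applied to the current $\sT_n$ with this smooth approximation, yields
$$
\sS\wedge\sT_n^{(k)} \lto \sS\wedge\sT_n \qquad \text{as } k\to\infty.
$$
Fix also a countable family $(\phi_m)_{m\ge 1}$ of smooth test forms of the relevant bidegree that is $C^\infty$-dense in the space of test forms. For each $n$, choose $k(n)$ so large that
$$
|\langle \sT_n^{(k(n))}-\sT_n,\phi_m\rangle|<\tfrac{1}{n} \qquad \text{and} \qquad |\langle \sS\wedge\sT_n^{(k(n))}-\sS\wedge\sT_n,\phi_m\rangle|<\tfrac{1}{n}
$$
hold for all $m\le n$; this is possible by the convergence above. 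The triangle inequality combined with $\sT_n\to\sT$ then shows that the \emph{smooth} diagonal sequence $\sT_n^{(k(n))}$ converges weakly to $\sT$, so the hypothesis applied to this smooth approximation of $\sT$ yields $\sS\wedge\sT_n^{(k(n))}\to\sS\wedge\sT$. Combining with the second diagonal estimate gives $\langle \sS\wedge\sT_n,\phi_m\rangle\to\langle \sS\wedge\sT,\phi_m\rangle$ for each fixed $m$.

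The main obstacle is upgrading this convergence on the countable dense family $(\phi_m)$ to full weak convergence in $\sD^\bullet(X)$. For positive $\sT_n$ with uniformly bounded mass (a consequence of weak convergence of a positive sequence on the compact manifold $X$), the family $(\sS\wedge\sT_n)_n$ is itself positive with total mass controlled by the cohomology pairing $\{\sS\}\cdot\{\sT_n\}$, hence uniformly bounded. Weak compactness of mass-bounded positive currents then ensures that every subsequential weak limit of $\sS\wedge\sT_n$ agrees with $\sS\wedge\sT$ on the dense family $(\phi_m)$, hence equals $\sS\wedge\sT$; this forces the full sequence $\sS\wedge\sT_n$ to converge weakly to $\sS\wedge\sT$.
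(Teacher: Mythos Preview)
Your argument is correct, but it takes a more circuitous route than the paper's. The paper's proof is essentially a one-liner via the explicit formula \eqref{eq:sp-formula}: for a fixed test form $\varphi$,
\[
\langle \sS\wedge\sT_n,\varphi\rangle=\sU_{\sS}(\sT_n\wedge dd^c\varphi)+\langle\beta\wedge\sT_n,\varphi\rangle,
\]
and both terms converge as $n\to\infty$ --- the first because $\sT_n\wedge dd^c\varphi\to\sT\wedge dd^c\varphi$ in $\sD^{\bullet,0}(X)$ with bounded $*$-norm and $\sU_{\sS}$ is $*$-continuous by hypothesis, the second because $\beta$ is smooth. The citation of \cite{Dinh-Sibony-Reg} is only there to guarantee that $\sT_n\wedge dd^c\varphi$ lies in the domain where the extended superpotential is defined.

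Your approach instead avoids the explicit formula entirely and works from the informal characterisation in the introduction (continuous superpotential $\Leftrightarrow$ independence of the smooth approximation defining $\sS\wedge\cdot$). The diagonal extraction plus compactness is a legitimate and self-contained way to upgrade ``continuity along smooth sequences'' to ``continuity along all $*$-bounded sequences''. This buys you a proof that does not need to unpack \eqref{eq:sp-formula}, at the cost of a longer argument with an extra compactness step. One small caveat: your reduction ``by linearity one may assume the $\sT_n$ are positive'' is not quite right as stated, since a decomposition $\sT_n=\sT_n^+-\sT_n^-$ with bounded masses need not have $\sT_n^{\pm}$ converging separately. You do not actually need that reduction: bounded $\|\sT_n\|_*$ already gives bounded $\|\sS\wedge\sT_n\|_*$ (via the cohomology pairing applied to any decomposition), and that is all the final compactness step requires. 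Similarly, for the diagonal sequence $\sT_n^{(k(n))}$ you should note that the regularization preserves $*$-bounds, so that it converges to $\sT$ in $\sD^p(X)$ and not merely weakly.
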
 
Problem~\ref{prob:cont} becomes more subtle when the current $\sS$ is an integration current. For instance, when $\sS$ and $\sT$ are integration currents and have improper intersections, we cannot, in general, find $\sT_n\lto \sT$ and hope that the slices converge. 
\vskip 2mm
In this article, we examine the problem of slicing with integration currents in the specific case where $\lim_{k \to \infty} \sT_k$  is a \emph{complex tropical current} and $S$ a complex torus.
The complex tropical currents (see \cite{Babaee, BH}) are closed currents on complex tori $(\C^*)^d$ or on a toric variety associated to a \emph{tropical cycle}. Recall that a tropical cycle is a weighted polyhedral complex satisfying the \emph{balancing condition} (see Definition \ref{BalancingCondition}). For a tropical cycle $\cC \subseteq \R^d,$ of dimension $p,$ the associated tropical current $\sT_{\cC} \in \sD_p((\C^*)^d),$ is a closed current with support $\Log^{-1}(\cC),$ where 
$$
\Log: (\C^*)^d \lto \R^d, \quad (z_1, \dots , z_d) \lmto (-\log|z_1|, \dots , -\log|z_d|).
$$
The tropical current $\sT_{\cC}$ can be naturally presented as a locally fibration of $\Log^{-1}(\cC),$ and we say $\cC$ is compatible with the fan $\Sigma,$ if the fibres of $\overline{\sT}_{\cC}$ intersect the toric invariant divisors of the toric variety $X_{\Sigma}$ transversely. Here $\overline{\sT}_{\cC}$ denotes the extension by zero of $\sT_{\cC}$ to the toric variety $X_{\Sigma}.$

\begin{itheorem}[See Theorem \ref{thm:trop-curr-CSP}]\label{ithm:trop-curr-CSP}
    Let $X_{\Sigma}$ be a smooth projective toric variety, and let $\cC$ be a tropical cycle compatible with $\Sigma.$ Then 
    $\overline{\sT}_{\cC}$ has a continuous superpotential. 
\end{itheorem}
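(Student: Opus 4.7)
My plan is to produce explicit smooth approximations $\overline{\sT}_{\cC,\epsilon}$ of $\overline{\sT}_\cC$ by averaging along the $\Log$-base, and to upgrade the smoothness of tropical currents in the $\Log$-direction to uniform control of the associated superpotentials, with the compatibility hypothesis supplying the essential estimates near the toric boundary.

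I would begin from the local product description of $\sT_\cC$ on $(\C^*)^d$: above the relative interior of a maximal cell $\tau$ of $\cC$, each summand $\sT_\tau$ in the decomposition $\sT_\cC = \sum_\tau w_\tau \sT_\tau$ is smooth, being the push-forward of a canonical $(p,p)$-form along a smooth compact-$p$-torus fibration sitting above $\tau$. Thus $\sT_\cC$ is smooth off the preimage of the codimension-$\geq 1$ skeleton of $\cC$. I then smooth it by $\R^d$-convolution: let $\R^d$ act on $(\C^*)^d$ by translation through $\Log$, pick a smooth probability density $\rho_\epsilon$ on $\R^d$ shrinking to the Dirac measure at $0$, and set
$$
\sT_{\cC,\epsilon} := \int_{\R^d} (\tau_v)_* \sT_\cC \, \rho_\epsilon(v)\, dv.
$$
Then $\sT_{\cC,\epsilon}$ is smooth, positive and closed on $(\C^*)^d$, and converges weakly to $\sT_\cC$. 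Since the averaging is $\Log$-equivariant, the transversality of the fibres of $\overline{\sT}_\cC$ with the toric invariant divisors---the compatibility hypothesis---is inherited by $\overline{\sT}_{\cC,\epsilon}$, which therefore extends to a smooth positive closed $(d-p,d-p)$-form on $X_\Sigma$; moreover, its cohomology class is independent of $\epsilon$.

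It remains, via Dinh--Sibony's inductive framework for superpotentials, to verify that for a fixed reference $\epsilon_0>0$ the canonical potentials $u_{\epsilon,\epsilon_0}$ solving $\overline{\sT}_{\cC,\epsilon} - \overline{\sT}_{\cC,\epsilon_0} = dd^c u_{\epsilon,\epsilon_0}$, at each level of the induction, are uniformly bounded on $X_\Sigma$ and converge uniformly as $\epsilon \to 0$. On compact subsets of $(\C^*)^d$ this is immediate from the smooth product structure: the $u_{\epsilon,\epsilon_0}$ may be computed explicitly by averaging the fiberwise potentials of the individual $\sT_\tau$. The main difficulty---and where I expect the heart of the argument to lie---is the uniform $L^\infty$-estimate in a neighbourhood of the toric boundary: without compatibility, the potentials could blow up along components of the toric boundary which are tangent at infinity to cells of $\cC$. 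Compatibility rules this out, and a local computation in toric coordinates near each boundary orbit, using the transverse intersection of the fibres of $\overline{\sT}_\cC$ with the boundary divisors, should give the required uniform bound. Uniform convergence of the $u_{\epsilon,\epsilon_0}$ then implies that the wedge product $\overline{\sT}_\cC \wedge \sT := \lim_n \overline{\sT}_\cC \wedge \sT_n$ is well-defined and independent of the smooth approximation $\sT_n \to \sT$; that is, $\overline{\sT}_\cC$ has a continuous superpotential.
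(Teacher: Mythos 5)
Your regularisation $\sT_{\cC,\epsilon}=\int(\tau_v)_*\sT_\cC\,\rho_\epsilon(v)\,dv$ by convolution in the $\Log$-direction is a reasonable construction (such averages do produce smooth positive closed forms on $(\C^*)^d$ converging weakly to $\sT_\cC$), but the proposal has a genuine gap at its core: producing smooth approximants, even with uniformly bounded ``potentials,'' does not establish continuity of the superpotential in bidegree $(q,q)$ with $q\geq 2$. Every positive closed current on a compact K\"ahler manifold admits approximation by smooth closed forms with bounded mass (this is the Dinh--Sibony regularisation used in Proposition \ref{prop:slicing-CSP}), yet not every such current has a continuous superpotential. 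The superpotential of $\overline{\sT}_\cC$ is the functional $\sS\mapsto\langle\overline{\sT}_\cC,U_\sS\rangle$ on currents $\sS\in\sD^{d-q+1,0}(X_\Sigma)$, and what must be shown is that $\sU_{\sT_{\cC,\epsilon}}$ converges \emph{uniformly on $*$-bounded sets of $\sS$} --- i.e.\ that $\langle\sT_{\cC,\epsilon}-\sT_{\cC,\epsilon'},U_\sS\rangle$ is controlled uniformly over all $\sS$ with (singular) potentials $U_\sS$. Your plan to bound solutions $u_{\epsilon,\epsilon_0}$ of $\overline{\sT}_{\cC,\epsilon}-\overline{\sT}_{\cC,\epsilon_0}=dd^cu_{\epsilon,\epsilon_0}$ in $L^\infty$ only makes literal sense when these are functions, i.e.\ when $q=1$; for higher codimension they are $(q-1,q-1)$-currents, the truncated summands $\mathbbm 1_{\Log^{-1}(\sigma)}\sT_{\mathrm{aff}(\sigma)}$ are not even closed so have no ``fiberwise potentials'' to average, and no mechanism is offered for converting local bounds into uniform convergence of the superpotential functional. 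The claim that $\overline{\sT}_{\cC,\epsilon}$ extends \emph{smoothly} across the toric boundary is an additional unproven (though plausible) assertion deferred to a ``local computation.''

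The paper's proof sidesteps all of this by a structural reduction. First, for a tropical hypersurface the extension $\overline{\sT}_{\Trop(\fq)}$ is shown to be $dd^cg$ with $g$ continuous near every boundary point (Lemma \ref{lem:ext-hyper-toric}); this is the only place where a genuine potential is estimated, and it is the honest $(1,1)$ case. Second, each affine piece satisfies $\overline{\sT}_{\mathrm{aff}(\sigma)}\leq\overline{\sT}_{H_1}\wedge\dots\wedge\overline{\sT}_{H_{d-p}}$, so the affine extension $\sT_{\widehat\cC}=\sum w_\sigma\sT_{\mathrm{aff}(\sigma)}$ acquires a continuous superpotential from the wedge-product machinery of Theorem \ref{thm:wedge-cont-pot}. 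Third --- and this is the step your approach has no substitute for --- the domination $\sT_\cC\leq\sT_{\widehat\cC}$ (positivity of the weights) combined with the Dinh--Nguyen--Vu comparison theorem (Theorem \ref{thm:superpot-ineq}) transfers continuity of the superpotential to $\overline{\sT}_\cC$ on a refined fan $\widehat\Sigma$, and a blow-down argument (Theorem \ref{thm:CS-blowup}) returns to $X_\Sigma$. If you wish to pursue the direct regularisation route, you would need to prove uniform convergence of $\sU_{\sT_{\cC,\epsilon}}$ on bounded sets of test currents, which essentially amounts to redoing the domination argument by hand.
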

The preceding theorem allows us to define the intersection product of a tropical current with any current on a compatible toric variety. We can then restrict this intersection product to the complex torus \( T_N \subseteq X_{\Sigma} \) and, using the isomorphism \( T_N \simeq (\mathbb{C}^*)^d \), define the intersection product of two tropical currents in \( (\mathbb{C}^*)^d \).
\vskip 2mm
On the tropical geometry side, there exists a \emph{stable intersection theory} for tropical cycles. The term “stable” refers precisely to the continuity of the intersection under generic translations of tropical cycles. This property is reminiscent of the \emph{fan displacement rule} in toric geometry (see \cite{Fulton--Sturmfels}), and more generally, of the \emph{moving lemma} and \emph{dynamic intersections} in algebraic geometry; see \cite{Fulton_int}. Following \cite{Fulton--Sturmfels}, the concept of stable intersection theory was studied in \cite{Kazarnovski}, \cite{RGST} and \cite{Mikh-trop-appli}, and extended in \cite{Jensen-Yu}. A new but equivalent approach was developed by \cite{All-Rau}. In the context of supercurrents, Lagerberg \cite{Lagerberg} derived an intersection theory for hypersurfaces. We also refer the reader to \cite{Shaw-Int-Mat, Gubler, Mihatsch} for related developments.
\vskip 2mm
\noindent
In the framework of this article, given Theorem \ref{ithm:trop-curr-CSP}, the stable intersection theory is a very special case of Proposition \ref{iprop:slicing-CSP}. 
\vskip 2mm
With the stable intersection and natural addition of tropical cycles, we have the $\mathbb{Q}$-algebra of the tropical cycles. 
\begin{itheorem}[See Theorem \ref{thm:Q-alg}]\label{thm:into_ring_iso}
   The assignment $ \cC \lmto \sT_{\cC} $ induces a $\mathbb{Q}$-algebra  isomorphism between
   \begin{itemize}
        \item [(a)] The $\mathbb{Q}$-algebra of tropical currents on $(\C^*)^d$ with the usual addition of currents and the wedge product.
        \item [(b)] The $\mathbb{Q}$-algebra generated by the tropical cycles in $\R^d$ with the natural addition  (Definition \ref{def:add-trop}) and stable intersection (Definition \ref{def:stable-int}).
   \end{itemize}
Further, considering the map $\Phi_m: (\C^*)^d \lto (\C^*)^d$,  $z\lmto z^m,$ we have the isomorphism of the following:
\begin{itemize}
\item [(a')] The $\mathbb{Q}$-algebra of $\Phi_m$-invariant tropical currents on $(\C^*)^d$ with the usual addition of currents and the wedge product.
\item [(b')] The $\mathbb{Q}$-algebra generated by the tropical fans in $\R^d$ with the natural addition  and stable intersection. 
\end{itemize}
\end{itheorem}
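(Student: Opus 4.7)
The plan is to verify the assignment $\cC \mapsto \sT_\cC$ is well-defined on the two $\mathbb{Q}$-algebras, and to check separately that it is (i) additive, (ii) multiplicative, (iii) injective, (iv) surjective. Additivity, injectivity and surjectivity are relatively soft; the heart of the theorem is multiplicativity, i.e.\ $\sT_{\cC_1 \cdot_{st} \cC_2} = \sT_{\cC_1} \wedge \sT_{\cC_2}$.

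\textbf{Easy parts.} Additivity is immediate from the construction of $\sT_\cC$ on a common polyhedral refinement of $\cC_1$ and $\cC_2$, since the fibration structure and the weights both add. Surjectivity in (a) is by definition of the $\mathbb{Q}$-algebra of tropical currents. For injectivity, I would argue that $\operatorname{supp}(\sT_\cC) = \Log^{-1}(|\cC|)$ recovers the support $|\cC|$, and the multiplicity of $\sT_\cC$ along a generic point of $\Log^{-1}(\sigma)$ for a facet $\sigma$ of $\cC$ recovers the corresponding weight $w_\sigma$, so that $\sT_\cC = 0$ forces $\cC = 0$.

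\textbf{Multiplicativity --- the main step.} I would first refine and choose a smooth projective toric variety $X_\Sigma$ compatible with both $\cC_1$ and $\cC_2$; such a $\Sigma$ exists after subdivision. By Theorem \ref{ithm:trop-curr-CSP}, the extensions $\overline{\sT}_{\cC_i}$ have continuous superpotentials on $X_\Sigma$, so by Proposition \ref{iprop:slicing-CSP} the wedge product with $\overline{\sT}_{\cC_1}$ is continuous on converging sequences of positive closed currents. Now use the definition of stable intersection via generic translation: for $v$ in a dense open subset of $\R^d$, the translated cycle $\cC_1 + v$ meets $\cC_2$ transversely (codimensions add on every cell of a common refinement) and
\[
\cC_1 \cdot_{st} \cC_2 = \lim_{v \to 0} (\cC_1 + v) \cap \cC_2
\]
with multiplicities given by the fan displacement rule. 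The translation $\tau_v$ on $\R^d$ lifts to an automorphism of $(\C^*)^d$ that sends $\sT_{\cC_1}$ to $\sT_{\cC_1 + v}$, hence $\sT_{\cC_1 + v} \to \sT_{\cC_1}$ as $v \to 0$. The key computation is to check, for such transverse $v$, the identity
\[
\sT_{\cC_1 + v} \wedge \sT_{\cC_2} \;=\; \sT_{(\cC_1 + v) \cap \cC_2},
\]
by unravelling the local product structure of the two tropical currents over a transverse facet and matching the multiplicity with the fan displacement weight. Taking $v \to 0$, the left side converges to $\sT_{\cC_1} \wedge \sT_{\cC_2}$ by continuity of the superpotential, and the right side converges to $\sT_{\cC_1 \cdot_{st} \cC_2}$ by the definition of stable intersection. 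This yields the desired ring homomorphism property. The main obstacle will be the transverse identity above: it is essentially local but requires carefully tracking the balanced-chain interpretation on both sides, including the orientations and the integer multiplicities.

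\textbf{The $\Phi_m$-invariant case.} The assignment intertwines the scaling $x \mapsto m x$ on $\R^d$ with the endomorphism $\Phi_m$ on $(\C^*)^d$, so $\sT_\cC$ is $\Phi_m$-invariant if and only if $\cC$ is invariant under scaling by $m$, which (for any $m \geq 2$) characterises tropical fans among tropical cycles. Addition and stable intersection of tropical fans are again tropical fans, and wedge products of $\Phi_m$-invariant currents are $\Phi_m$-invariant; hence restricting the isomorphism of (a) and (b) to the $\Phi_m$-invariant subalgebras yields the isomorphism of (a') and (b').
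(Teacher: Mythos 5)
Your overall strategy coincides with the paper's: the theorem is reduced to the multiplicativity statement $\sT_{\cC_1}\wedge\sT_{\cC_2}=\sT_{\cC_1\cdot\cC_2}$ (Theorem~\ref{thm:main-int}), which is proved by first treating the transverse case locally and then passing to the general case by generic translation $\cC_1\mapsto\cC_1+\epsilon b$ together with continuity of the wedge product under SP-convergence (Theorem~\ref{thm:wedge-cont-pot}). The additivity, injectivity via supports and facet multiplicities, and the characterisation of $\Phi_m$-invariant tropical currents as those associated to fans (via $m^{p-d}\Phi_m^*\sT_{\cC}=\sT_{\cC}$) all match the paper. So this is the same route, not a different one.

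However, you have left the central computation as a declared ``obstacle'' rather than proving it, and that is precisely where the content of the theorem lies. Two concrete points need to be filled in. First, the transverse identity $\sT_{\cC_1+v}\wedge\sT_{\cC_2}=\sT_{(\cC_1+v)\cap\cC_2}$ with the correct lattice-index weight: the paper establishes this by writing each current locally as an integral of fibre currents $[\pi_{\sigma_i}^{-1}(x_i)]$ over the compact tori, invoking Proposition~\ref{prop:int-numb-tori} (the subtori $T_{H_1\cap\Z^d}$ and $T_{H_2\cap\Z^d}$ meet in exactly $[N:N_{H_1}+N_{H_2}]$ points when $p+q=d$), and observing that these $\kappa$ points sweep out $(S^1)^d$ ``with speed $\kappa$'' as the parameters vary, which produces exactly the stable-intersection multiplicity; the case $p+q>d$ is then reduced to the complementary-dimension case by wedging with $\sT_L$ for $L=\aff(\sigma)^{\perp}$ and using associativity (Proposition~\ref{prop:assoc-commut}) plus an index computation modulo $N_\sigma$. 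Second, your assertion that ``the right side converges to $\sT_{\cC_1\cdot_{st}\cC_2}$ by the definition of stable intersection'' is not automatic: Definition~\ref{def:stable-int} gives a Hausdorff limit of supports and a prescription for summing weights, and one must separately verify that the associated currents converge and that no mass is lost or created in the limit; the paper does this by showing the limit current has support exactly $\Log^{-1}(\cC_1\cdot\cC_2)$ (using that multiplicities along $a_\epsilon$ are constant in $\epsilon$, and a further slicing by $\sT_L$ in the case $p+q>d$). Until these two steps are carried out, the ring-homomorphism property — and hence the isomorphism — is not established.
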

Works of \cite{Fulton--Sturmfels}, \cite{Kazarnovski} and \cite{Jensen-Yu} also imply the isomorphisms of $\mathbb{Q}$-algebra in (b') with the McMullen algebra of dimensional polytopes in $\mathbb{Q}^d$ as well as the Chow group of the compact space obtained as a direct limit of all complete smooth toric varieties; see Theorem \ref{thm:Q-alg} for further details. 
\vskip 2mm
We also address Problem~\ref{prob:cont} in a specific case involving the slicing of currents converging to a tropical current:
\begin{itheorem}[See Theorem \ref{thm:limit-closure}]\label{ithm:limit-closure}
Let $\mathcal{C} \subseteq \mathbb{R}^d$ be a tropical variety, and let $B = A_1 \cap \dots \cap A_k \subseteq \mathbb{R}^d$ be a complete intersection of rational hyperplanes $A_1, \dots, A_k$. Assume that $\mathcal{C}$ and $B$ intersect transversely. Let $\Sigma$ be a smooth, projective fan compatible with $\mathcal{C} + B$, and let $(\overline{\mathscr{S}}_n)$ be a sequence of positive closed currents on $X_{\Sigma}$. Suppose that:
\begin{itemize}
    \item[(a)] $\overline{\mathscr{S}}_n \lto \overline{\mathscr{T}}_{\mathcal{C}}$;
    \item[(b)] $\supp{\overline{\mathscr{S}}_n} \lto \supp{\overline{\mathscr{T}}_{\mathcal{C}}}$.
\end{itemize}

Then, the following limit holds in the smooth projective toric variety $X_{\Sigma}$:
\[
\lim_{n \to \infty} \left(\overline{\mathscr{S}}_n \wedge [\overline{T}^{B}]\right) = \overline{\mathscr{T}}_{\mathcal{C}} \wedge [\overline{T}^B].
\]
\end{itheorem}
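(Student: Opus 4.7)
The plan is to use a single double-approximation argument, avoiding an induction on $k$ by approximating $[\overline{T}^B]$ simultaneously in all $k$ hypersurface directions. For each rational hyperplane $A_j$, fix a defining equation $f_j = z^{m_j} - c_j$ on the torus, with $m_j \in \Z^d$ primitive normal to $A_j$ and $c_j \in \C^*$, so that $T^B = \{f_1 = \cdots = f_k = 0\}$. Introduce the smooth regularization $u_{j,\varepsilon} := \tfrac12 \log(|f_j|^2 + \varepsilon^2)$ and set
\begin{equation*}
   \omega_\varepsilon := dd^c u_{1,\varepsilon} \wedge \cdots \wedge dd^c u_{k,\varepsilon},
\end{equation*}
a smooth closed $(k,k)$-form on $X_\Sigma$ converging to $[\overline{T}^B]$ as $\varepsilon \to 0$, up to a smooth bounded correction arising from the extension to the toric boundary (controlled by the compatibility of $\Sigma$ with $\cC + B$).

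For fixed $\varepsilon > 0$, smoothness of $\omega_\varepsilon$ and hypothesis $(a)$ give $\overline{\sS}_n \wedge \omega_\varepsilon \lto \overline{\sT}_\cC \wedge \omega_\varepsilon$ as $n \to \infty$. On the other hand, the continuous superpotential of $\overline{\sT}_\cC$ (Theorem~\ref{ithm:trop-curr-CSP}) combined with Proposition~\ref{iprop:slicing-CSP} yields $\overline{\sT}_\cC \wedge \omega_\varepsilon \lto \overline{\sT}_\cC \wedge [\overline{T}^B]$ as $\varepsilon \to 0$. The theorem is therefore equivalent to exchanging the limits $\lim_n$ and $\lim_\varepsilon$ in the doubly-indexed family $\overline{\sS}_n \wedge \omega_\varepsilon$, which in turn reduces to the uniform estimate
\begin{equation*}
   \lim_{\varepsilon \to 0} \sup_n \bigl\| \overline{\sS}_n \wedge \omega_\varepsilon - \overline{\sS}_n \wedge [\overline{T}^B] \bigr\| = 0.
\end{equation*}

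The main obstacle is establishing this uniform estimate: $(a)$ alone controls global masses, but here we need to control masses on shrinking tubes. The idea is that by telescoping across the $k$ factors and applying Chern-Levine-Nirenberg-type inequalities, the difference $\overline{\sS}_n \wedge \omega_\varepsilon - \overline{\sS}_n \wedge [\overline{T}^B]$ is dominated by the trace mass of $\overline{\sS}_n$ on the tubes $U_\varepsilon^{(j)} := \{|f_j| < \varepsilon\}$ around $\overline{T}^{A_j}$. Hypothesis $(b)$ forces $\supp{\overline{\sS}_n}$ into an arbitrarily small Hausdorff-neighborhood of $\supp{\overline{\sT}_\cC}$ for $n$ large, and the transversality of $\cC$ with $B$, together with the compatibility of $\Sigma$, provides a local normal form near each point of $\cC \cap B$ in which the $f_j$ become coordinates transverse to the laminar structure of $\overline{\sT}_\cC$.

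In this normal form a direct computation gives $\|\overline{\sT}_\cC\|_{U_\varepsilon^{(j)}} = O(\varepsilon^{2})$ locally; the mass convergence $\|\overline{\sS}_n\| \to \|\overline{\sT}_\cC\|$ implied by $(a)$, combined with the support localization $(b)$, then transfers the estimate to $\overline{\sS}_n$ with $\sup_n \|\overline{\sS}_n\|_{U_\varepsilon^{(j)}} \to 0$ as $\varepsilon \to 0$. The hardest technical point is making this transfer fully uniform in $n$: any residual concentration of mass of $\overline{\sS}_n$ transverse to $T^{A_j}$ but close to $\Log^{-1}(\cC)$ would spoil the argument, and ruling it out is where the transverse intersection of $\cC$ and $B$, rather than mere proper intersection, is essential. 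Once the uniform estimate is in hand, the double-limit exchange yields the desired equality.
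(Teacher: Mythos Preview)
Your plan reduces the theorem to the uniform estimate
\[
\lim_{\varepsilon\to 0}\ \sup_n\ \bigl\|\,\overline{\sS}_n\wedge\omega_\varepsilon-\overline{\sS}_n\wedge[\overline{T}^B]\,\bigr\|=0,
\]
and you correctly flag this as the crux. The gap is that the ingredients you invoke---global mass convergence from (a), Hausdorff convergence of supports from (b), and transversality of $\cC$ with $B$---do not yield $\sup_n\|\overline{\sS}_n\|_{U_\varepsilon^{(j)}}\to 0$. Weak convergence of positive closed currents gives, for each fixed $\varepsilon$ with $\|\overline{\sT}_\cC\|(\partial U_\varepsilon^{(j)})=0$, that $\|\overline{\sS}_n\|_{U_\varepsilon^{(j)}}\to\|\overline{\sT}_\cC\|_{U_\varepsilon^{(j)}}$; but the threshold $N(\varepsilon)$ after which this holds can blow up as $\varepsilon\to 0$. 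Support convergence only pins $\supp{\overline{\sS}_n}$ near $\Log^{-1}(\cC)$; it says nothing about how mass is distributed \emph{along} that set, and in particular does not prevent mass of $\overline{\sS}_n$ from piling up near $\Log^{-1}(\cC)\cap \overline{T}^{A_j}$ at a rate that defeats the tube estimate. Transversality is a statement about the limit $\cC$, not about the sequence, so it cannot rule this out either. In short, you have isolated exactly the step that makes slicing hard, and then asserted it.

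The paper's proof avoids this uniform mass estimate entirely. It first reduces, via slicing with an auxiliary transverse tropical current $\sT_{a+L}$ (which has continuous superpotential, so Proposition~\ref{iprop:slicing-CSP} applies), to a situation where the slices are \emph{measures} on a compact torus. It then invokes the semi-continuity result of Berteloot--Dinh: for horizontal-like currents, any cluster value $\sS$ of the slices satisfies $\sS(\phi)\le\langle\sT|\pi|x\rangle(\phi)$ for every plurisubharmonic $\phi$. Testing against $\pm\phi$ for \emph{pluriharmonic} $\phi$ turns the inequality into an equality, and since real and imaginary parts of the characters $e^{2\pi i\langle\nu,\theta\rangle}$ are pluriharmonic, all Fourier coefficients of $\sS$ and of the limit slice agree. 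This determines the measure and forces convergence of the slices. The induction over $A_1,\dots,A_k$ and the passage to the toric boundary are then handled by a support argument (the analogue of your Lemma~\ref{lem:slice-supp}) and Demailly's first theorem of support. The Berteloot--Dinh/Fourier step is doing precisely the work that your uniform tube estimate was supposed to do, and it is not clear how to recover it from Chern--Levine--Nirenberg alone.
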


The proof relies on a theorem of Berteloot and the second-named author \cite{Berteloot--Dinh}, which states that the limit of slices satisfies a certain semi-continuity property for plurisubharmonic functions. This allows us to use Fourier analysis to prove theorems about tropical currents. We will see that this result, combined with \emph{dynamical tropicalisation} \cite{Dyn-trop}, implies analogues of the results in:
\begin{itemize}
    \item \cite{Jonsson} and \cite{Maclagan-Sturmfels}, concerning the fundamental theorem of tropical geometry and tropicalisation with respect to a non-trivial valuation (see Theorem \ref{thm:dyn-trop-nont});
    \item \cite{BJSST} and \cite{Osserman-Payne}, concerning the commutativity of intersection and tropicalisation (see Theorem \ref{thm:proper-int}).
\end{itemize}

\vskip 2mm
  It is notable that the setting of dynamical tropicalisation allows for the tropicalisation of differential forms and currents, enabling the formulation of theorems that would otherwise be impossible in the classical setting of tropical geometry. For instance:
\begin{itheorem}[See Theorem \ref{thm:averaging}]
    Let $W$ and $V$ be two smooth algebraic subvarieties of $(\C^*)^d$, and for $0<\epsilon <1,$ let $U_{\epsilon}((S^1)^d)$ be an $\epsilon$-neighbourhood of  $(S^1)^d.$ Then,
    $$
    \frac{1}{m^{2n-(p+q)}} \int_{(t_1, t_2)\in   U_{\epsilon}((S^1)^{2n})}\Phi_m^*\big( [t_1 V] \wedge [t_2 W] \big)\, d\nu(t_1) \otimes d\nu(t_2) \longrightarrow \sT_{\Trop_0(V)} \wedge \sT_{\Trop_0(W)},
    $$
    as $m \to \infty$. Here, the $d\nu$ is the normalised Lebesgue measures on $U_{\epsilon}((S^1)^d)$. 
\end{itheorem}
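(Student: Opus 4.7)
The plan is to use Fubini together with the étale property of $\Phi_m$ to factor the integral as a wedge of two smooth averaged pullbacks, to identify the limit of each factor via dynamical tropicalisation, and then to pass to the limit under the wedge with the help of Theorem~\ref{ithm:trop-curr-CSP} and Proposition~\ref{iprop:slicing-CSP}.

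First, since $\Phi_m\colon (\C^*)^d \lto (\C^*)^d$ is an étale Galois cover of degree $m^d$, hence a local biholomorphism, for generic $(t_1,t_2)$ the translates $t_1 V$ and $t_2 W$ intersect in the expected codimension and
\begin{equation*}
\Phi_m^*\bigl([t_1 V] \wedge [t_2 W]\bigr) \;=\; \Phi_m^*[t_1 V] \wedge \Phi_m^*[t_2 W].
\end{equation*}
Convolving an integration current with a $2d$-dimensional Lebesgue-class measure produces a smooth positive closed form, so Fubini, combined with the independence of $t_1$ and $t_2$, rewrites the integral in the statement as $A_m \wedge B_m$, where
\begin{equation*}
A_m := \frac{1}{m^{d-p}}\int_{U_{\epsilon}((S^1)^d)} \Phi_m^*[t_1 V]\, d\nu(t_1), \qquad B_m := \frac{1}{m^{d-q}}\int_{U_{\epsilon}((S^1)^d)} \Phi_m^*[t_2 W]\, d\nu(t_2),
\end{equation*}
with $n=d$ and the two exponents summing to the required $2d-(p+q)$.

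Next, I would invoke dynamical tropicalisation in the form of \cite{Dyn-trop} (the Lebesgue-averaged version follows from the Haar-averaged one by a Fubini decomposition in the radial direction) to obtain the convergences $A_m \lto \sT_{\Trop_0(V)}$ and $B_m \lto \sT_{\Trop_0(W)}$ in $(\C^*)^d$. To handle the wedge of these limits I would compactify on a smooth projective toric variety $X_{\Sigma}$ whose fan is compatible with $\Trop_0(V)+\Trop_0(W)$. Theorem~\ref{ithm:trop-curr-CSP} then ensures that the extensions $\overline{\sT}_{\Trop_0(V)}$ and $\overline{\sT}_{\Trop_0(W)}$ have continuous superpotentials, while the smooth forms $\overline{A}_m$ and $\overline{B}_m$ trivially do. Proposition~\ref{iprop:slicing-CSP} then delivers the one-sided convergences
\begin{equation*}
\overline{A}_m \wedge \overline{\sT}_{\Trop_0(W)} \lto \overline{\sT}_{\Trop_0(V)} \wedge \overline{\sT}_{\Trop_0(W)} \quad \text{and} \quad \overline{\sT}_{\Trop_0(V)} \wedge \overline{B}_m \lto \overline{\sT}_{\Trop_0(V)} \wedge \overline{\sT}_{\Trop_0(W)}.
\end{equation*}

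The main obstacle is upgrading these one-sided convergences to the simultaneous convergence $\overline{A}_m \wedge \overline{B}_m \lto \overline{\sT}_{\Trop_0(V)} \wedge \overline{\sT}_{\Trop_0(W)}$, since Proposition~\ref{iprop:slicing-CSP} cannot directly swap two moving factors at once. I plan to resolve this via a diagonal argument at the level of superpotentials: the continuity of the superpotentials of the limiting tropical currents, together with standard approximation results from \cite{Dinh-Sibony:Kahler}, forces the superpotentials of $\overline{A}_m$ (respectively $\overline{B}_m$) to converge uniformly to that of $\overline{\sT}_{\Trop_0(V)}$ (respectively $\overline{\sT}_{\Trop_0(W)}$), which is exactly the uniformity needed to let both factors tend to their limits simultaneously. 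Restricting the resulting identity from $X_{\Sigma}$ to the open orbit $T_N \simeq (\C^*)^d$ then yields the asserted convergence.
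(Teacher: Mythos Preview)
Your factorisation into $A_m \wedge B_m$ and the weak convergences $A_m \to \sT_{\Trop_0(V)}$, $B_m \to \sT_{\Trop_0(W)}$ are correct, but the last step contains a genuine gap. You assert that the continuity of the limiting superpotential, together with the approximation results of \cite{Dinh-Sibony:Kahler}, \emph{forces} $\overline{A}_m$ to converge SP-uniformly to $\overline{\sT}_{\Trop_0(V)}$. That inference is invalid: those results guarantee that a current with continuous superpotential admits \emph{some} SP-uniform smooth approximation, not that \emph{every} weakly convergent sequence of smooth forms is automatically SP-uniform. Concretely, after the substitution $t_1\mapsto t_1^{1/m}$ your $A_m$ is the convolution of $\tfrac{1}{m^{d-p}}\Phi_m^*[V]$ with a kernel whose support shrinks to the identity at rate $1/m$ and whose density blows up; there is no uniform control on the superpotentials of such a sequence, and hence no way to iterate Proposition~\ref{iprop:slicing-CSP} to let both factors move at once.

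The paper sidesteps this by passing to the diagonal: one writes $\Phi_m^*[t_1 V]\wedge\Phi_m^*[t_2 W]=\pi_*\big((\Phi_m^*[V]\otimes\Phi_m^*[W])\wedge (t_1^{-1/m},t_2^{-1/m})[\Delta]\big)$ and transfers the $(t_1,t_2)$-average entirely onto the diagonal factor. Exploiting the $\mu_m^{2d}$-invariance of $\Phi_m^*[V]\otimes\Phi_m^*[W]$, the averaged translated diagonal is identified with an integral of radially rescaled tropical currents $\sT_\Delta^{R_m}$. These \emph{do} converge SP-uniformly to $\sT_\Delta$, because they are small translates of a fixed tropical current with continuous superpotential. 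Meanwhile the tensor product converges only weakly, by Theorem~\ref{thm:tensor-conv}. This asymmetry --- one SP-uniform factor against one merely weakly convergent factor --- is precisely what the superpotential formalism (Proposition~\ref{prop:slicing-CSP} and formula~\eqref{eq:sp-formula}) can absorb. In your decomposition neither $A_m$ nor $B_m$ is shown to be SP-uniform, and that is the missing idea.
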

To this end, we hope that the computational aspects of tropical geometry, together with our results, will provide a testing ground for complex dynamics and superpotential theory. Furthermore, our results highlight the need to generalise our main theorems to a broader dynamical setting, which we leave for future work.

\subsection*{Acknowledgements}
This project has received funding from the National University of Singapore and the Ministry of Education (MOE) of Singapore through grant A-8002488-00-00. FB is also thankful for the support of the London Mathematical Society’s Research in Pairs programme (ref. 42239), which helped cover part of his travel expenses to Singapore in 2023.

\section{Tools from Superpotential and Slicing Theory of Currents}

Let \((X, \omega)\) be a compact K\"ahler manifold of dimension \(d\). Assume that \(\mathcal{S}\) is either a positive or a negative current of bidegree \((q, q)\) on \(X\). The quantity 
\[
| \langle \mathscr{S}, \omega^{d-q} \rangle |
\]
is referred to as the \emph{total mass} of \(\mathscr{S}\). For \(0 \leq r \leq d\), we consider the de~Rham cohomology groups \(H^r(X, \mathbb{C}) = H^r(X, \mathbb{R}) \otimes_{\mathbb{R}} \mathbb{C}\) with coefficients in \(\mathbb{C}\). Recall that Hodge theory provides the following decomposition of the de~Rham cohomology group into Dolbeault cohomology groups:
\[
H^r(X, \mathbb{C}) \simeq \bigoplus_{p+q=r} H^{p,q}(X, \mathbb{C}).
\]
We denote by \(\mathscr{C}^q(X)\) the cone of positive closed bidegree \((q,q)\)-currents or bidimension \((d-q, d-q)\) in \(X\). We denote by \(\mathscr{D}^q(X) = \mathscr{D}_{d-q}(X)\) the \(\mathbb{R}\)-vector space spanned by \(\mathscr{C}^q(X)\), which is the space of closed real currents of bidegree \((q, q)\). Every current \(\mathscr{T} \in \mathscr{D}^q(X)\) has a cohomology class:
\[
\{\mathscr{T}\} \in H^{q,q}(X, \mathbb{R}) = H^{q,q}(X, \mathbb{C}) \cap H^{2q}(X, \mathbb{R}).
\]
We define \(\mathscr{D}^{q,0}(X) = \mathscr{D}_{d-q}^0(X)\) to be the subspace of \(\mathscr{D}^q(X)\), consisting of currents with vanishing cohomology. The \(*\)-topology on \(\mathscr{D}^q(X)\) is defined by the norm:
\[
\|\mathscr{S}\|_* := \min(\|\mathscr{S}^+\| + \|\mathscr{S}^-\|),
\]
where the minimum is taken over positive currents \(\mathscr{S}^+\) and \(\mathscr{S}^-\) in \(\mathscr{C}^q(X)\) that satisfy \(\mathscr{S} = \mathscr{S}^+ - \mathscr{S}^-\). We say that \(\mathscr{S}_n\) converges to \(\mathscr{S}\) in \(\mathscr{D}^q(X)\) if \(\mathscr{S}_n\) converges weakly to \(\mathscr{S}\) and moreover, \(\|\mathscr{S}_n\|_{*}\) is bounded by a constant independent of \(n\).

\vskip 2mm
Let $h:= \dim H^{q,q}(X, \R),$ and fix a set of smooth forms $\alpha = (\alpha_1, \dots , \alpha_{h})$  such that their cohomology classes $\{ \alpha \}= (\{\alpha_1 \}, \dots , \{\alpha_{h}\})$ form a basis for $H^{q,q}(X, \R).$ By Poincar\'e duality, there exists a set of smooth forms $\alpha^{\vee} = ( \alpha_1^{\vee} , \dots , \alpha_{h}^{\vee})$ such that their cohomology classes $\{ \alpha^{\vee}\}$ form the dual basis of $\{\alpha\},$ with respect to the cup-product. By adding $U_{\sS}$ to a suitable combination of $\alpha_i^{\vee},$ we can assume that $\langle U_{\sS} , \alpha_i \rangle =0,$ for all $i = 1, \dots, h.$ In this case, we say that $U_{\sS}$ is \emph{$\alpha$-normalised}. 
\begin{definition}\label{def:CSP}
Let $\sT \in \sD^{q}(X)$ and $\sS$ be a smooth form in $\sD^{d-q+1,0}(X).$ 
\begin{itemize}
    \item [(a)] The \emph{$\alpha$-normalised superpotential}  $\mathscr{U}_{\sT}$ of $\sT$ is given by the function 
\begin{align*}
    \sU_{\sT}  :\{ \sS \in \sD^{d-q+1,0}(X): \text{smooth} \} & \lto \R  \\ 
     \sS &\lmto \langle \sT , U_{\sS} \rangle,
\end{align*}
where $U_{\sS}$ is the $\alpha$-normalised potential of $\sS.$
    \item [(b)] We say $\sT$ has a \emph{continuous superpotential}, if $\sU_{\sT}$ can be extended to a function on $\sD^{d-q+1, 0}(X)$ which is continuous with respect to the $*$-topology. 
\end{itemize}

\end{definition}

In general, consider $\sT \in \sD^q(X)$ and $\sS \in \sD^r(X).$ Assume that $q+r \leq d$ and $\sT$ has a continuous superpotential. Let $\sU_{\sT}$ be the $\alpha$-normalised superpotential of $\sT.$ Let $\beta \in \text{Span}_{\R}\{\alpha\}$ such that $ \{\beta \} = \{\sT \}.$ For any compactly supported smooth form $\varphi$ of bidegree $(d-q-r, d-q-r),$ we define 
\begin{equation}\label{eq:sp-formula}
    \langle \sT \wedge \sS , \varphi \rangle := \sU_{\sT}(  \sS \wedge dd^c \varphi) + \langle \beta \wedge \sS , \varphi  \rangle. 
\end{equation}
Now assume that if $f: X \lto Y,$ is a biholomorphism  between smooth compact K\"ahler manifolds, then we have
$$
 f_* \sU_{\sR_1} = \sU_{f_* \sR_1}, \quad   f^* \sU_{\sR_2} = \sU_{f^* \sR_2},
$$
for $\sR_1 \in \sD^{q}(X)$ and $\sR_2 \in \sD^{q}(Y).$


\begin{definition}
    Let $(\sT_n)$ be a sequence of currents in $\sD^q(X)$ weakly converging to $\sT.$ Let $\sU_{\sT}$ and $\sU_{\sT_n}$ be their $\alpha$-normalised superpotentials. If $\sU_{\sT_n}$ converges to $\sU_{\sT}$ uniformly on any $*$-bounded sets of smooth forms in $\sD^{d-q+1,0}(X),$ then the convergence is called \emph{$\textrm{SP}$-uniform}.
\end{definition}
It is shown in \cite{Dinh-Sibony:Kahler}*{Proposition 3.2.8} that any current with continuous superpotentials can be SP-uniformly approximated by smooth forms. Moreover, currents with continuous superpotentials have other nice properties:

\begin{theorem}[\cite{Dinh-Nguyen-Vu}*{Theorem 1.1}]\label{thm:superpot-ineq}
  Suppose that $\sT$ and $\sT'$ are two positive currents in $\sD_q(X)$, such that $\sT \leq \sT',$ \textit{i.e.}, $\sT' -\sT$ is a positive current. Then, if $\sT'$ has a continuous superpotential, then so does $\sT$.
\end{theorem}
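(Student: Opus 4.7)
The plan is to verify that $\sU_\sT$ extends continuously in the $*$-topology on $\sD^{q+1,0}(X)$, using the analogous property of $\sU_{\sT'}$ together with the pointwise domination $\sT \le \sT'$. By Proposition 3.2.8 of \cite{Dinh-Sibony:Kahler}, which is quoted just before the theorem, continuity of the superpotential is equivalent to exhibiting a smooth positive closed approximation $\sT_n \to \sT$ whose superpotentials $\sU_{\sT_n}$ converge uniformly on every $*$-bounded family $\mathcal{F} \subset \sD^{q+1,0}(X)$ of smooth test forms. A corresponding SP-uniform sequence $\sT'_n \to \sT'$ exists by hypothesis.

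I would first regularise simultaneously. Applying the Dinh--Sibony smoothing, which is given by convolution against a family of positive closed kernels supported near the diagonal of $X \times X$, to $\sT$, $\sT'$, and $\sR := \sT' - \sT$ produces smooth positive closed forms $\sT_n$, $\sT'_n$, $\sR_n$ with $\sT'_n = \sT_n + \sR_n$ and pointwise inequalities $0 \le \sT_n,\,\sR_n \le \sT'_n$. Linearity of $\sS \mapsto U_\sS$ yields $\sU_{\sT_n} = \sU_{\sT'_n} - \sU_{\sR_n}$, so SP-uniform convergence of $\sT_n$ is equivalent to SP-uniform convergence of $\sR_n$, and by the symmetry between $\sT$ and $\sR$ it suffices to establish either one.

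The key quantitative estimate to establish is
\[
\sup_{\sS \in \mathcal{F}}\,\bigl|\sU_{\sT_n}(\sS) - \sU_{\sT_m}(\sS)\bigr| \longrightarrow 0 \quad \text{as } n, m \to \infty.
\]
The main obstacle is that the normalised potential $U_\sS$ is a signed $(q,q)$-form, so the pointwise bound $\sT_n \le \sT'_n$ does not immediately translate into a comparison $|\sU_{\sT_n}| \le |\sU_{\sT'_n}|$ or into a control on Cauchy differences. I would resolve this by decomposing $U_\sS = U_\sS^+ - U_\sS^-$ into positive smooth $(q,q)$-forms whose masses are bounded uniformly over $\mathcal{F}$, using a Hodge-type decomposition relative to the fixed basis $\alpha$ of the normalisation, and then invoking a Cauchy--Schwarz-type inequality for higher-bidegree positive currents from \cite{Dinh-Sibony:Kahler}. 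Such an inequality allows one to control $\bigl|\langle \sT_n - \sT_m,\, U_\sS^\pm \rangle\bigr|^2$ by the product of a uniformly bounded mass term $\langle \sT'_n + \sT'_m,\, \Theta \rangle$ and a superpotential difference $\sup_{\psi \in \mathcal{F}'}\bigl|\sU_{\sT'_n}(\psi) - \sU_{\sT'_m}(\psi)\bigr|$ over an auxiliary $*$-bounded family $\mathcal{F}' \subset \sD^{q+1,0}(X)$ constructed from $\mathcal{F}$. Since the masses stay bounded and the latter supremum tends to zero by the SP-uniform convergence of $\sT'_n$, SP-uniformity propagates to $\sT_n$, which gives $\sT$ a continuous superpotential.
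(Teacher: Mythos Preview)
The paper does not supply its own proof of this statement: it is quoted verbatim as Theorem~1.1 of \cite{Dinh-Nguyen-Vu} and used as a black box. So there is nothing in the paper to compare your argument against.

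Regarding your attempt, the preliminary reductions (simultaneous regularisation, the equivalence with SP-uniform approximability, the Cauchy reformulation) are reasonable, but the final paragraph contains a genuine gap. You appeal to ``a Cauchy--Schwarz-type inequality for higher-bidegree positive currents from \cite{Dinh-Sibony:Kahler}'' that would bound
\[
\bigl|\langle \sT_n - \sT_m,\, U_\sS^\pm \rangle\bigr|^2
\]
by the product of a mass term in $\sT'_n + \sT'_m$ and a superpotential oscillation of $\sT'$ over some auxiliary $*$-bounded family $\mathcal{F}'$. No such inequality is stated in \cite{Dinh-Sibony:Kahler}, and manufacturing one is essentially the whole difficulty that \cite{Dinh-Nguyen-Vu} is devoted to: in bidegree $(1,1)$ this type of domination is available because potentials are scalar quasi-plurisubharmonic functions and one has Chern--Levine--Nirenberg estimates, but in higher bidegree the potential $U_\sS$ is a signed $(d-q,d-q)$-form and there is no evident mechanism by which the pointwise inequality $0 \le \sT_n \le \sT'_n$ controls the pairing of the \emph{difference} $\sT_n - \sT_m$ against $U_\sS^\pm$. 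Relatedly, the decomposition $U_\sS = U_\sS^+ - U_\sS^-$ into positive smooth forms ``whose masses are bounded uniformly over $\mathcal{F}$'' is asserted without justification: a $*$-bound on $\sS$ controls the mass of $\sS$, not the sizes of a positive/negative splitting of its potential, and a ``Hodge-type decomposition relative to the basis $\alpha$'' does not obviously yield such a bound. As written, the crucial estimate is a placeholder for exactly the result you are trying to prove.
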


\begin{theorem}\label{thm:wedge-cont-pot}
 If $\sT_1$ and $\sT_2$ are two positive closed currents, and $\sT_1$ has a continuous superpotentials, then $\sT_1 \wedge \sT_2$ is well-defined. Moreover, if $\sT_2$ has also a continuous superpotential, then 
 \begin{itemize}
     \item [(a)] \cite{Dinh-Sibony:Kahler}*{Proposition 3.3.3} $\sT_1 \wedge \sT_2$ has a continuous superpotential; 
     \item [(b)] \cite{Dinh-Sibony:Kahler}*{Proposition 3.3.3} This wedge product is continuous with respect to the SP-uniform convergence. 
     \item [(c)] \cite{Dinh-Sibony:superpot}*{Theorem 4.2.4} $\supp{\sT_1 \wedge \sT_2} \subseteq \supp{\sT_1} \cap \supp{ \sT_2}.$
 \end{itemize}

\end{theorem}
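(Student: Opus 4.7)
The plan is to build the theorem on top of the defining formula \eqref{eq:sp-formula} and the SP-uniform approximation theorem of \cite{Dinh-Sibony:Kahler}*{Proposition 3.2.8}. The first step is to verify the well-definedness of $\sT_1\wedge\sT_2$ under the weaker hypothesis that only $\sT_1$ has a continuous superpotential. For a smooth test form $\varphi$ of bidegree $(d-q_1-q_2,d-q_1-q_2)$, the current $\sT_2\wedge dd^c\varphi$ lies in $\sD^{d-q_1+1,0}(X)$ with $*$-norm controlled by an appropriate $\mathcal{C}^k$-norm of $\varphi$. Hence the right-hand side of \eqref{eq:sp-formula}, namely $\sU_{\sT_1}(\sT_2\wedge dd^c\varphi)+\langle\beta_1\wedge\sT_2,\varphi\rangle$ with $\{\beta_1\}=\{\sT_1\}$, is defined and depends continuously on $\varphi$. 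This both produces the closed current $\sT_1\wedge\sT_2$ and gives the candidate formula for its superpotential.

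For parts (a) and (b), the plan is to approximate both factors by SP-uniformly convergent sequences of smooth positive closed forms $\sT_1^{(n)}\to\sT_1$ and $\sT_2^{(n)}\to\sT_2$, furnished by the cited proposition. Each $\sT_1^{(n)}\wedge\sT_2^{(n)}$ is smooth and so has a continuous superpotential, which via \eqref{eq:sp-formula} is expressible using $\sU_{\sT_1^{(n)}}$ paired with $\sT_2^{(n)}\wedge dd^c\varphi$. The key estimate is that the difference $\sU_{\sT_1^{(n)}\wedge\sT_2^{(n)}}-\sU_{\sT_1\wedge\sT_2}$ is uniformly small on $*$-bounded families of test forms. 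A telescoping argument splits this difference into two pieces: one controlled by SP-uniformity of $\sT_1^{(n)}\to\sT_1$ (applied to $\sT_2^{(n)}\wedge(\cdot)$, whose $*$-norms are uniformly bounded), and the other by SP-uniformity of $\sT_2^{(n)}\to\sT_2$ paired against the smooth function $\sU_{\sT_1}$. The combined estimate yields the continuous superpotential in (a) and the SP-uniform continuity of the product in (b).

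For (c), the strategy is localization. Given a test form $\varphi$ with $\supp\varphi$ disjoint from $\supp\sT_1\cap\supp\sT_2$, I would use a smooth partition of unity to reduce to the case where $\supp\varphi\cap\supp\sT_2=\emptyset$. One then constructs smooth positive closed approximations $\sT_2^{(n)}\to\sT_2$ that are simultaneously SP-uniform and have supports contracting onto $\supp\sT_2$, obtained by convolution in local charts patched together with a cohomology-preserving global correction. Since $\sT_2^{(n)}\wedge dd^c\varphi$ then vanishes identically for $n$ large, formula \eqref{eq:sp-formula} forces $\langle\sT_1\wedge\sT_2,\varphi\rangle=0$. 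The symmetric case $\supp\varphi\cap\supp\sT_1=\emptyset$ follows from the symmetry of the wedge product when both factors have continuous superpotentials, which itself is a consequence of (a) and (b) together with smooth approximation.

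The main obstacle is the construction in (c) of regularizations that are simultaneously SP-uniform and support-concentrating: generic SP-uniform approximations supplied by the abstract theory do not automatically enjoy the second property, so one must fuse a local convolution scheme with a global cohomological correction while verifying that the correction does not spoil either feature. Once this technical point is handled, all three assertions follow cleanly from \eqref{eq:sp-formula} and the SP-uniform telescoping estimate above.
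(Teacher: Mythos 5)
First, note that the paper does not prove this theorem: it is stated as a direct citation of \cite{Dinh-Sibony:Kahler}*{Proposition 3.3.3} and \cite{Dinh-Sibony:superpot}*{Theorem 4.2.4}, so there is no internal proof to compare against. Judged on its own merits, your route for well-definedness and for (a)--(b) is the standard superpotential argument and is essentially sound: $\sT_2\wedge dd^c\varphi$ is $dd^c$-exact, lies in $\sD^{d-q_1+1,0}(X)$, and has $*$-norm controlled by $\|\varphi\|_{\mathcal C^2}$ and the mass of $\sT_2$, so \eqref{eq:sp-formula} defines the product; the telescoping then gives (a) and (b). One imprecision there: the second telescoping term $\sU_{\sT_1}\bigl((\sT_2^{(n)}-\sT_2)\wedge dd^c\varphi\bigr)$ is not controlled by ``pairing against the smooth function $\sU_{\sT_1}$'' --- $\sU_{\sT_1}$ is a functional on currents, not a form --- but by the symmetry relation $\sU_{\sT}(\sS)=\sU_{\sS}(\sT)$ for normalised superpotentials, which is the actual mechanism in Dinh--Sibony and should be invoked explicitly.

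The genuine gap is in (c). You correctly reduce, via a partition of unity subordinate to the open cover $(X\setminus\supp{\sT_1})\cup(X\setminus\supp{\sT_2})$ of $\supp\varphi$, to the case $\supp\varphi\cap\supp{\sT_2}=\varnothing$ (plus the symmetric case via commutativity). But the step you then lean on --- smooth positive closed approximations $\sT_2^{(n)}\to\sT_2$ that are simultaneously SP-uniform and support-concentrating --- is not available on a general compact K\"ahler manifold: the regularisation of \cite{Dinh-Sibony-Reg} produces differences $\sT_n^+-c_n\omega^{q}$ precisely because a positive closed smooth approximation with shrinking support need not exist, and no ``global cohomological correction'' will restore both properties at once. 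You flag this as the main obstacle but do not resolve it, so as written the proof of (c) is incomplete. Fortunately the detour is unnecessary: if $\supp\varphi\cap\supp{\sT_2}=\varnothing$, then $dd^c\varphi$ is a smooth form supported in $\supp\varphi$, hence $\sT_2\wedge dd^c\varphi=0$ identically as a current and $\langle\beta\wedge\sT_2,\varphi\rangle=0$, so \eqref{eq:sp-formula} gives $\langle\sT_1\wedge\sT_2,\varphi\rangle=0$ with no approximation of $\sT_2$ at all. Replacing your regularisation step by this direct vanishing closes the argument.
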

\begin{theorem}[\cite{Dinh-Sibony:Kahler}*{Proposition 3.3.4}]\label{thm:DS-commut-asso}
    Assume that $\sT_1, \sT_2$ and $\sT_3$ are closed positive currents, and $\sT_1$ and $\sT_2$ have continuous superpotentials. Then, 
$$
\sT_1 \wedge \sT_2 = \sT_2 \wedge \sT_1 \quad \text{and} \quad (\sT_1 \wedge \sT_2) \wedge \sT_3 = \sT_1 \wedge (\sT_2 \wedge \sT_3). 
$$
\end{theorem}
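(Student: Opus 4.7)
The plan is to reduce both identities to the smooth case via the SP-uniform approximation supplied by Proposition~3.2.8 of \cite{Dinh-Sibony:Kahler}. Since $\sT_1$ and $\sT_2$ have continuous superpotentials, select sequences of smooth closed forms $\sT_1^{(n)} \to \sT_1$ and $\sT_2^{(n)} \to \sT_2$ converging SP-uniformly. On smooth closed forms the wedge product coincides with the ordinary one, and is therefore symmetric and associative, so the task reduces to justifying a passage to the limit.

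For commutativity, observe that when the second factor is smooth the superpotential formula~\eqref{eq:sp-formula} collapses to the classical action of $\sT_1$ on smooth test forms, so $\sT_1 \wedge \sT_2^{(n)} = \sT_2^{(n)} \wedge \sT_1$ for every $n$. Applying Theorem~\ref{thm:wedge-cont-pot}(b) to the SP-uniform approximation $\sT_2^{(n)} \to \sT_2$ with the fixed current $\sT_1$ of continuous superpotential gives $\sT_1 \wedge \sT_2^{(n)} \to \sT_1 \wedge \sT_2$; exchanging the roles of $\sT_1$ and $\sT_2$ (and using that $\sT_2$ also has continuous superpotential) gives $\sT_2^{(n)} \wedge \sT_1 \to \sT_2 \wedge \sT_1$. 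The two limits must coincide.

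For associativity, Theorem~\ref{thm:wedge-cont-pot}(a) ensures that $\sT_1 \wedge \sT_2$ itself has a continuous superpotential, so $(\sT_1 \wedge \sT_2) \wedge \sT_3$ is well-defined. Iterating Theorem~\ref{thm:wedge-cont-pot}(b), with a diagonal extraction if needed, shows that the smooth forms $\sT_1^{(n)} \wedge \sT_2^{(n)}$ approximate $\sT_1 \wedge \sT_2$ SP-uniformly. Since all three of $\sT_1^{(n)}$, $\sT_2^{(n)}$, $\sT_1^{(n)} \wedge \sT_2^{(n)}$ are smooth, the identity
$$
(\sT_1^{(n)} \wedge \sT_2^{(n)}) \wedge \sT_3 = \sT_1^{(n)} \wedge (\sT_2^{(n)} \wedge \sT_3)
$$
is tautological. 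On the left, pairing against a test form $\varphi$ and invoking~\eqref{eq:sp-formula}, the uniform convergence of the normalised superpotentials on $*$-bounded sets of smooth forms (together with the convergence of the cohomological representatives) yields the limit $(\sT_1 \wedge \sT_2) \wedge \sT_3$. On the right, one first applies Theorem~\ref{thm:wedge-cont-pot}(b) to obtain $\sT_2^{(n)} \wedge \sT_3 \to \sT_2 \wedge \sT_3$, and then repeats the superpotential-formula argument against the SP-uniformly convergent $\sT_1^{(n)} \to \sT_1$ to obtain $\sT_1 \wedge (\sT_2 \wedge \sT_3)$.

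The principal obstacle lies in the joint SP-uniform approximation used in the associativity step: one must verify that $\sT_1^{(n)} \wedge \sT_2^{(n)}$ genuinely SP-uniformly approximates $\sT_1 \wedge \sT_2$, and, more delicately, that wedging an SP-uniformly convergent smooth sequence with the third current $\sT_3$ (which carries no continuous-superpotential assumption) preserves the convergence. Both facts are extracted by reading~\eqref{eq:sp-formula} directly and exploiting the uniform convergence of the normalised superpotentials, but this tracking is the heart of the argument.
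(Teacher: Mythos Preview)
The paper does not contain a proof of this statement: it is quoted as Proposition~3.3.4 of \cite{Dinh-Sibony:Kahler} and used as a black box, so there is no ``paper's own proof'' to compare against.

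On the substance of your sketch: the overall strategy (reduce to smooth forms via SP-uniform approximation and pass to the limit through formula~\eqref{eq:sp-formula}) is indeed the one used in \cite{Dinh-Sibony:Kahler}. Two points deserve care. First, your invocation of Theorem~\ref{thm:wedge-cont-pot}(b) for $\sT_2^{(n)}\wedge\sT_3\to\sT_2\wedge\sT_3$ is a mis-citation, since part~(b) is stated under the hypothesis that \emph{both} factors have continuous superpotentials; you should instead argue directly from~\eqref{eq:sp-formula} using that $\sU_{\sT_2^{(n)}}\to\sU_{\sT_2}$ uniformly on $*$-bounded sets and that $\sT_3\wedge dd^c\varphi$ is a fixed current. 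Second, the step you flag as the ``principal obstacle'' is genuine: to pass to the limit in $\sU_{\sT_1^{(n)}}\big((\sT_2^{(n)}\wedge\sT_3)\wedge dd^c\varphi\big)$ you need uniform convergence of the extended superpotentials on $*$-bounded sets of \emph{currents}, not just smooth forms, together with a uniform $*$-bound on $(\sT_2^{(n)}\wedge\sT_3)\wedge dd^c\varphi$. Both facts hold, but they require the density/regularisation arguments carried out in \cite{Dinh-Sibony:Kahler} and are not immediate from the statements recorded in this paper.
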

\begin{proposition}\label{prop:slicing-CSP}
   Let $X$ be a compact K\"ahler manifold, $S_n \lto S$ be a convergent sequence in $\sD^q(X).$ If a current $\sT$ has a continuous superpotential, then 
    $$
    \sT \wedge \sS_n \lto \sT \wedge \sS. 
    $$
\end{proposition}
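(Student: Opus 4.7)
The plan is to test the currents against an arbitrary smooth compactly supported form $\varphi$ of complementary bidegree and apply the defining formula \eqref{eq:sp-formula}. Writing $\sT \in \sD^p(X)$ and choosing a smooth $\beta \in \mathrm{Span}_\R\{\alpha\}$ with $\{\beta\} = \{\sT\}$, the formula gives
$$
\langle \sT \wedge \sS_n, \varphi\rangle \;=\; \sU_{\sT}\bigl(\sS_n \wedge dd^c\varphi\bigr) + \langle \beta \wedge \sS_n, \varphi\rangle,
$$
and an analogous identity for $\sS$. It therefore suffices to show that each of these two terms converges as $n \to \infty$.

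The cohomological term is straightforward: since $\beta \wedge \varphi$ is a smooth test form, the weak convergence $\sS_n \to \sS$ yields $\langle \beta \wedge \sS_n, \varphi\rangle = \langle \sS_n, \beta \wedge \varphi\rangle \to \langle \sS, \beta \wedge \varphi\rangle$. For the superpotential term, I would first observe that $\sS_n \wedge dd^c\varphi$ lies in $\sD^{d-p+1, 0}(X)$: as $\sS_n$ is closed, $\sS_n \wedge dd^c\varphi = dd^c(\sS_n \wedge \varphi)$ is $dd^c$-exact, hence cohomologically trivial, so it belongs to the domain on which $\sU_{\sT}$ is defined. It then remains to show that $\sS_n \wedge dd^c\varphi \to \sS \wedge dd^c\varphi$ in the $*$-topology, after which the continuity of $\sU_{\sT}$ on $*$-bounded sets delivers the desired convergence of the first term.

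The main obstacle is the verification of this $*$-topology convergence, which has two components. Weak convergence is automatic, since for any smooth test form $\psi$ one has $\langle \sS_n \wedge dd^c\varphi, \psi\rangle = \langle \sS_n, dd^c\varphi \wedge \psi\rangle \to \langle \sS, dd^c\varphi \wedge \psi\rangle$. The delicate point is the uniform bound on $\|\sS_n \wedge dd^c\varphi\|_*$. For this I would write $dd^c\varphi$ as a difference of smooth positive closed forms by choosing $C > 0$ large enough that $\gamma^+ := dd^c\varphi + C\omega^{d-p-q+1} \geq 0$; since $\omega$ is closed and $dd^c\varphi$ is exact, $\gamma^+$ is closed, and then $dd^c\varphi = \gamma^+ - \gamma^-$ with $\gamma^- := C\omega^{d-p-q+1}$ also smooth, positive and closed. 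Writing the Jordan-type decomposition $\sS_n = \sS_n^+ - \sS_n^-$ with $\|\sS_n^+\| + \|\sS_n^-\|$ uniformly bounded by hypothesis, the four positive closed currents $\sS_n^{\pm} \wedge \gamma^{\pm}$ provide a decomposition of $\sS_n \wedge dd^c\varphi$ whose total mass is controlled by $\|\sS_n\|_* \cdot \|dd^c\varphi\|_{\infty,\omega}$, giving the required uniform $*$-norm bound. Combining the two convergences completes the proof.
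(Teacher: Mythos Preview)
Your proof is correct and follows the same scheme as the paper's: apply the defining formula \eqref{eq:sp-formula} and check convergence of each term, using the continuity of $\sU_{\sT}$ on $*$-bounded sets for the superpotential term. The paper's proof is extremely terse and invokes the Dinh--Sibony regularization theorem to ensure that the non-smooth currents involved can be approached by smooth positive closed forms with mass control; you instead bypass this by directly decomposing $dd^c\varphi = \gamma^+ - \gamma^-$ into smooth closed positive pieces, which gives the uniform $*$-norm bound on $\sS_n\wedge dd^c\varphi$ in an elementary way. Both routes land on the same application of the continuity hypothesis, and your version has the advantage of making the $*$-boundedness step completely explicit.
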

\begin{proof}
    The main result of \cite{Dinh-Sibony-Reg} implies any current $\sT \in \sD^p(X)$ can be weakly approximated by a difference of smooth closed positive of bidegree $(p,p)$-forms. The result then follows from the definition of continuity of superpotentials and Equation (\ref{eq:sp-formula}). 
\end{proof}

\begin{lemma}\label{lem:local-equality}
    Let $\sT,$ $\sT'$ be positive closed currents such that $\sT_{|_\Omega} = \sT'_{|_\Omega}$ in an open subset $\Omega \subseteq X,$ and both $\sT$ and $\sT'$ have continuous superpotentials. Then, for any $\sS \in \sD^{r}(X),$ 
    $$
     \big(\sT\wedge \sS\big) _{|_\Omega} =   \big(\sT' \wedge \sS \big) _{|_\Omega}.
    $$
\end{lemma}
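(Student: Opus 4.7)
The plan is to subtract $\sT'$ from $\sT$ and reduce, by smooth approximation of $\sS$, to the case where $\sS$ itself is smooth, in which case the wedge product is local essentially by construction.

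First I would set $\sR := \sT - \sT'$. Since both $\sT$ and $\sT'$ have continuous superpotentials, so does $\sR$ (with $\sU_{\sR} = \sU_{\sT} - \sU_{\sT'}$ and cohomology representative $\beta_{\sR} = \beta_{\sT} - \beta_{\sT'}$), so formula (\ref{eq:sp-formula}) defines $\sR \wedge \sS$; by linearity $\sR \wedge \sS = \sT \wedge \sS - \sT' \wedge \sS$. The lemma thus reduces to showing $(\sR \wedge \sS)|_{\Omega} = 0$ given that $\sR|_{\Omega} = 0$.

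Next, following the proof of Proposition \ref{prop:slicing-CSP}, I would pick a sequence $(\sS_n)$ of smooth closed forms --- each a difference of smooth positive closed forms --- with $\sS_n \to \sS$ in the $*$-topology, using the regularisation of \cite{Dinh-Sibony-Reg}. The argument in Proposition \ref{prop:slicing-CSP} rests only on formula (\ref{eq:sp-formula}) together with the definition of continuous superpotentials and is linear in the first slot, so it applies verbatim to $\sR$ and yields $\sR \wedge \sS_n \to \sR \wedge \sS$ weakly. Since restriction to an open set is weakly continuous, it suffices to show $(\sR \wedge \sS_n)|_{\Omega} = 0$ for each $n$.

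The only genuine step, and the likely main obstacle, is to check that when $\sS$ is smooth the wedge product defined by (\ref{eq:sp-formula}) coincides with the standard current--form pairing $\langle \sR \wedge \sS, \varphi \rangle = \langle \sR, \sS \wedge \varphi \rangle$. I would verify this by approximating $\sR$ itself SP-uniformly by smooth closed forms via \cite{Dinh-Sibony:Kahler}*{Proposition 3.2.8}: in the smooth--smooth case the identity reduces to a one-line integration by parts, using that $\sS \wedge dd^c \varphi$ is $d$-exact and that its potential is $\alpha$-normalised so the $\beta$-term pairs to zero; both sides then pass to the limit, the right-hand side by weak convergence tested against the smooth form $\sS \wedge \varphi$ and the left-hand side by SP-uniform convergence of the superpotential. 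Once this identification is in hand the conclusion is immediate: for any test form $\varphi$ supported in $\Omega$ the form $\sS_n \wedge \varphi$ is supported in $\Omega$, hence $\langle \sR, \sS_n \wedge \varphi \rangle = 0$ by the vanishing of $\sR$ on $\Omega$, and passing to the limit $n \to \infty$ gives $(\sR \wedge \sS)|_{\Omega} = 0$.
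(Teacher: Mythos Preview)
Your proof is correct but takes a different route from the paper's. The paper regularises the \emph{first} factor: it invokes the specific one-parameter regularisation $\sT_\theta$ of \cite{Dinh-Sibony:Kahler}, which is local in the sense that if $\sT=\sT'$ on $\Omega$ then the smoothings $\sT_n,\sT'_n$ agree on any $V_\epsilon\Subset\Omega$; once both factors of the wedge are smooth on $V_\epsilon$ the equality is immediate, and Theorem~\ref{thm:wedge-cont-pot}(b) passes it to the SP-uniform limit. You instead regularise the \emph{second} factor: subtracting to get $\sR=\sT-\sT'$ with $\sR|_\Omega=0$, you approximate $\sS$ by smooth $\sS_n$ and use Proposition~\ref{prop:slicing-CSP} to reduce to smooth $\sS$, where you then check that the superpotential wedge agrees with the ordinary pairing $\langle \sR,\sS\wedge\varphi\rangle$ (this last verification is where your SP-approximation of $\sR$ enters). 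Your approach avoids needing the locality of the Dinh--Sibony regularisation and works straight from formula~(\ref{eq:sp-formula}); the price is the extra bookkeeping in Step~3, whose sketch is slightly garbled (the point is that $\sS\wedge\varphi$ is \emph{a} potential of $\sS\wedge dd^c\varphi$, differing from the $\alpha$-normalised one by a combination of the $\alpha_i^\vee$, and the resulting correction pairs against $\sR$ only through its cohomology class, matching the $\beta$-term) but whose logic is sound.
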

\begin{proof}
In \cite{Dinh-Sibony:Kahler}, for any current $\sS$ with continuous superpotential, a family $\{\sT_{\theta}\}_{\theta \in \C^*}$ is constructed that $\sT_{\theta}$ converges SP-uniformly to $\sS$ as $|\theta| \to 0.$  Let $\epsilon :=|\theta|>0,$ be a small positive number, and $V\subseteq U$ be any open set such that $V_{\epsilon}$, the $\epsilon$-neighbourhood of $V$, is contained entirely in $\Omega.$ Therefore, by the hypothesis of the lemma, we can construct families of smooth forms $\sT_n$ and $\sT_n'$ converging $\textrm{SP}$-uniformly to $\sT$ and $\sT'$ respectively. Moreover,  
$${\sT_n}_{|_{V_{\epsilon}}} = {\sT'_n}_{|_{V_{\epsilon}}}.$$ 
Now, for any $(d-q-r, d-q-r)$ smooth form $\varphi$ with compact support on $\Omega,$ we can cover the support of $\varphi$ with an open set of the form $V_{\epsilon}$ and deduce
$$
     \big(\sT_n \wedge \sS \big)\wedge \varphi =   \big(\sT_n' \wedge \sS \big) \wedge \varphi.
$$
This, together with Theorem \ref{thm:wedge-cont-pot}(b) implies the assertion.

\end{proof}
\begin{theorem}\label{thm:CS-blowup}
  Let $q: \widehat{X} \lto X,$ be the blowing up of the compact K\"ahler manifold $X$ along a submanifold. Assume that $\sT \in \sD_p(\widehat{X})$ is such that the support of $\sT$ does not intersect the exceptional divisors of $\widehat{X}.$ If   the   current $\sT$ has a continuous superpotential then $q_* \sT$ has the same property. 
\end{theorem}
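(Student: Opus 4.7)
The plan is to derive the identity $\sU_{q_*\sT}(\sS) = \sU_\sT(q^*\sS)$ for smooth test forms $\sS$, and then upgrade it to the continuity of $\sU_{q_*\sT}$ on all of $\sD^{d-q+1,0}(X)$ by combining the hypothesis on $\sU_\sT$ with the continuity of $q^*$ in the $*$-topology. The underlying reason no correction term appears in the identity is that, because $\sT$ lies away from the exceptional divisor, one may choose the comparison basis so that all ``exceptional'' contributions pair to zero against $\sT$.

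First I set up compatible cohomology bases. Writing $E$ for the exceptional divisor of the blow-up of a smooth submanifold $Z \subseteq X$, the pullback $q^*: H^{q,q}(X,\R) \hookrightarrow H^{q,q}(\widehat X, \R)$ is injective, and the projection formula $\int_{\widehat X} q^*\gamma \wedge \beta = \int_X \gamma \wedge q_*\beta$ combined with the vanishing of $q_*$ on exceptional classes shows that the splitting $H^{q,q}(\widehat X, \R) = q^*H^{q,q}(X,\R) \oplus E^{q,q}$ is Poincar\'e-orthogonal, with an analogous splitting in complementary bidegree. Moreover, classes in the exceptional summand admit smooth representatives supported in any prescribed tubular neighbourhood of $E$, via a Thom form or Leray--Hirsch description of blow-up cohomology. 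I therefore extend $(q^*\alpha_1, \dots, q^*\alpha_h)$ to a basis $\widehat\alpha = (q^*\alpha_1, \dots, q^*\alpha_h, \widehat\alpha_{h+1}, \dots, \widehat\alpha_{\widehat h})$ of $H^{q,q}(\widehat X, \R)$ with the last entries in $E^{q,q}$, and choose the Poincar\'e dual basis $\widehat\alpha^\vee$ so that $\widehat\alpha_i^\vee$ for $i > h$ is a smooth form supported in a tubular neighbourhood $V$ of $E$ chosen small enough that $V \cap \supp{\sT} = \emptyset$.

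For a smooth form $\sS \in \sD^{d-q+1,0}(X)$ with smooth $\alpha$-normalised potential $U_\sS$, the form $q^*U_\sS$ is a smooth potential of $q^*\sS$. The projection formula $q_* q^*\alpha_i = \alpha_i$ combined with $\langle U_\sS, \alpha_i\rangle = 0$ gives $\langle q^*U_\sS, q^*\alpha_i\rangle = 0$ for every $i \leq h$, so the $\widehat\alpha$-normalised potential of $q^*\sS$ is
\[
\widehat U_{q^*\sS} \;=\; q^*U_\sS \;-\; \sum_{i>h} c_i(\sS)\, \widehat\alpha_i^\vee, \qquad c_i(\sS) := \langle q^*U_\sS, \widehat\alpha_i\rangle.
\]
Pairing with $\sT$ and using $\supp{\widehat\alpha_i^\vee} \cap \supp{\sT} = \emptyset$ for $i > h$ yields
\[
\sU_{q_*\sT}(\sS) \;=\; \langle q_*\sT, U_\sS\rangle \;=\; \langle \sT, q^*U_\sS\rangle \;=\; \langle \sT, \widehat U_{q^*\sS}\rangle \;=\; \sU_\sT(q^*\sS).
\]

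The last step is to upgrade this identity to continuity. The main ingredient, and the anticipated main obstacle, is the continuity of $q^*: \sD^{d-q+1,0}(X) \to \sD^{d-q+1,0}(\widehat X)$ in the $*$-topology. A uniform mass bound $\|q^*\sS\|_* \leq C\|\sS\|_*$ follows from positivity of $q^*$ on smooth positive forms combined with the projection formula $\int_{\widehat X} q^*\sS^{\pm} \wedge \widehat\omega^{q-1} = \int_X \sS^{\pm} \wedge q_*\widehat\omega^{q-1}$, with $C$ depending only on the cohomology class of $q_*\widehat\omega^{q-1}$. Weak convergence $q^*\sS_n \to q^*\sS$ against a smooth test form $\psi$ on $\widehat X$ reduces, via $\langle q^*\sS_n, \psi\rangle = \langle \sS_n, q_*\psi\rangle$, to controlling the pairing of $\sS_n$ with $q_*\psi$; here $q_*\psi$ is smooth on $X \setminus Z$ but only locally integrable across $Z$, and a regularisation of $q_*\psi$ combined with the uniform mass bound on $\sS_n$ delivers the convergence. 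Once $q^*$ is shown continuous, the identity $\sU_{q_*\sT} = \sU_\sT \circ q^*$ extends continuously to all of $\sD^{d-q+1,0}(X)$, proving that $q_*\sT$ has a continuous superpotential.
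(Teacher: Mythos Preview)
Your setup of the identity $\sU_{q_*\sT}(\sS)=\sU_\sT(q^*\sS)$ for smooth $\sS$ is correct and is essentially the paper's starting point as well (the paper subtracts a smooth representative $\alpha$ of $\{q_*\sT\}$ and works with the superpotential of $q_*\sT-\alpha$, but this amounts to the same identity). The problem is entirely in your last paragraph.

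The regularisation argument does not deliver weak continuity of $q^*$, and in fact $q^*:\sD^{r,0}(X)\to\sD^{r,0}(\widehat X)$ is \emph{not} continuous in the $*$-topology. Your estimate would have to control $\langle \sS_n, q_*\psi-\phi_\epsilon\rangle$ uniformly in $n$, but the mass norm only bounds pairings against the $C^0$-norm of the test form, and $q_*\psi-\phi_\epsilon$ does not tend to $0$ in $C^0$ since $q_*\psi$ is genuinely unbounded near the centre $Z$. Concretely, for a blow-up of a surface at a point $p$, take two bounded sequences of smooth probability measures $\mu_n,\nu_n\to\delta_p$ that concentrate along different directions; then $\sS_n=\mu_n-\nu_n\to 0$ in $\sD^{2,0}(X)$, yet $q^*\mu_n$ and $q^*\nu_n$ converge to different probability measures on $E$, so $q^*\sS_n\not\to 0$. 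This is exactly the bidegree $r=d$ relevant when $\sT$ has bidegree $(1,1)$.

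The paper circumvents this by never asserting continuity of $q^*$. It only uses the mass bound to say that for bounded smooth $\sS_n\to 0$, any cluster value $\sR$ of $q^*\sS_n$ is supported on $E$ and satisfies $q_*\sR=0$. The crucial extra choice is a potential $U$ of $q_*\sT-\alpha$ that is \emph{smooth near $Z$} (possible because $q_*\sT$ vanishes there); then $q^*U$ is smooth near $E$, so the continuous superpotential $\widehat\sU$ of $\sT-q^*\alpha$ satisfies
\[
\widehat\sU(\sR)=\langle q^*U,\sR\rangle=\langle U,q_*\sR\rangle=0.
\]
Hence $\sU_{q_*\sT}(\sS_n)=\widehat\sU(q^*\sS_n)\to 0$ regardless of which cluster value $\sR$ arises. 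This is the missing idea: rather than forcing $q^*\sS_n$ to converge to a specific limit, one shows that all possible limits are invisible to $\widehat\sU$.
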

\begin{proof} Assume that $\sS\in \sD^{d-p+1}(X)$. We need to show that the superpotential of $q_* \sT$, which is a function defined on smooth forms in $\sD^{d-q+1,0}(X)$, can be extended to a continuous function on 
$\sD^{d-q+1,0}(X)$. Let $\alpha$ be a smooth closed $(p,p)$-form cohomologous to $q_*\sT$ and define $\beta:=q^*\alpha$. By hypothesis, we have $q^*q_*\sT=\sT$. It follows that $\sT$ is cohomologous to $\beta$. Fix a potential $U$ of $q_* \sT-\alpha$ which is smooth near the blowup locus. We have $dd^c  U= q_* \sT-\alpha$ and therefore 
$dd^c q^*U = \sT-\beta$. The smoothness of $U$ near the blowup locus implies that $q_*(q^*U)=U$.

Since $\alpha$ is smooth, it is enough to show that the superpotential $\sU$ of $q_* \sT-\alpha$ can be extended to a continuous function on $\sD^{d-q+1,0}(X)$. Since the last current has a vanishing cohomology class, its superpotential doesn't depend on the normalization.

\bigskip\noindent
{\bf Claim 1.}  Let $(\sS_n)$ be a bounded sequence of smooth forms in $\sD^{d-p+1}(X)$. Then the sequence $\sU(\sS_n)$ is bounded. 

\proof[Proof of Claim 1]
Since $\sS_n$ is smooth, we have
$$\sU(\sS_n)=\langle U,\sS_n\rangle = \langle q^*(U), q^*\sS_n \rangle = \widehat\sU(q^*\sS_n), $$
where $\widehat \sU$ denotes the superpotential of $\sT-\beta$.
Since the action of $q^*$ on cohomology is bounded and the mass of a positive closed current only depends on its cohomology class, we see that $q^*\sS_n$ is bounded in $\sD^{d-p+1,0}(\widehat X)$. Since $\sT$ has a continuous superpotential, we deduce that $\widehat\sU(q^*\sS_n)$ is bounded and hence $\sU(\sS_n)$ is bounded as claimed.
\endproof

\bigskip\noindent
{\bf Claim 2.}  Let $(\sS_n)$ be a bounded sequence of smooth forms in $\sD^{d-p+1}(X)$ converging to 0. Then $\sU(\sS_n)$ tends to 0.

\proof[Proof of Claim 2]
By extracting a subsequence, we can assume that $q^*\sS_n$ converges to some current $\sR$ supported by the exceptional divisor. 
By hypothesis, we have $q_*(\sR)=0$.
Using the computation in Claim 1, and the fact that $\widehat \sU$ is continuous, we get
$$\lim  \sU(\sS_n) = \widehat\sU (\sR).$$
Now, since $q^*U$ is smooth near the exceptional divisor which supports $\sR$, we deduce that 
$$\widehat\sU (\sR)=\langle q^*U,\sR\rangle = \langle U, q_*\sR\rangle =0.$$
This proves the claim.
\endproof

To finish the proof, let $\sS$ be any current in $\sD^{d-p+1,0}(X)$. Choose a bounded sequence $\sR_n$ of smooth forms in $\sD^{d-p+1,0}(X)$ converging to $\sS$. 
By Claim 1, extracting a subsequence allows to assume that $\sU(\sR_n)$ converges to some real number $l$. Consider now an arbitrary bounded sequence of 
$(\sS_n)$ of smooth forms in $\sD^{d-p+1,0}(X)$ converging to $\sS$. To show that $\sU$ extends to a continuous function at $\sS$, it is enough to check 
that $\sU(\sS_n)$ converges to $l$. This is a consequence of Claim 2 applied to the sequence $\sS_n-\sR_n$.
\end{proof}



\begin{theorem}\label{thm:tensor-conv}
  For two complex manifolds $X$ and $Y,$  consider two convergent sequences of currents $\sT_n \lto \sT$ in $\sD^q(X)$ and $\sS_n \lto \sS$ in $\sD^r(Y)$. We have that 
    $$
     \sT_{n} \otimes \sS_n \lto \sT \otimes \sS,
    $$
    weakly in $\sD^{q+r}(X \times Y).$
\end{theorem}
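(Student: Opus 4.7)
The plan is to reduce the statement to the case of positive closed currents and then verify the convergence first on tensor-type test forms, extending to arbitrary test forms by density.

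Since $\sT_n\lto \sT$ and $\sS_n\lto \sS$, the sequences have uniformly bounded mass on compact subsets of $X$ and $Y$, respectively, so we may decompose $\sT_n=\sT_n^+-\sT_n^-$ and $\sS_n=\sS_n^+-\sS_n^-$ with $\sT_n^\pm$, $\sS_n^\pm$ positive closed and of locally uniformly bounded mass. Applying Banach--Alaoglu on relatively compact open subsets together with a diagonal extraction, we pass to a subsequence along which $\sT_n^\pm\lto \widetilde\sT^\pm$ and $\sS_n^\pm\lto\widetilde\sS^\pm$ weakly, with $\widetilde\sT^+-\widetilde\sT^-=\sT$ and $\widetilde\sS^+-\widetilde\sS^-=\sS$. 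By bilinearity of the tensor product of currents, once we know the convergence of each of the four products $\sT_n^{\epsilon_1}\otimes\sS_n^{\epsilon_2}$ ($\epsilon_i\in\{+,-\}$) to $\widetilde\sT^{\epsilon_1}\otimes\widetilde\sS^{\epsilon_2}$, the identity $(\widetilde\sT^+-\widetilde\sT^-)\otimes(\widetilde\sS^+-\widetilde\sS^-)=\sT\otimes\sS$ gives the conclusion along the subsequence. Since the same argument applies to every subsequence of the original sequence, the usual subsequence principle then yields the desired convergence. Hence we may assume from the outset that $\sT_n$, $\sS_n$ and their limits are positive closed with locally uniformly bounded mass.

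In this reduced setting, $\sT_n\otimes\sS_n$ is positive closed on $X\times Y$, and its mass on any compact $K_1\times K_2$ is dominated by a constant times $\|\sT_n\|_{K_1}\|\sS_n\|_{K_2}$, hence locally uniformly bounded. For a compactly supported smooth tensor test form $\varphi=\pi_1^*\alpha\wedge\pi_2^*\beta$ of complementary bidegree, the defining identity of tensor products of currents gives
\[
\langle \sT_n\otimes\sS_n,\ \pi_1^*\alpha\wedge\pi_2^*\beta\rangle \;=\; \pm\,\langle \sT_n,\alpha\rangle\,\langle \sS_n,\beta\rangle,
\]
and each factor converges by hypothesis to the analogous expression with $\sT_n,\sS_n$ replaced by $\sT,\sS$. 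For a general compactly supported smooth test form $\varphi$ on $X\times Y$, a coefficient-wise Stone--Weierstrass argument in local product charts produces, for every $\epsilon>0$, a finite linear combination $\psi_\epsilon=\sum_i \pi_1^*\alpha_i\wedge \pi_2^*\beta_i$ of tensor forms with $\|\varphi-\psi_\epsilon\|_{C^0}<\epsilon$ on a fixed compact neighbourhood of $\operatorname{supp}\varphi$. Since positive closed currents are of order zero, the uniform local mass bound yields $|\langle \sT_n\otimes\sS_n,\varphi-\psi_\epsilon\rangle|\le C\epsilon$ with $C$ independent of $n$; combining with the convergence already established for $\psi_\epsilon$ gives $\langle \sT_n\otimes\sS_n,\varphi\rangle\to\langle \sT\otimes\sS,\varphi\rangle$.

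The main technical point is the Stone--Weierstrass approximation: it must be performed on each smooth coefficient of $\varphi$ in local coordinates and then reassembled while preserving the $(q+r,q+r)$ bidegree structure, with uniformity on the compact support of $\varphi$. Apart from this, the argument is essentially bookkeeping, relying on the order-zero nature of positive closed currents and the locally uniform mass bound on the tensor products.
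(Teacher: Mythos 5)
Your proof is correct and follows essentially the same route as the paper's (sketched) argument: reduce to tensor-type test forms by Weierstrass approximation in local product charts with a partition of unity, where the pairing factorises and converges factor-wise. Your additional reduction to positive closed currents --- which makes the tensor products order zero with locally uniform mass so that $C^0$-approximation suffices --- is a refinement that supplies the uniformity in $n$ which the paper's $C^\infty$-approximation sketch leaves implicit.
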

\begin{proof}[Sketch of the proof]
Let us denote by $(x,y)$ the coordinates on $X \times Y$. Using local coordinates and a partition of unity and Weierstrass theorem we can approximate any smooth forms on $X \times Y$ with forms with polynomial coefficients in $(x,y).$ The approximation is in $C^\infty$. As a result, the convergence, we only need test forms with monomial coefficients. Thus, the variables $x, y$ are separated, and the convergence of the tensor products becomes the convergence of each factor.\end{proof}
\subsection{Semi-continuity of slices} Let $f:X\lto Y$ be a dominant holomorphic map between complex manifolds, not necessarily compact, of dimension $d$ and $m$ respectively. Let $\sT$ be a positive closed current on $X$ of bi-dimension $(p,p)$ with $p\geq m$. Then a slice
$$\sT^f_y=\langle \sT|f|y\rangle$$
obtained by restricting $\sT$ to $f^{-1}(y)$ exists for almost every $y\in Y$; see \cite{DemaillyBook1}*{Page 171}. This is a positive closed current of bi-dimension $(p-m,p-m)$ on $X$ with the support contained $f^{-1}(y)$. If $\Omega$ is a smooth form of maximal bi-degree on $Y$ and $\alpha$ a smooth $(p-m,p-m)$-form with compact support in $X$, then we have
$$
\langle \sT,\alpha\wedge f^*(\Omega)\rangle = \int_{y\in Y} \langle \sT^f_y,\alpha\rangle \Omega(y).
$$
In general, for a fixed $f$, the equality $\langle \sT|f|y\rangle = \langle \sT'|f|y\rangle$ for almost every $y$, does not imply that $\sT=\sT'$. However, the following is true: let $f_1,\ldots,f_k$ be dominant holomorphic maps from $X$ to $Y_1,\ldots, Y_k$. Consider the vector space spanned by all the differential forms of type $\alpha\wedge f_i^*(\Omega_i)$ for some $\alpha$ as above and some smooth form $\Omega_i$ on $Y_i$ of maximal degree. Assume this space is equal to the space of all $(p,p)$-forms of compact support in $X$. Then if $\langle \sT|f_i|y_i\rangle = \langle \sT'|f_i|y\rangle$ for every $i$ and almost every $y\in Y_i$, we have $\sT=\sT'$. The proof is a consequence of the above discussion. 
\vskip 2mm
Now let $ U \subseteq \C^m $ and $V \subseteq \C^d$ be two bounded open sets. Assume that $\pi_1 : U \times V \lto U$ and $\pi_2: U\times V \lto V$ are the canonical projections. Consider two closed positive currents $\sT$ and $\sS$ on $U \times V$ of bi-dimension $(m,m)$ and $(d,d)$ respectively. We say that $\sT$ horizontal-like if  $\pi_2(\supp{\sT})$ is relatively compact in $V$. Similarly, if $\pi_1(\supp{\sS})$ is relatively compact in $U$, $\sS$ is called  vertical-like.

\begin{theorem}[\cite{Berteloot--Dinh}*{Lemma 3.7}]\label{thm:Berteloot-Dinh}
    Let $(\sT_n) \lto \sT$ be a convergent sequence of horizontal-like positive closed currents to a horizontal-like current $\sT$  in $U \times V.$  Let $a\in U$ and assume that the sequence of measures $(\langle \sT_n, \pi_1 , a \rangle)_n$ is also convergent. Then, 
    $$
     \lim_{n \to \infty } \langle \sT_n| \pi_1 | a \rangle (\phi)  \leq  \langle \sT| \pi_1 | a \rangle (\phi) 
    $$
    for every plurisubharmonic function $\phi$ on $\C^d.$
    
\end{theorem}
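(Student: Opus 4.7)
The plan is a triple approximation: regularise $\phi$ from above by smooth psh $\phi_\delta\searrow\phi$; smear the Dirac mass at $a\in U$ by a radially symmetric mollifier $\chi_\epsilon$ on $U$; and pass $n\to\infty$ via the weak convergence $\sT_n\to\sT$ against the resulting smooth compactly supported form on $U\times V$. The desired inequality is then extracted from the sub-mean-value inequality for plurisubharmonic functions.

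For $n\geq 1$ (or $\sT$ in place of $\sT_n$) and $\delta\geq 0$ (with $\phi_0:=\phi$), set $u_{n,\delta}(a'):=\int\phi_\delta\,d\langle\sT_n|\pi_1|a'\rangle$, and similarly define $u_\delta$ and $u$. The pivotal fact is that each of these extends to a plurisubharmonic function on $U$: for smooth psh $\phi_\delta$ this follows from
\[
dd^c\bigl((\pi_1)_*(\pi_2^*\phi_\delta\cdot\sT_n)\bigr)=(\pi_1)_*\bigl(\pi_2^*(dd^c\phi_\delta)\wedge\sT_n\bigr)\geq 0,
\]
together with the fact that, for a horizontal-like positive closed current of maximal horizontal bi-dimension, $(\pi_1)_*\sT_n$ is a constant multiple of the standard volume form $\omega_U$ on $U$; the case of a general psh $\phi$ follows from the monotone decreasing limit $u_{n,\delta}\searrow u_n$ as $\delta\to 0$, since decreasing limits of psh functions are psh.

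Let $\chi_\epsilon$ be a smooth, non-negative, symmetric and radially decreasing mollifier on $\C^m$ supported in $B(0,\epsilon)$ and normalised by $\int\chi_\epsilon\,\omega_U=1$. Choose $\epsilon$ small enough that $B(a,\epsilon)\Subset U$. The sub-mean-value inequality for the psh $u_{n,\delta}$ together with $\phi_\delta\geq\phi$ give
\[
u_n(a)\leq u_{n,\delta}(a)\leq (u_{n,\delta}*\chi_\epsilon)(a).
\]
The slicing identity rewrites the right-hand side as
\[
(u_{n,\delta}*\chi_\epsilon)(a)=\bigl\langle\sT_n,\,\pi_1^*(\chi_\epsilon(\cdot-a)\,\omega_U)\wedge\pi_2^*(\phi_\delta)\bigr\rangle,
\]
where the horizontal-like hypothesis allows a fixed cutoff in $V$ so that the test form has compact support in $U\times V$. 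By weak convergence this tends to $(u_\delta*\chi_\epsilon)(a)$ as $n\to\infty$, so $\lim_n u_n(a)\leq (u_\delta*\chi_\epsilon)(a)$. Letting $\epsilon\to 0$, the psh function $u_\delta$ convolved against a radially decreasing mollifier decreases to $u_\delta(a)$, giving $\lim_n u_n(a)\leq u_\delta(a)$. Finally, letting $\delta\to 0$ with $\phi_\delta\searrow\phi$, monotone convergence yields $u_\delta(a)\searrow u(a)$, which produces $\lim_n\langle\sT_n|\pi_1|a\rangle(\phi)\leq\langle\sT|\pi_1|a\rangle(\phi)$, as desired.

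The principal obstacle is the plurisubharmonicity of the slice-averages $u_{n,\delta}$, whose rigorous treatment rests on the structural property that $(\pi_1)_*\sT_n$ is a multiple of $\omega_U$ for horizontal-like closed currents of maximal horizontal bi-dimension. Once this is granted, the three successive approximations $\phi_\delta\searrow\phi$, $\chi_\epsilon\to\delta_a$, and $\sT_n\to\sT$ execute cleanly in sequence, each step justified by monotone or weak convergence against positive currents.
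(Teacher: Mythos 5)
The paper does not prove this statement---it is imported verbatim from \cite{Berteloot--Dinh}*{Lemma 3.7}---so there is no internal proof to compare against; your argument is the standard one and is, in substance, the proof in that reference. The threefold approximation (smooth psh $\phi_\delta\searrow\phi$, mollification of the Dirac mass at $a$, then $n\to\infty$ by weak convergence against a smooth compactly supported test form) is correct, and the identification you use implicitly---that the slice value $\langle \sT_n|\pi_1|a\rangle(\phi_\delta)$ coincides with the value at $a$ of the upper-semicontinuous psh representative of $(\pi_1)_*(\pi_2^*\phi_\delta\cdot\sT_n)$---does hold, because that value is by definition the decreasing limit as $\epsilon\to 0$ of the mollified averages, which is exactly how the slice at $a$ against $\phi_\delta$ is computed. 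Two hypotheses you lean on should be made explicit: the vertical supports $\pi_2(\supp{\sT_n})$ must lie in a fixed compact subset of $V$ (so that a single cutoff works for all $n$ and the test form is genuinely compactly supported uniformly in $n$), and slices of positive closed horizontal-like currents of this bidimension exist at every point of $U$ with constant mass; both are part of the standard framework for convergence of horizontal-like currents and are assumed in the cited lemma.
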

There is a simple version of the above theorem for supports which will be useful later. 
\begin{lemma}\label{lem:slice-supp}
Assume that the $\sT_n$'s, $\sS$, and $\sT$ are all closed positive currents, and that $\sT_n \wedge \sS$ and $\sT \wedge \sS$ are well-defined. Further, suppose that we have the convergent sequences
$$
\sT_n \lto \sT, \quad  \supp{\sT_n} \lto \supp{\sT},
$$
in the sense of currents and in the Hausdorff metric for supports. Then,
$$
\supp{ \lim (\sT_n \wedge \sS)} \subseteq  \supp{\sT} \cap \supp{\sS}.
$$
\end{lemma}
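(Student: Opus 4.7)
The plan is to establish the inclusion pointwise: for every point $x_0 \notin \supp{\sT} \cap \supp{\sS}$, I will exhibit an open neighbourhood $U$ of $x_0$ on which $\lim(\sT_n \wedge \sS)$ vanishes. This reduces to showing that the currents $\sT_n \wedge \sS$ themselves vanish on $U$, either for all $n$ (if $x_0$ misses $\supp\sS$) or for all sufficiently large $n$ (if $x_0$ misses $\supp\sT$).

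In the first case, if $x_0 \notin \supp{\sS}$, then there is an open neighbourhood $U$ of $x_0$ disjoint from $\supp{\sS}$, so that $\sS$ vanishes on $U$. Since each $\sT_n \wedge \sS$ is well-defined, the locality of the wedge product — i.e.\ the support containment $\supp{\sT_n \wedge \sS} \subseteq \supp{\sT_n} \cap \supp{\sS}$, which holds in the superpotential framework by Theorem~\ref{thm:wedge-cont-pot}(c) — gives $(\sT_n \wedge \sS)|_U = 0$ for every $n$, and hence the limit vanishes on $U$.

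In the second case, if $x_0 \notin \supp{\sT}$, let $\epsilon := d(x_0, \supp{\sT}) > 0$ and set $U := B(x_0, \epsilon/2)$. Writing $V$ for the open $\epsilon/2$-neighbourhood of $\supp{\sT}$, the Hausdorff convergence $\supp{\sT_n} \lto \supp{\sT}$ forces $\supp{\sT_n} \subseteq V$ for all $n$ large enough, so that $U$ is disjoint from $\supp{\sT_n}$ and hence $\sT_n$ vanishes on $U$. Again by locality, $(\sT_n \wedge \sS)|_U = 0$ for every large $n$, and passing to the limit yields $\lim(\sT_n \wedge \sS)|_U = 0$. Combining the two cases, every point outside $\supp{\sT} \cap \supp{\sS}$ lies outside the support of $\lim(\sT_n \wedge \sS)$, which is the desired inclusion.

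The only delicate point — and the one I expect to emphasise explicitly — is the locality of the wedge product in each case: the fact that if a factor vanishes on an open set, so does the product. For the wedge products considered in this paper this is not an extra assumption but a consequence of the superpotential construction (via Theorem~\ref{thm:wedge-cont-pot}(c) and Lemma~\ref{lem:local-equality}); everything else in the argument is formal use of Hausdorff convergence and the definition of support.
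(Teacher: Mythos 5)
Your proof is correct and follows essentially the same two-case argument as the paper: points off $\supp{\sT}$ are handled via the Hausdorff convergence of supports (so $\sT_n$ vanishes on a fixed ball for large $n$), and points off $\supp{\sS}$ are handled directly, with locality of the wedge product supplying the vanishing of $\sT_n\wedge\sS$ in each case. Your explicit attention to why the wedge product is local (via Theorem~\ref{thm:wedge-cont-pot}(c)) is a point the paper leaves implicit, but the route is the same.
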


\begin{proof}
For a point $z$ outside the support of $\sT$, there exists a sufficiently small radius $\epsilon$ such that, for sufficiently large $n$, the current $\sT_n$ vanishes on the ball $B_{\epsilon}(z)$ centred at $z$. It follows that any limit of $\sT_n \wedge \sS$ vanishes on $B_{\epsilon}(z)$. So its support does not contain $z$. Moreover, its support does not contain any point outside $\supp{\sS}$.
\end{proof}

\section{Tropical Cycles, Tori, Tropical Currents}

In this section, we recall the definition of tropical cycles and note that with the natural addition of tropical cycles and their \emph{stable intersection}, the tropical cycles form a ring.

\subsection{Tropical varieties}
A linear subspace $H \subseteq \R^d$ is said to be \textit{rational} if there exists a subset of $\Z^d$ that spans $H$. A \emph{rational polyhedron} is the intersection of finitely many rational half-spaces defined by
\[
\{x \in \R^d : \langle m, x \rangle \geq c, \text{ for some } m \in \Z^d,~ c \in \R \}.
\]
A \emph{rational polyhedral complex} is a polyhedral complex consisting solely of rational polyhedra. The polyhedra in a polyhedral complex are also referred to as \textit{cells}. A \textit{fan} is a polyhedral complex whose cells are all cones. If every cone in a fan $\Sigma$ is contained in another fan $\Sigma'$, then $\Sigma$ is called a \emph{subfan} of $\Sigma$. The one-dimensional cones of a fan are often called \emph{rays}. Throughout this article, all fans and polyhedral complexes are assumed to be \emph{rational}.

\vskip 2mm
For a given polyhedron $\sigma,$ and a finitely generated abelian group $N$, we denote by 
\begin{eqnarray*}
    \text{aff}(\sigma)&:= &\text{affine span of $\sigma$}, \\
    H_{\sigma}&:=& \text{translation of $\text{aff}(\sigma)$ to the origin}, \\
    N_{\sigma} &:=& N \cap H_{\sigma},\\
    N(\sigma)&:=& N/N_{\sigma}.
\end{eqnarray*}

Consider $\tau,$ a codimension one face of a $p$-dimensional polyhedron $\sigma,$ and let $u_{\sigma / \tau }$ be the unique outward generator of the one-dimensional lattice $(\Z^d \cap H_{\sigma})/(\Z^d \cap H_{\tau}).$

\begin{definition}[Balancing Condition and Tropical Cycles]\label{BalancingCondition}
Let $\mathscr{C}$ be a $p$-dimensional polyhedral complex whose $p$-dimensional cones are equipped with integer weights $w_{\sigma}$. We say that $\cC$ satisfies the \emph{balancing condition} at $\tau$ if
\[
\sum_{\sigma\supset \tau} w(\sigma)~ u_{\sigma/\tau}=0, \quad \text{in } \Z^d /(\Z^d \cap H_{\tau}),
\]
where the sum is over all $p$-dimensional cells $\sigma$ in $\mathcal{C}$ containing $\tau$ as a face. A  \emph{tropical cycle} or a \emph{tropical variety} in $\R^d$ is a weighted complex with finitely many cells that satisfies the balancing condition at every cone of dimension $p-1$. We say a tropical cycle is a \emph{tropical fan} if its support is a fan.
\end{definition}

\subsection{Stable intersection and addition of tropical cycles}


Recall that, generally speaking, the star of a cone in a complex is the extension of the local $p$-dimensional fan surrounding it. More precisely:

\begin{definition} Given a polyhedral complex $\Sigma \subseteq \mathbb{R}^d$ and a cell $\tau \in \Sigma$, define the star of $\tau$ in $\Sigma,$ denoted by $\text{star}_{\Sigma}(\tau),$ is a fan in $\mathbb{R}^d$. The cones of $\text{star}_{\Sigma}(\tau)$ are the \emph{extensions} of cells $\sigma$ that include $\tau$ as a face. Here, by extension, we mean
    \[
    \bar{\sigma} = \{\lambda (x - y) : \lambda \geq 0,  x \in \sigma,\, y \in \tau\}.
    \]
\end{definition}

\begin{definition}[Stable Intersection]\label{def:stable-int}
\begin{itemize}
    \item [(a)] Let $\cC_1, \cC_2 \subseteq \R^d$ be two positively weighted tropical cycles of dimension $p$ and $q,$ intersecting transversely. That is, the top dimensional cells $\sigma_1 \in \cC_1$ and $\sigma_2 \in \cC_2$ intersect in dimension $p+q-d$ and in the relative interior of these cells.  Then the stable intersection of $\cC_1 \cdot \cC_2$ is the tropical cycles supported on finitely many cells $\cC_1 \cap \cC_2.$ In this case, the weight of a cell $\sigma_1 \cap \sigma_2$ if defined by
    \[
    w_{\cC_1 \cdot \cC_2} (\sigma_1 \cap \sigma_2) = w_{\sigma_1} w_{\sigma_2} [N: {N}_{\sigma_1} + {N}_{\sigma_2}],
    \]
    where $N = \Z^d$ here.  
    \item [(b)] When $\cC_1$ and $\cC_2$ do not intersect transversely, then  $\cC_1 \cdot \cC_2$ as a set is the Hausdorff limit of 
    $$\cC_1 \cap  (\epsilon b + \cC_2), \quad \text{as $\epsilon \to 0$},$$ 
    for a fixed generic $b \in \R^d,$ and the weights are the sum of all the tropical multiplicities of the cells in the transversal intersection $\cC_1 \cap (\epsilon b + \cC_2)$ which converge to the same $(p+q-d)$-dimensional cell in the Hausdorff metric. Equivalently, for top dimensional cones $\sigma_1 \in \cC_1$ and $\sigma_2 \in \cC_2$
     \[
    w_{\cC_1 \cdot \cC_2} (\sigma_1 \cap \sigma_2) = \sum_{\tau_1, \tau_2} w_{\tau_1} w_{\tau_2} [{N}: {N}_{\tau_1} + {N}_{\tau_2}],
    \]
 where the sum is taken over all $\tau_1 \in \text{star}_{\cC_1}(\sigma_1 \cap \sigma_2), \tau_2 \in \text{star}_{\cC_2}(\sigma_1 \cap \sigma_2)$ with $\tau_1 \cap (\epsilon b + \tau_2) \neq \varnothing,$ for some fixed generic vector $b \in \mathbb{R}^d.$

 \item [(c)]  When $p+q < d,$ then the stable intersection of $\cC_1$ and $\cC_2$ is the empty set.

\end{itemize}

\end{definition}

The following result is proved in tropical geometry; see \cite{Maclagan-Sturmfels}*{Lemmas 3.6.4 and 3.6.9}. We will revisit its proof later through the lens of superpotential theory.
\begin{theorem}\label{thm:stable-invar}
When $p+q \geq d,$ the stable intersection, defined above, yields a balanced polyhedral complex of dimension $p+q-d$.
\end{theorem}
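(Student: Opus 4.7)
The plan is to verify the balancing condition at every codimension-one face of the stable intersection, handling first the transverse case of Definition \ref{def:stable-int}(a) and then deducing the general case (b) via a Hausdorff limit. An alternative, more conceptual route would be to identify $\sT_{\cC_1 \cdot \cC_2}$ with the wedge $\sT_{\cC_1} \wedge \sT_{\cC_2}$ and invoke the closedness of wedge products of currents with continuous superpotentials (Theorem \ref{thm:wedge-cont-pot} combined with Theorem \ref{ithm:trop-curr-CSP}); however, that approach must be deferred until those results are available, whereas the statement here is purely combinatorial and admits a self-contained proof.

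For the transverse case, fix a $(p+q-d-1)$-dimensional cell $\tau$ of $\cC_1 \cdot \cC_2$. By transversality, up to swapping the roles of $\cC_1$ and $\cC_2$, one may write $\tau = \sigma_1 \cap \rho_2$ with $\sigma_1 \in \cC_1$ top-dimensional and $\rho_2 \in \cC_2$ of codimension one. The top-dimensional cells of $\cC_1 \cdot \cC_2$ containing $\tau$ are exactly the transverse intersections $\sigma_1 \cap \sigma_2$ as $\sigma_2$ ranges over the top-dimensional cells of $\cC_2$ with $\sigma_2 \supset \rho_2$. Transversality yields the linear isomorphism
\[
H_{\sigma_1 \cap \sigma_2}/H_\tau \;\xrightarrow{\sim}\; H_{\sigma_2}/H_{\rho_2},
\]
under which the primitive generator $u_{(\sigma_1 \cap \sigma_2)/\tau}$ corresponds to a positive rational multiple of $u_{\sigma_2/\rho_2}$. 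The core combinatorial point is that once this correspondence is combined with the index $[N : N_{\sigma_1} + N_{\sigma_2}]$ appearing in the multiplicity formula, the resulting scalar depends only on $\sigma_1$ and not on the particular $\sigma_2 \supset \rho_2$. Factoring this common scalar out of the sum, the balancing of $\cC_1 \cdot \cC_2$ at $\tau$ reduces to $w_{\sigma_1}$ times the balancing of $\cC_2$ at $\rho_2$, which vanishes by hypothesis.

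For the general (non-transverse) case, the perturbed complex $\cC_1 \cap (\epsilon b + \cC_2)$ is balanced for generic $b$ and small $\epsilon > 0$ by the transverse case. When several top-dimensional cells of these perturbations Hausdorff-converge to a common cell of $\cC_1 \cdot \cC_2$ as $\epsilon \to 0$, their multiplicities are summed according to Definition \ref{def:stable-int}(b), and the linear balancing equation at a limiting cell $\tau_0$ is obtained by summing the balancing equations at the nearby cells of the perturbed complex that collapse onto $\tau_0$. The main obstacle is the lattice-theoretic bookkeeping in the transverse case: one must verify that $[N : N_{\sigma_1} + N_{\sigma_2}]$ combines cleanly with the change-of-primitive-generator factor from the isomorphism above to yield a coefficient that is genuinely independent of $\sigma_2$. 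A secondary subtlety, which is standard but requires care, is that the generic vector $b$ must be chosen outside a codimension-one "bad locus" of $\R^d$ to simultaneously guarantee transversality of $\cC_1 \cap (\epsilon b + \cC_2)$ for all small $\epsilon$ and well-definedness of the Hausdorff limit.
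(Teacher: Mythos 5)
Your proof is correct, but it follows a genuinely different route from the paper's. The paper does not prove Theorem~\ref{thm:stable-invar} combinatorially at all: it attributes the classical argument to Maclagan--Sturmfels and then \emph{re-derives} the statement analytically, by establishing $\sT_{\cC_1}\wedge\sT_{\cC_2}=\sT_{\cC_1\cdot\cC_2}$ (Theorem~\ref{thm:main-int}) and reading off balancing from the closedness of the wedge product of currents with continuous superpotentials together with the equivalence ``closed $\Leftrightarrow$ balanced'' of Theorem~\ref{thm:closed-balanced} --- exactly the route you mention and defer. Your direct verification is essentially the cited classical proof, and its ``core combinatorial point'' does check out: writing $\lambda_{\sigma_2}=[N_{\sigma_2}:(N_{\sigma_1}\cap N_{\sigma_2})+N_{\rho_2}]$ for the change-of-primitive-generator factor under your isomorphism, the modular law $N_{\sigma_2}\cap(N_{\sigma_1}+N_{\rho_2})=(N_{\sigma_1}\cap N_{\sigma_2})+N_{\rho_2}$ (valid because $N_{\rho_2}\subseteq N_{\sigma_2}$) yields
\[
[N:N_{\sigma_1}+N_{\sigma_2}]\cdot\lambda_{\sigma_2}=[N:N_{\sigma_1}+N_{\rho_2}],
\]
which indeed depends only on $\sigma_1$ and $\rho_2$, so the balancing at $\tau$ pulls back to a constant multiple of the balancing of $\cC_2$ at $\rho_2$. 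As for what each approach buys: yours is self-contained and free of any circularity (the analytic proof relies on machinery developed only afterwards, while the stable intersection is already used, e.g., in Example~\ref{eg:stable-int}); the paper's approach buys more, namely that the stable intersection is realised by an actual wedge product of currents, from which balancing, the dimension count, and independence of the generic displacement $b$ all follow from closedness and SP-continuity. Two points in your write-up deserve tightening: the ``bad locus'' for $b$ is a finite union of proper affine subspaces (generic means outside a nowhere dense set, not merely a codimension-one one), and in the limiting step the phrase ``summing the balancing equations'' should be localised --- work in a small ball around a relative interior point of $\tau_0$, where the lattices $N_\sigma$ and primitive vectors of the perturbed complexes are independent of $\epsilon$, so that no top-dimensional cell of the perturbation is double-counted and cells whose limits drop below dimension $p+q-d$ contribute nothing.
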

We also need the following for turning the set of tropical cycles into a $\Z$-algebra. 
\begin{definition}[Addition of Tropical Cycles]\label{def:add-trop}
    For two $p$-dimensional tropical cycles $\cC_1, \cC_2$ in $\mathbb{R}^d,$ the addition $\cC_1 + \cC_2$ is the tropical cycle obtained by the common refinement of the support $|\cC_1| \cup |\cC_2|$ where the weights of a cone $\sigma$ in the refinement are determined by $w_{\cC_1 + \cC_2}(\sigma) = w_{\cC_1}(\sigma) + w_{\cC_2}(\sigma).$
\end{definition}

Let us end this section with an example of the stable intersection. 
\begin{example}\label{eg:stable-int}
Consider the tropical cycles in the Figure \ref{fig:stable-int}. We can easily check that both cycles with the given weights satisfy the balancing condition. The line $\{ x_2=0\}$ with weight $1$ can be considered a tropical cycle that does not properly intersect the blue tropical variety. We can therefore translate by the vector $(0, -\epsilon )$ it to obtain $x_2= -\epsilon,$ and compute the stable intersection by taking the limit as $\epsilon \to 0.$ Calculating the multiplicities and Hausdorff limit gives the stable intersection equal to the origin $(0,0)$ with multiplicity 2. Note that we can choose any $\epsilon b \in \R^2$ generic for this translation, and thanks to Theorem \ref{thm:stable-invar} we obtain the same result.
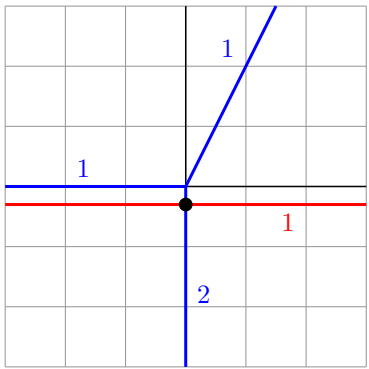
\begin{figure}
   
\begin{center}
  \begin{tikzpicture}[scale=0.8]
  \foreach \x in {-3,-2,...,3} {
    \draw[black!40] (\x,-3) -- (\x,3);
  }
  \foreach \y in {-3,-2,...,3} {
    \draw[black!40] (-3,\y) -- (3,\y);
  }

  \draw[line width=0.2mm,  black] (0,0) -- (3,0) node[below right] {$ $};
  \draw[line width=0.2mm,  black] (0,0) -- (0,3) node[above left] {$ $};

  \draw[line width=0.4mm,  blue]  (0,0) -- (0,-3);         
  \draw[line width=0.4mm,  blue]  (0,0) -- (-3,0);         
  \draw[line width=0.4mm,  blue]  (0,0) -- (1.5,3);        

  \node[blue] at (0.3,-1.8) {\small $2$};
  \node[blue] at (-1.7,0.3) {\small $1$};
  \node[blue] at (0.7,2.3) {\small $1$};


  \draw [line width=0.4mm,  red]  (-3,-0.3) -- (3,-0.3);      
  \node[red] at (1.7,-0.6) {\small $1$};
  \filldraw[black] (0,-0.3) circle (3pt);
\end{tikzpicture}
\end{center}

    \caption{An example of the stable intersection}
    \label{fig:stable-int}

\end{figure}
\end{example}

\section{Tropical Currents}
Closed currents associated with tropical varieties in the context of supercurrents first appeared in Lagerberg’s thesis (see \cite{Lagerberg}). The introduction of complex tropical currents as \emph{laminar currents} in \cite{Babaee}, outlined below, made it convenient to examine the extremality, cohomological, and dynamical properties of tropical currents; see \cite{BH, Adi-Baba, Dyn-trop}. A nice and simple relation between Lagerberg currents and complex tropical currents is discussed in \cite{Gil-Gubler-Jell-Kunnemann}.
\vskip 2mm
For a finitely generated abelian group $N$ of rank $d,$ we define:
\begin{eqnarray*}
T_N&:=&\text{the complex algebraic torus $\mathbb{C}^* \otimes_\mathbb{Z} N$,}\\
S_N&:=&\text{the compact real torus $S^1 \otimes_\mathbb{Z} N$,}\\
N_\mathbb{R}&:=&\text{the real vector space $\mathbb{R} \otimes_\mathbb{Z} N$.}
\end{eqnarray*}

Let \( \mathbb{C}^* \) denote the multiplicative group of nonzero complex numbers. As before, we define the following canonical maps on the complex torus \( (\mathbb{C}^*)^d \):

\begin{itemize}
    \item The \emph{logarithm map}:
    \[
    \mathrm{Log} : (\mathbb{C}^*)^d \longrightarrow \mathbb{R}^d, \qquad (z_1, \dots, z_d) \longmapsto (-\log |z_1|, \dots, -\log |z_d|).
    \]

    \item The \emph{argument map}:
    \[
    \mathrm{Arg} : (\mathbb{C}^*)^d \longrightarrow (S^1)^d, \qquad (z_1, \dots, z_d) \longmapsto \left( \frac{z_1}{|z_1|}, \dots, \frac{z_d}{|z_d|} \right).
    \]
\end{itemize}

Let \( H \subseteq \mathbb{R}^d \) be a rational linear subspace. We have the following exact sequence of abelian groups:
\[
\xymatrix{
0 \ar[r] & H \cap \mathbb{Z}^d \ar[r] & \mathbb{Z}^d \ar[r] & \mathbb{Z}^d / (H \cap \mathbb{Z}^d) \ar[r] & 0.
}
\]

This induces a corresponding exact sequence of real tori:
\[
\xymatrix{
0 \ar[r] & S_{H \cap \mathbb{Z}^d} \ar[r] & (S^1)^d \cong S^1 \otimes_{\mathbb{Z}} \mathbb{Z}^d \ar[r] & S_{\mathbb{Z}^d / (H \cap \mathbb{Z}^d)} \ar[r] & 0.
}
\]

Define the map
\[
\pi_H : \mathrm{Log}^{-1}(H) \xrightarrow{\mathrm{Arg}} (S^1)^d \longrightarrow S_{\mathbb{Z}^d / (H \cap \mathbb{Z}^d)}.
\]

\vspace{2mm}

Similarly, there is an exact sequence of complex tori:
\[
\xymatrix{
0 \ar[r] & T_{H \cap \mathbb{Z}^d} \ar[r] & (\mathbb{C}^*)^d \cong \mathbb{C}^* \otimes_{\mathbb{Z}} \mathbb{Z}^d \ar[r] & T_{\mathbb{Z}^d / (H \cap \mathbb{Z}^d)} \ar[r] & 0.
}
\]

Define the projection map:
\[
\Pi_H : (\mathbb{C}^*)^d \cong \mathbb{C}^* \otimes \left( (H \cap \mathbb{Z}^d) \oplus \mathbb{Z}^d / (H \cap \mathbb{Z}^d) \right) \longrightarrow T_{\mathbb{Z}^d / (H \cap \mathbb{Z}^d)}.
\]

Then we have the identity:
\[
\ker(\Pi_H) = \ker(\pi_H) = T_{H \cap \mathbb{Z}^d} \subseteq (\mathbb{C}^*)^d.
\]

As a result,  when $H$ is of dimension $p,$ the set $\text{Log}^{-1}(H)$ is naturally foliated by the  $\pi_H^{-1}(x) = T_{H \cap \mathbb{Z}^d}\cdot x \, \simeq (\C^*)^p$ for $x \in S_{\mathbb{Z}^d/(H \cap \mathbb{Z}^d)}.$ For a lattice basis $u_1, \dots , u_p$ of $H \cap \Z^d,$ the tori $T_{H \cap \Z^d}\cdot x$ can be parametrised by the monomial map
$$
 (\C^*)^p \lto (\C^*)^d, \quad z \lmto x\cdot  z^{[u_1, \dots , u_p]^{t}}
$$
where $U=[u_1, \dots , u_p]$ is the matrix with column vectors $u_1, \dots , u_p,$ and $z^{U^{t}}$ denotes that $z \in (\C^*)^p$ is taken to have the exponents with rows of the matrix $U.$ Accordingly, one can easily check that 
$$
T_{H\cap \Z^d}\cdot x =\{z\in (\C^*)^d:  z^{m_i} = x^{m_i}, \, i=1, \dots , d-p  \} .
$$
for any choice of a $\Z$-basis $\{m_1, \dots , m_{d-p} \}$ of  $ \Z^d / (H \cap \Z^d).$ 
\begin{definition}\label{def:T-H}
Let $H$ be a rational subspace of dimension $p,$ and $\mu$ be the Haar measure of mass $1$ on $S_{\mathbb{Z}^d/(H \cap \mathbb{Z}^d)}$. We define a $(p,p)$-dimensional closed current $\mathscr{T}_H$ on $(\mathbb{C}^*)^d$ by
\[
\mathscr{T}_H:=\int_{x \in S_{\mathbb{Z}^d/(H \cap \mathbb{Z}^d)}} \big[\pi_H^{-1}(x)\big] \ d\mu(x).
\]
\end{definition}
When $A$ is a rational affine subspace of $\mathbb{R}^d$ parallel to the linear subspace $H= A - {a}$ for $a \in A$, we define $\sT_{A}$ by translation of $\sT_H.$ Namely, we define the submersion $\pi_{A}$ as the composition
\[
\pi_{A}:\xymatrix{\text{Log}^{-1}(A) \ar[r]^{e^{a} } & \text{Log}^{-1}(H) \ar[r]^{\pi_{H} }&S_{\mathbb{Z}^d/(H \cap \mathbb{Z}^d)}.}
\]
We set $T^A:= \pi_A^{-1}(1)= \ker \pi_A =e^{-a} T_{H \cap \Z^d},$ which is a fibre of $\sT_{A}.$

\begin{definition}
Let $\mathcal{C}$ be a weighted polyhedral complex of dimension $p$. The complex tropical current (hereafter tropical current) $\mathscr{T}_{\mathcal{C}}$ associated to ${\mathcal{C}}$ is given by
$$
\mathscr{T}_{\mathcal{C}}= \sum_{\sigma} w_{\sigma} ~ \mathbbm{1}_{\Log^{-1}(\sigma)}\mathscr{T}_{\text{aff}(\sigma)} ,
$$
where the sum runs over all $p$-dimensional cells $\sigma$ of $\cC.$
\end{definition}


\begin{theorem}[\cite{Babaee}]\label{thm:closed-balanced}
A weighted complex \(\mathcal{C}\) is balanced if and only if \(\mathscr{T}_{\mathcal{C}}\) is closed.
\end{theorem}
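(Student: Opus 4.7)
The plan is to localise the question of closedness of $\mathscr{T}_{\mathcal{C}}$ to each codimension-one face $\tau$ of $\mathcal{C}$, and show that the boundary contribution at $\Log^{-1}(\tau)$ is exactly governed by the tropical balancing vector at $\tau$. First I would establish, as a warm-up, that for every rational affine subspace $A$ the current $\mathscr{T}_{A}$ of Definition \ref{def:T-H} is closed on $\Log^{-1}(A)$: each fibre $\pi_{A}^{-1}(x)$ is a translate of the subtorus $T_{H\cap \mathbb{Z}^d}$, hence a complex submanifold, so $[\pi_A^{-1}(x)]$ is closed, and the defining average uses the smooth Haar measure on $S_{\mathbb{Z}^d/(H \cap \mathbb{Z}^d)}$, so $d$ commutes with the integration. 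Consequently, for a top cell $\sigma$ the only source of $d\bigl(\mathbbm{1}_{\Log^{-1}(\sigma)} \mathscr{T}_{\mathrm{aff}(\sigma)}\bigr)$ is the jump of the indicator across $\partial\sigma$, which by a distributional Stokes argument is a current supported on $\Log^{-1}(\partial\sigma)=\bigcup_{\tau \prec \sigma}\Log^{-1}(\tau)$.

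Next I would compute the contribution at a fixed $(p{-}1)$-dimensional face $\tau$. Pick a $\mathbb{Z}$-basis $e_1,\dots,e_d$ of $\mathbb{Z}^d$ with $e_1,\dots,e_{p-1}$ spanning $\mathbb{Z}^d \cap H_\tau$, and the associated multiplicative coordinates $z_1,\dots,z_d$ on $(\mathbb{C}^*)^d$. For each $\sigma \supset \tau$ the lattice $\mathbb{Z}^d \cap H_\sigma$ is generated by $e_1,\dots,e_{p-1}$ and a primitive vector $v_\sigma$ pointing into $\sigma$, whose class in $\mathbb{Z}^d/(\mathbb{Z}^d \cap H_\tau)$ is the outward generator $u_{\sigma/\tau}$. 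Writing $\mathbbm{1}_{\Log^{-1}(\sigma)}\mathscr{T}_{\mathrm{aff}(\sigma)}$ near $\Log^{-1}(\tau)$ as an iterated integral along the fibres of $\pi_{\mathrm{aff}(\sigma)}$ and differentiating, the boundary piece takes the shape of an integration current along $\pi_\tau$-fibres multiplied by a one-dimensional ``delta'' in the normal direction represented by $v_\sigma$. Up to a common factor that depends only on $\tau$ (and not on $\sigma$), this produces a current of the form $w_\sigma\,\mathscr{T}_{\mathrm{aff}(\tau)}^{\mathrm{loc}} \otimes [u_{\sigma/\tau}]$, where $[u_{\sigma/\tau}]$ records the vector $u_{\sigma/\tau}\in \mathbb{Z}^d/(\mathbb{Z}^d \cap H_\tau)$ in the transverse direction.

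Summing over all $\sigma \supset \tau$, the common base factor $\mathscr{T}_{\mathrm{aff}(\tau)}^{\mathrm{loc}}$ pulls out and the total boundary of $\mathscr{T}_{\mathcal{C}}$ near $\Log^{-1}(\tau)$ is proportional to
\[
\Bigl(\,\sum_{\sigma \supset \tau} w_\sigma\, u_{\sigma/\tau}\Bigr) \in \mathbb{Z}^d/(\mathbb{Z}^d \cap H_\tau),
\]
the proportionality being by a nowhere-vanishing current. Since the $\Log^{-1}(\tau)$'s for different codimension-one faces $\tau$ are disjoint away from lower-dimensional skeleta, the vanishing of $d\mathscr{T}_{\mathcal{C}}$ is equivalent to the vanishing of this sum for \emph{every} $\tau$, which is exactly Definition \ref{BalancingCondition}. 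This gives both implications simultaneously.

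The main obstacle will be the careful bookkeeping in the middle step: the several top cells $\sigma \supset \tau$ have distinct slopes $H_\sigma$, and one must verify that their individual boundary contributions, after restriction to $\Log^{-1}(\tau)$, all land in a single common current supported on $\Log^{-1}(\tau)$ so that the cancellation takes place coordinate-wise in $\mathbb{Z}^d/(\mathbb{Z}^d \cap H_\tau)$. Concretely this requires a Fubini-type decomposition of the Haar measure on $S_{\mathbb{Z}^d/(\mathbb{Z}^d \cap H_\sigma)}$ under the surjection to $S_{\mathbb{Z}^d/(\mathbb{Z}^d \cap H_\tau)}$, and the identification of the transverse ``delta'' with precisely the primitive vector $v_\sigma$ with respect to the chosen adapted basis; doing this with correct orientations and lattice indices is the technical heart of the argument.
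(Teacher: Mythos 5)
The paper does not prove this statement itself---it is quoted from \cite{Babaee}---and your proposal follows essentially the same route as the proof given there: compute $d\bigl(\mathbbm{1}_{\Log^{-1}(\sigma)}\mathscr{T}_{\mathrm{aff}(\sigma)}\bigr)$ fibrewise by Stokes on the polyhedral domains $\Log^{-1}(\sigma)\cap\pi_{\mathrm{aff}(\sigma)}^{-1}(x)$, decompose the Haar measure along $S_{\mathbb{Z}^d/(\mathbb{Z}^d\cap H_\sigma)}\to S_{\mathbb{Z}^d/(\mathbb{Z}^d\cap H_\tau)}$, and identify the total facet contribution at $\tau$ with an injective linear image of $\sum_{\sigma\supset\tau} w_\sigma u_{\sigma/\tau}$ in $\mathbb{Z}^d/(\mathbb{Z}^d\cap H_\tau)$. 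Your outline is correct, and you have rightly isolated the genuine technical work (the Fubini decomposition of the Haar measure and the lattice/orientation bookkeeping identifying the transverse direction with the primitive vector $u_{\sigma/\tau}$).
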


\begin{theorem}[\cite{Babaee}]\label{thm:hyper-ddc}
Any tropical current \(\sT_{\cC} \in \sD_{d-1}((\C^*)^d)\) is of the form \(dd^c [\fq \circ \Log]\), where \(\fq : \R^d \to \R\) is a tropical Laurent polynomial; that is,
\[
\fq(x) = \max_{\alpha \in A} \{ c_{\alpha} + \langle \alpha, x \rangle \},
\]
for some finite subset \(A \subseteq \Z^d\) and coefficients \(c_{\alpha} \in \R\).
\end{theorem}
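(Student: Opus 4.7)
The plan is to reduce the statement to a computation of $dd^c$ of a max of pluriharmonic functions. First, I would invoke the classical tropical-geometry theorem (see, e.g., Maclagan--Sturmfels, Theorem~3.3.5) that every balanced rational weighted polyhedral complex of pure codimension one in $\R^d$ is the \emph{corner locus} of some tropical Laurent polynomial $\fq(x)=\max_{\alpha\in A}\{c_\alpha+\langle\alpha,x\rangle\}$, with the weight $w_\sigma$ on each top-dimensional cell $\sigma$ equal to the lattice length of $\alpha_+-\alpha_-\in\Z^d/(\Z^d\cap H_\sigma)$, where $\alpha_\pm$ are the two exponents of $\fq$ realising the maximum on the two sides of $\sigma$. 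This reduces the theorem to showing $dd^c[\fq\circ\Log]=\sT_{\mathcal C}$ when $\cC$ is the tropical hypersurface of $\fq$. Note that $\fq\circ\Log$ is continuous plurisubharmonic on $(\C^*)^d$, being the maximum of the pluriharmonic functions $c_\alpha-\log|z^\alpha|$, so $dd^c[\fq\circ\Log]$ is an honest positive closed $(1,1)$-current, supported in $\Log^{-1}(|\cC|)$. Both this current and $\sT_{\cC}$ are invariant under the compact torus $S_N=(S^1)^d$.

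The heart of the argument is a local calculation on each top-dimensional cell. Fix $\sigma$ and a point $x_0\in\mathrm{int}(\sigma)$; on a small neighbourhood of $\Log^{-1}(x_0)$ only the two pieces $L_\pm(x)=c_\pm+\langle\alpha_\pm,x\rangle$ are active, so
\[
\fq\circ\Log \;=\; L_-\circ\Log \;+\; \max\!\bigl(-\log|e^{-c}z^m|,\,0\bigr),
\qquad m:=\alpha_+-\alpha_-,\ \ c:=c_+-c_-.
\]
Since $L_-\circ\Log$ is pluriharmonic, $dd^c[\fq\circ\Log]=dd^c\,\log^+\!\bigl(e^{c}/|z^m|\bigr)$ locally. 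The standard one-variable identity $dd^c\log^+|w|=[S^1\subset\C^*]$ (the normalised Haar current on the unit circle) pulled back under the monomial map $\chi_m:(\C^*)^d\to\C^*$, $z\mapsto z^m$, gives the current of integration over $\{|z^m|=e^{c}\}$ with the appropriate lattice multiplicity. Writing $m=k\,m'$ with $m'\in\Z^d$ primitive, one checks that $k$ is precisely the lattice length of $m$ in $\Z^d/(\Z^d\cap H_\sigma)$, hence $k=w_\sigma$ by the construction of $\fq$. Comparing with Definition~\ref{def:T-H}, the Haar averaging used to define $\sT_{\aff(\sigma)}$ is the same Haar measure appearing after pulling back by $\chi_m$, because $m$ generates the rank-one quotient $\Z^d/(\Z^d\cap H_\sigma)$ up to the factor $k$. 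Thus, on $\Log^{-1}(\mathrm{int}\,\sigma)$,
\[
dd^c[\fq\circ\Log]\;=\;w_\sigma\,\sT_{\aff(\sigma)}|_{\Log^{-1}(\mathrm{int}\,\sigma)}\;=\;\sT_{\cC}|_{\Log^{-1}(\mathrm{int}\,\sigma)}.
\]

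Finally, the two positive closed $(1,1)$-currents $dd^c[\fq\circ\Log]$ and $\sT_\cC$ coincide on the union of the open pieces $\Log^{-1}(\mathrm{int}\,\sigma)$ over all top-dimensional cells, whose complement inside $\Log^{-1}(|\cC|)$ is $\Log^{-1}(|\cC|^{(d-2)})$, a set of real Hausdorff dimension at most $2d-3$. A standard extension result for closed positive $(1,1)$-currents (the support theorem; see, e.g., Demailly) then forces the two currents to coincide globally on $(\C^*)^d$, which is the desired identity. The main technical obstacle is the lattice bookkeeping in the local step: one must match the integer $k$ produced by the monomial map $\chi_m$ with the weight $w_\sigma$ supplied by the corner-locus construction, and simultaneously check that the Haar normalisation in Definition~\ref{def:T-H} (based on the quotient $\Z^d/(H_\sigma\cap\Z^d)$) is the one picked up by $\chi_m^*[S^1]$. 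Everything else is a routine use of continuity of $\fq\circ\Log$, pluriharmonicity of the inactive pieces, and the balancing of $\cC$ (which in fact is implicitly used only to guarantee the existence of the tropical polynomial $\fq$ in the first reduction).
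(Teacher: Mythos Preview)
The paper does not actually contain a proof of this theorem: it is stated with a bare citation to \cite{Babaee} and no argument is given in the present manuscript. So there is no ``paper's own proof'' to compare your proposal against.

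That said, your outline is a correct and standard route to the result, and it is essentially the argument one finds in the cited source. The three steps --- (i) invoke the structure theorem that every balanced codimension-one weighted rational complex is the corner locus of a tropical Laurent polynomial with the lattice-length weights, (ii) perform the local $dd^c$ computation on $\Log^{-1}(\mathrm{int}\,\sigma)$ by reducing to the one-variable identity $dd^c\log^+|w|=d\mu_{S^1}$ pulled back along $z\mapsto z^m$, and (iii) glue using the first support theorem across the codimension-two skeleton --- are exactly the right ingredients. Your bookkeeping remark is on point: the only place one can slip is in matching the factor $k$ from $m=k\,m'$ with the weight $w_\sigma$ and with the Haar normalisation in Definition~\ref{def:T-H}; once that is checked, the rest is routine. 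One cosmetic comment: in step (iii) you do not need the full strength of the support theorem --- since both currents are closed and positive and their difference is a closed $(1,1)$-current supported on a set of Hausdorff $(2d-2)$-measure zero, Federer's flatness/support argument (or Demailly's first theorem of support) applies directly.
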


\begin{remark}\label{rem:trop-hyper-def}
Note that the support of \(dd^c [\fq \circ \Log]\) is given by \(\Log^{-1}(\Trop(\fq))\), where \(\Trop(\fq)\) is the set of points \(x \in \R^d\) where \(\fq\) is not smooth. This set can be balanced with natural weights, which coincide with the weights of the closed current \(dd^c [\fq \circ \Log]\), and is called the tropical variety associated to \(\fq\). We can then write
\[
\sT_{\Trop(\fq)} = dd^c [\fq \circ \Log].
\]
For example, the reader can verify that in Example~\ref{eg:stable-int}, the tropical curves in the figure correspond to the singular loci of \(\max\{0, x_2\}\) and \(\max\{x_2, 2x_1, 0\}\). The weights are essentially the integer side lengths of the corresponding Newton polytopes. See \cite{Maclagan-Sturmfels} for details.
\end{remark}

\begin{proposition}[\cite{Dyn-trop}*{Proposition 4.6}]
Assume that \(\sT \in \cD_{p}((\C^*)^d)\) is a closed, positive, \((S^1)^d\)-invariant current whose support is given by \(\Log^{-1}(|\cC|)\), for a polyhedral complex \(\cC \subseteq \R^d\) of pure dimension \(p\). Then \(\sT\) is a tropical current. In particulare, there are natural induced weights on $\cC$ to make it balanced.
\end{proposition}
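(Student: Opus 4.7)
The plan is to prove two local statements that together force $\sT$ to be a tropical current. First, I will show that on $\Log^{-1}(\mathrm{int}(\sigma))$ for each top-dimensional cell $\sigma \in \cC$, the current $\sT$ equals $w_\sigma \cdot \sT_{\mathrm{aff}(\sigma)}$ for some constant $w_\sigma \geq 0$. Second, closedness of $\sT$ near a codimension-one cell $\tau$ will yield the balancing condition on the weights $w_{\sigma_i}$ at the top-dim cells $\sigma_i \supset \tau$. Together these give $\sT = \sum_\sigma w_\sigma \mathbbm{1}_{\Log^{-1}(\sigma)} \sT_{\mathrm{aff}(\sigma)} = \sT_\cC$ with $\cC$ balanced.

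For the first statement, I apply a monomial automorphism of $(\C^*)^d$ arising from $\mathrm{GL}_d(\Z) \ltimes (\C^*)^d$ to reduce to the model case $\mathrm{aff}(\sigma) = \R^p \times \{0\}^{d-p}$, so that $\Log^{-1}(\mathrm{int}(\sigma)) \subseteq (\C^*)^p \times (S^1)^{d-p}$. In this model, by $(S^1)^d$-invariance only the $(S^1)^d$-invariant part of any test $(p,p)$-form $\omega$ contributes to $\langle \sT, \omega \rangle$; the invariant $(p,p)$-forms are spanned by $f_I(|z|) \, dz_I \wedge d\bar z_I$ with $|I| = p$. A direct pullback computation shows that for $j > p$ the form $dz_j \wedge d\bar z_j$ restricts to zero on the submanifold $|z_j| = 1$, so only the multi-index $I = \{1, \ldots, p\}$ survives on the support. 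Thus $\langle \sT, \omega \rangle$ depends on $\omega$ solely through the scalar coefficient of $dz_1 \wedge d\bar z_1 \wedge \cdots \wedge dz_p \wedge d\bar z_p$, integrated against a positive $(S^1)^d$-invariant Borel measure on $\Log^{-1}(\mathrm{int}(\sigma))$. Pushing this measure forward by $\Log$ yields a positive measure $\nu$ on $\mathrm{int}(\sigma) \subseteq \R^p$; writing $\nu = w(x) \, dx_1 \cdots dx_p$ identifies $\sT$ with $(w \circ \Log) \cdot \sT_{\mathrm{aff}(\sigma)}$ on $\Log^{-1}(\mathrm{int}(\sigma))$. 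Closedness of $\sT$ then forces $w$ to be constant: a Stokes-type computation on each slice $(\C^*)^p \times \{x\}$ reduces $d(w \sT_{\mathrm{aff}(\sigma)}) = 0$ to $dw = 0$ on the slice, and connectedness of $\mathrm{int}(\sigma)$ yields a single constant $w = w_\sigma$.

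For the balancing step, I work locally near a generic point of $\Log^{-1}(\mathrm{int}(\tau))$. Another monomial change of variables brings $\mathrm{aff}(\tau) = \R^{p-1} \times \{0\}^{d-p+1}$, so the top-dim cells $\sigma_i \supset \tau$ correspond to rays in $\R^{d-p+1}$ with primitive integer generators $u_{\sigma_i/\tau}$. By the first statement, on this neighborhood $\sT = \sum_i w_{\sigma_i} \sT_{\mathrm{aff}(\sigma_i)}$. Testing $d\sT = 0$ against forms of the shape $\chi(z) \cdot \prod_{j=1}^{p-1} d\log|z_j| \wedge d\arg(z_j)$ for appropriate bumps $\chi$ supported near $\Log^{-1}(\mathrm{int}(\tau))$ and computing the contributions using the local calculus underlying Theorem \ref{thm:hyper-ddc} reduces closedness at $\tau$ exactly to the identity $\sum_i w_{\sigma_i} u_{\sigma_i/\tau} = 0$ in $\Z^d / (\Z^d \cap H_\tau)$. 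Positivity $w_{\sigma_i} \geq 0$ is immediate from $\sT \geq 0$, and the hypothesis $\supp{\sT} = \Log^{-1}(|\cC|)$ forces each $w_{\sigma_i} > 0$.

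The main obstacle is the structural identification of $\sT$ with $(w \circ \Log) \cdot \sT_{\mathrm{aff}(\sigma)}$ on $\Log^{-1}(\mathrm{int}(\sigma))$. A priori, a current supported on a real submanifold can carry distributional normal-derivative components beyond a genuine measure, and the bidegree $(p,p)$ constraint alone does not exclude these. The decisive input is positivity combined with $(S^1)^d$-invariance: together they forbid $\delta'$-type components, since pairing with the invariant test forms identified above must yield a nonnegative scalar. Once this rigidity is established, the remainder of the argument is a concrete local computation mirroring the classical $dd^c$-calculus for tropical Laurent polynomials.
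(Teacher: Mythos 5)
Your overall architecture (identify $\sT$ on $\Log^{-1}$ of the open top-dimensional cells as a constant multiple of $\sT_{\aff(\sigma)}$, then extract the balancing condition from $d\sT=0$ across codimension-one cells) is the standard one and matches the proof strategy of the cited result; the balancing step is essentially Theorem \ref{thm:closed-balanced}. The genuine gap is in the structural step, and it lies exactly where you locate "the main obstacle": you claim that positivity together with $(S^1)^d$-invariance is the decisive input forcing $\sT$ to be a superposition of the complex leaves $(\C^*)^p\times\{x\}$. This is false. In the model case $M=(\C^*)^p\times (S^1)^{d-p}\subseteq(\C^*)^d$, the current $\sT=\mu\otimes i\,\partial_{z_{p+1}}\wedge\partial_{\bar z_{p+1}}\wedge\cdots$ (i.e.\ a positive measure $\mu$ on $M$ placed on a single diagonal coefficient $T_{I,I}$ with $I\not\subseteq\{1,\dots,p\}$) is positive, $(S^1)^d$-invariant when $\mu$ is, of bidimension $(p,p)$, and supported on $M$ --- yet it is not a superposition of leaves. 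Positivity only gives you order zero (measure coefficients), which rules out $\delta'$-type terms but does not kill the coefficients transverse to the CR structure. What kills them is \emph{normality} ($\sT$ closed), via the contraction lemma for normal currents supported on a submanifold ($\alpha\wedge\sT=0$ for conormal $1$-forms $\alpha$) and then Demailly's second theorem of support, which is precisely the tool this paper invokes for the analogous structural claim in Lemma \ref{lem:radon}. Your argument that "$dz_j\wedge d\bar z_j$ restricts to zero on $|z_j|=1$" is a statement about the pullback of forms to $M$; transferring it to the pairing with a current merely \emph{supported} on $M$ is exactly the content of those support theorems and cannot be bypassed by positivity.

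A second, smaller gap: your claimed spanning set of $(S^1)^d$-invariant $(p,p)$-forms, $f_I(|z|)\,dz_I\wedge d\bar z_I$, is incomplete. The forms $f(|z|)\,\dfrac{dz_I\wedge d\bar z_J}{z_I\,\bar z_J}$ with $|I|=|J|=p$ and $I\neq J$ are also $(S^1)^d$-invariant, so determining $\sT$ by its action on the diagonal forms alone does not determine $\sT$ unless you separately show the off-diagonal coefficient measures $T_{I,J}$ vanish (again a consequence of the support theorem for normal currents, or of a Cauchy--Schwarz argument once all but one diagonal coefficient is shown to vanish). Once the structural step is repaired --- i.e.\ once you have $\sT\rest{\Log^{-1}(\sigma^{\circ})}=\int [\pi_{\aff(\sigma)}^{-1}(x)]\,d\mu_\sigma(x)$ for some positive transverse measure $\mu_\sigma$ on $S_{\Z^d/(H_\sigma\cap\Z^d)}$ --- the rest of your argument goes through: $(S^1)^d$-invariance forces $\mu_\sigma$ to be a multiple $w_\sigma$ of Haar measure (the compact torus acts transitively on the transversal), positivity and the support hypothesis give $w_\sigma>0$, and closedness across codimension-one cells yields the balancing condition.
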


    

\section{Continuity of Superpotentials}

Let $\fq:\R^d \lto \R,$ be a tropical polynomial function, and $\Log: (\C^*)^d \lto \R^d,$ as before. The current $dd^c [\fq \circ \Log] \in \sD_{d-1} ((\C^*)^d)$ has a bounded potential on any relatively compact open set, and by Bedford--Taylor theory, for any positive closed current $\sT \in \sD_{p} (\C^*)^d,$  the product 
$$
dd^c [ \fq \circ \Log] \wedge \sT = dd^c ([\fq\circ \Log] \, \sT),
$$
is well-defined. See \cite{DemaillyBook1}*{Section III.3}. In higher codimensions though, to prove that any two tropical currents have a well-defined wedge product, we utilise the superpotential theory \cite{Dinh-Sibony:superpot} on a compact K\"ahler manifold, and as a result, we extend the tropical currents to smooth compact toric varieties. 

\subsection{Tropical Currents on Toric Varieties}
In a toric variety $X_{\Sigma},$ for a cone $\sigma\in \Sigma,$ we denote by $\cO_{\sigma},$ the toric orbit associated with $\sigma.$ We have 
$$
X_{\Sigma} = \bigcup_{\sigma \in \Sigma} \cO_{\sigma}.
$$
We also set $D_{\sigma}$ to be the closure of $\cO_{\sigma}$ in the $X_{\Sigma},$ and $\Sigma(p)$  the $p$-dimensional skeleton of $\Sigma$, that is, the union of $p$-dimensional cells of $\Sigma$. Fibres of tropical currents are algebraic varieties with finite degrees and can be extended by zero to any toric variety, in consequence, any tropical current can be extended by zero to toric varieties. Moreover, with the following compatibility condition, we can ask for the extension of the fibres to intersect the toric invariant divisors transversally. 
\begin{definition}
\begin{itemize}
    \item [(a)] For a polyhedron $\sigma,$ its \emph{recession cone} is the convex polyhedral cone 
    $$
 \textrm{rec}(\sigma) = \{ b\in \R^d: \sigma+b \subseteq \sigma\} \subseteq H_{\sigma}.
    $$
       \item [(b)] Let $\cC$ be a $p$-dimensional balanced weighted complex in $\R^d,$ and $\Sigma$ a $p$-dimensional fan. We say that $\cC$ is \emph{compatible} with $\Sigma$, if $\textrm{rec}(\sigma) \in \Sigma$ for all $\sigma \in \cC.$
       
       \item [(c)] We say the tropical current $\sT_{\cC}$ is \emph{compatible} with $X_{\Sigma},$ if all the closures of the fibers $\pi_{\text{aff}(\sigma)}^{-1}(x)$ in $X_{\Sigma}$ of $\sT_{\cC}$ intersect the torus invariant divisors of $X_{\Sigma}$ transversely. 
\end{itemize}
\end{definition}

\begin{theorem}[\cite{BH}*{Lemma 4.10}]\label{thm:transverse-fiber}
  Let $\cC$ be a $p$-dimensional tropical cycle and $\Sigma$ be a fan. Assume that $\sigma \in \cC$ is a $p$-dimensional polyhedron and $\rho \in \Sigma$ is a one-dimensional cone. Then 
  \begin{itemize}
      \item [(a)] The intersection $D_{\rho} \cap \overline{\pi^{-1}_{\textrm{aff}(\sigma)}(x)}$ is non-empty and transverse if and only if $\rho \in \textrm{rec}(\sigma).$ Here $\overline{\pi^{-1}_{\textrm{aff}(\sigma)}(x)}$ corresponds the closure of a fiber of $\sT_{\text{aff}(\sigma)}$ in the toric variety $X_{\Sigma}.$ 
  
      \item [(b)] In particular, $\cC$ is compatible with $\Sigma$ if and only if $\sT_{\cC}$ is compatible with $X_{\Sigma}.$
       \end{itemize}     
\end{theorem}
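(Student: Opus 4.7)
The plan is to do a local toric computation in a smooth affine chart near $D_\rho$ using a $\Z$-basis of $\Z^d$ adapted simultaneously to $\rho$ and to the direction space $H_\sigma$, and then read off non-emptiness and transversality directly from the resulting monomial equations.

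Suppose first that $v_\rho\in\rec(\sigma)\subseteq H_\sigma\cap\Z^d$. Since $X_\Sigma$ is smooth, $v_\rho$ is primitive in $\Z^d$ and extends to a $\Z$-basis of $\Z^d$; I would refine this choice so that the first $p$ vectors form a $\Z$-basis of the saturated sublattice $H_\sigma\cap\Z^d$, which is possible because $v_\rho$ is primitive and $H_\sigma\cap\Z^d$ is a direct summand of $\Z^d$. In the resulting coordinates on $(\C^*)^d$, the subtorus translate $\pi^{-1}_{\aff(\sigma)}(x)=T_{H_\sigma\cap\Z^d}\cdot x_0$ takes the form $\{z_{p+1}=c_1,\dots,z_d=c_{d-p}\}$ for some $c_j\in\C^*$. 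In the affine toric chart $U_\rho\cong\C\times(\C^*)^{d-1}$ of $X_\Sigma$ one has $D_\rho=\{x_1=0\}$, and the closure of the fiber in $U_\rho$ is cut out by the same equations, none of which involves $x_1$; hence the intersection with $D_\rho$ is the nonempty $(p-1)$-dimensional subvariety $\{(0,x_2,\dots,x_p,c_1,\dots,c_{d-p}): x_j\in\C^*\}\subset\cO_\rho$, and the differentials $dx_1,dx_{p+1},\dots,dx_d$ are linearly independent at every point, giving transversality.

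For the converse in (a), the key observation is that the fiber of the laminar current $\sT_\cC$ attached to $\sigma$ has Log-image equal to $\sigma$, not to $\aff(\sigma)$. In the chart $U_\rho$ the coordinate $-\log|x_1|$ records the $v_\rho$-component of $\Log(z)$, so the closure of the restricted fiber can reach $D_\rho=\{x_1=0\}$ only if that component is unbounded above on $\sigma$, which is exactly the condition $v_\rho\in\rec(\sigma)$. When $v_\rho\notin\rec(\sigma)$ the projection of $\sigma$ onto $\R v_\rho$ is bounded above, so $|x_1|$ stays bounded away from $0$ on the fiber and its closure misses $D_\rho$. Part (b) is then a direct consequence of (a): if every fiber closure meets every invariant divisor transversely, then for each $\sigma\in\cC$ the rays of $\Sigma$ lying in $\rec(\sigma)$ are exactly the rays reached by that fiber, which forces $\rec(\sigma)$ to be a union of cones of $\Sigma$; the reverse implication is immediate from (a) applied to each pair $(\sigma,\rho)$.

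The main obstacle I anticipate is reconciling the algebraic and laminar viewpoints. The algebraic closure of the subtorus translate $\pi^{-1}_{\aff(\sigma)}(x)$ in $X_\Sigma$ already meets $D_\rho$ whenever $v_\rho\in H_\sigma$, which is in general strictly weaker than $v_\rho\in\rec(\sigma)$. The statement of (a) must therefore be read in terms of the fiber of the laminar current, i.e., after restricting to $\Log^{-1}(\sigma)$, and one must verify that the semi-analytic closure of this restricted fiber has a well-defined tangent cone at each boundary point of $D_\rho$ coinciding with the tangent space of the ambient algebraic closure, so that the transversality claim of (a) is unambiguous and the monomial normal form above gives a genuine proof in both directions.
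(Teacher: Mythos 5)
The paper itself offers no proof of this statement (it is imported from \cite{BH}*{Lemma 4.10}), so your argument has to stand on its own. Your forward direction is essentially right, and you correctly isolate the key subtlety: the fibre must be read as the laminar fibre restricted to $\Log^{-1}(\sigma)$, since the closure of the full subtorus translate already meets $D_\rho$ whenever $v_\rho\in H_\sigma$.

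The converse direction, however, has a genuine gap. You assert that if $v_\rho\notin\rec(\sigma)$ then the $v_\rho$-component of $\Log$ (i.e.\ $-\log|x_1|=\langle m_1,\Log(z)\rangle$ with $m_1$ dual to $v_\rho$ in your adapted basis) is bounded above on $\sigma$, so that $|x_1|$ stays away from $0$. This is false: writing $\sigma=P+\rec(\sigma)$ with $P$ a polytope, $\langle m_1,\cdot\rangle$ is unbounded above on $\sigma$ as soon as it is positive somewhere on $\rec(\sigma)$, which can happen with $v_\rho\notin\rec(\sigma)$. For instance, take $d=3$, $\sigma=\{(s,t,0): s\ge 0,\ 0\le t\le 1\}$, so $\rec(\sigma)=\R_{\ge 0}(1,0,0)$, and $v_\rho=(1,1,0)\in H_\sigma$: on the restricted fibre one has $|x_1|=|z_1|\to 0$, yet the closure still misses $\cO_\rho$ because the transverse chart coordinate $|x_2|=|z_2/z_1|\to\infty$, i.e.\ the points escape $U_\rho$ instead of converging into $\cO_\rho$. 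The correct argument is that a sequence in the restricted fibre converges to a point of $\cO_\rho$ only if $\Log(z_k)=c_k v_\rho+b_k$ with $c_k\to\infty$ and $b_k$ bounded, and the existence of such points in the closed convex set $\sigma$ forces $v_\rho\in\rec(\sigma)$. A second, related omission: your chart computation only sees $D_\rho\cap U_\rho=\cO_\rho$, whereas $D_\rho$ also contains the deeper orbits $\cO_\tau$ for $\tau\supsetneq\rho$; when $\rho\not\subseteq\rec(\sigma)$ but the relative interior of some such $\tau$ meets $\rec(\sigma)$, the fibre closure does meet $D_\rho$ along $\cO_\tau$, and one must check that this intersection has dimension strictly less than $p-1$ (hence is not transverse) for the stated equivalence to survive. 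The same orbit-by-orbit analysis is what is really needed for (b): compatibility of $\cC$ with $\Sigma$ requires $\rec(\sigma)\in\Sigma$, which does not follow merely from knowing which rays of $\Sigma$ lie in $\rec(\sigma)$.
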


For a tropical current $\sT_{\cC} \in \sD_{p}((\C^*)^d),$ and given a toric variety $X_\Sigma$ we denote its extension by zero $\overline{\sT}_{\cC} \in \sD_{p}(X_{\Sigma}).$
\begin{proposition}
For every tropical variety \(\cC\), there exists a smooth projective toric fan \(\Sigma\) compatible with a subdivision of \(\cC\).
\end{proposition}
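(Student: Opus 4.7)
The plan is to first construct a smooth projective complete rational fan $\Sigma$ whose cones refine the recession cones of all cells of $\cC$, and then refine $\cC$ itself so that the recession cone of each new cell lies in $\Sigma$.

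First I would collect the finitely many rational polyhedral recession cones $\{\rec(\sigma) : \sigma \in \cC\}$ and form a common refinement into a rational fan $\Sigma_0$ whose support equals $\bigcup_{\sigma \in \cC} \rec(\sigma)$. Next I would extend $\Sigma_0$ to a complete rational fan covering $\R^d$ by triangulating the complement of its support with rational cones. Then I would invoke the classical toric refinement theorems: every complete rational fan admits a simplicial refinement by star subdivisions of rays, then a smooth (unimodular) refinement by further star subdivisions of non-smooth cones, and finally a projective refinement (the toric analogue of Chow's lemma). The output is a smooth projective fan $\Sigma$ refining $\Sigma_0$, with the property that every $\rec(\sigma)$ for $\sigma \in \cC$ is a union of cones of $\Sigma$.

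To subdivide $\cC$, for each cell $\sigma \in \cC$ I would use the Minkowski decomposition $\sigma = P_\sigma + \rec(\sigma)$ with $P_\sigma$ a rational polytope. Since $\rec(\sigma)$ is a union of cones of $\Sigma$, one can chop $\sigma$ into rational subpolyhedra whose recession cones are exactly those $\Sigma$-cones. To carry this out coherently across all cells of $\cC$, one refines $\cC$ globally via its Minkowski-sum refinement by $\Sigma$ (after fixing a common basepoint); this yields a subdivision $\cC'$ whose cells have recession cones in $\Sigma$. The weights on $\cC'$ are inherited from $\cC$, and balancedness is preserved under polyhedral refinement, so $\cC'$ is still a tropical variety compatible with $\Sigma$ in the sense of the definition above.

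The main obstacle is the coherent subdivision step: the cuts on each cell $\sigma$ must fit together along common faces into a genuine polyhedral complex, rather than forming a disjoint collection of independent refinements. Adopting a single global procedure governed entirely by the ambient fan $\Sigma$ makes agreement on shared faces automatic, but some bookkeeping is needed. The smoothness and projectivity in the second step are classical but nontrivial results — projectivity in particular relies on a toric Chow-type lemma — so I would invoke them from the standard references rather than reprove them here.
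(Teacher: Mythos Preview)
Your proposal is correct and follows essentially the same route as the paper: build a complete fan compatible with the recession structure, refine it to a smooth projective fan via the toric Chow lemma and toric resolution of singularities, then subdivide $\cC$ to match. The only difference is that the paper outsources your first and last steps (finding a complete fan $\Sigma_1$ and a refinement $\cC'$ with $\rec(\sigma')\in\Sigma_1$ for all $\sigma'\in\cC'$, and then passing to the induced refinement $\cC''$) to a reference of Gil--Sombra, whereas you sketch this construction by hand via recession cones and Minkowski decompositions; the ``coherent subdivision'' bookkeeping you flag as the main obstacle is precisely what that citation absorbs.
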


\begin{proof}
By \cite{Gil-Sombra}, for \(\cC\) there exists a refinement \(\cC'\), and a complete fan \(\Sigma_1 \subseteq \R^d\) such that \(\cC'\) is compatible with \(\Sigma_1\). Applying the toric Chow lemma \cite{Cox-Little-Schenck}*{Theorem~6.1.18} and the toric resolution of singularities \cite{Cox-Little-Schenck}*{Theorem~11.1.9}, we can find a fan \(\Sigma\) which is a refinement of \(\Sigma_1\), and which defines a smooth projective variety \(X_{\Sigma}\). The tropical variety \(\cC''\), which is the refinement of \(\cC'\) induced by \(\Sigma\), satisfies the statement.
\end{proof}

\begin{remark}
    When $\cC'$ is a refinement of a tropical variety $\cC,$ then $\cC'$ is a tropical variety with natural induced weights. It is also easy to check that we have the equality of currents $\sT_{\cC} = \sT_{\cC'}$ in $(\C^*)^d;$ see \cite{BH}*{Section 2.6}. 

\end{remark}

\begin{lemma} \label{lem:ext-hyper-toric}
Let \( \fq: \mathbb{R}^d \to \mathbb{R} \) be a tropical Laurent polynomial, and let \( X_{\Sigma} \) be a smooth projective toric variety compatible with a subdivision of \( \Trop(\fq) \). Let \( \rho \in \Sigma(1) \). Assume that \( \zeta_0 \in D_{\rho} \cap \operatorname{supp} \left( \overline{dd^c[\fq \circ \Log]} \right) \), and let \( \Omega \) be a sufficiently small neighborhood of \( \zeta_0 \). Then the function \( \fq \circ \Log \in \mathrm{PSH}(\Omega \setminus D_{\rho}) \cap \mathscr{C}^0(\Omega \setminus D_{\rho}) \) can be extended to a function 
\[
u : \Omega \to \mathbb{R} \cup \{ +\infty \},
\]
such that:
\begin{itemize}
    \item[(a)] On \( \Omega \), we have \( u = g + \kappa \log |f| \), where \( g \) is a continuous function, \( f \) is a local defining equation for \( D_{\rho} \), and \( \kappa \) is a negative integer.  
    \item[(b)] On \( \Omega \), the equality \( dd^c u = \overline{\mathscr{T}}_{\Trop(\fq)} + \kappa [D_{\rho}] \) holds.
    \item[(c)] On \( \Omega \), we have \( \overline{\mathscr{T}}_{\mathcal{C}} = dd^c g \). In particular, \( \overline{\mathscr{T}}_{\Trop(\fq)} \) admits a continuous superpotential.
\end{itemize}
\end{lemma}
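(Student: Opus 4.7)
The plan is to reduce the claim to an explicit computation in a toric chart near $\zeta_0$, extracting from the piecewise-linear structure of $\fq$ the dominant logarithmic singularity along $D_\rho$ and showing that the remainder is continuous. Once this local decomposition is in place, parts (a), (b), (c) will follow from Poincar\'e--Lelong together with a support argument for closed $(1,1)$-currents.

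First I would fix local coordinates. Since $\zeta_0 \in D_\rho$ and $X_\Sigma$ is smooth, choose a maximal cone $\sigma$ of $\Sigma$ containing $\rho$ such that $\zeta_0$ lies in the affine chart $U_\sigma$, and shrink $\Omega$ to sit inside $U_\sigma$. Smoothness of $X_\Sigma$ gives a $\Z$-basis $u_{\rho_1} = u_\rho, u_{\rho_2}, \dots, u_{\rho_d}$ of $\Z^d$ generating $\sigma$; the dual basis $m_1, \dots, m_d$ then provides coordinates $w_i := z^{m_i}$ on $U_\sigma$, with $D_\rho = \{w_1 = 0\}$, so I set $f := w_1$. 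Using $-\log|z_k| = \sum_i (u_{\rho_i})_k (-\log|w_i|)$ I rewrite
\[
\fq \circ \Log \;=\; \max_{\alpha\in A}\Bigl\{\, c_\alpha - \sum_{i=1}^d \langle\alpha, u_{\rho_i}\rangle \log|w_i|\,\Bigr\}.
\]
Setting $\lambda := \max_{\alpha\in A}\langle\alpha, u_\rho\rangle \in \Z$, $A^* := \{\alpha \in A : \langle\alpha, u_\rho\rangle = \lambda\}$, and $\kappa := -\lambda$, the assumption $\zeta_0 \in D_\rho \cap \supp{\overline{dd^c[\fq\circ\Log]}}$ combined with the compatibility of $\Sigma$ with (a subdivision of) $\Trop(\fq)$ forces $\rho$ to be the recession ray of an unbounded cell of $\Trop(\fq)$, giving $|A^*| \geq 2$ and $\kappa$ a negative integer. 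Shrinking $\Omega$ further so that $|w_1|$ is small enough and the other $|w_i|$ stay in a compact subset of $\C^*$, the contributions from $\alpha \notin A^*$ become strictly dominated (their coefficient of $-\log|w_1|$ is strictly less than $\lambda$), and one obtains
\[
\fq\circ\Log \;=\; \kappa\log|f| + g(w), \qquad g(w) := \max_{\alpha\in A^*}\Bigl\{\, c_\alpha - \sum_{i \geq 2}\langle\alpha, u_{\rho_i}\rangle\log|w_i|\,\Bigr\},
\]
on $\Omega\cap(\C^*)^d$. The function $g$ is a finite maximum of continuous functions of $w_2,\dots,w_d$, hence continuous on $\Omega$, and $u := g + \kappa\log|f|$ verifies~(a).

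For~(b), Poincar\'e--Lelong gives $dd^c u = dd^c g + \kappa [D_\rho]$ as currents on $\Omega$. The hard part will be promoting the equality $dd^c g = \overline{\sT}_{\Trop(\fq)}$ from $\Omega\setminus D_\rho$ to all of $\Omega$; off $D_\rho$ this is immediate because $\log|f|$ is pluriharmonic there and $dd^c(\fq\circ\Log) = \sT_{\Trop(\fq)}$ by Theorem~\ref{thm:hyper-ddc}, so the difference is a closed real $(1,1)$-current supported on the smooth divisor $D_\rho$ and therefore of the form $c[D_\rho]$ by the support theorem for normal currents. To pin down $c = 0$ I would invoke Theorem~\ref{thm:transverse-fiber}: compatibility of $\Trop(\fq)$ with $\Sigma$ guarantees that, near $\zeta_0$, $\overline{\sT}_{\Trop(\fq)}$ is a finite sum of fibered integrals over closures of fibers meeting $D_\rho$ transversely; hence slicing by a generic holomorphic disc transverse to $D_\rho$ recovers precisely the multiplicity already carried by $dd^c g$ off $D_\rho$, without any extra Dirac contribution along $D_\rho$, forcing $c = 0$.

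Part~(c) is then immediate: $\overline{\sT}_{\Trop(\fq)} = dd^c g$ with $g$ continuous on $\Omega$ provides a continuous local potential near each $\zeta_0 \in D_\rho \cap \supp{\overline{\sT}_{\Trop(\fq)}}$, while on $(\C^*)^d$ the function $\fq\circ\Log$ itself is a (locally bounded) continuous local potential. By the standard $(1,1)$-theory of \cite{Dinh-Sibony:Kahler}, a closed positive $(1,1)$-current on the compact K\"ahler manifold $X_\Sigma$ admitting continuous local potentials everywhere has a continuous superpotential, which completes the plan.
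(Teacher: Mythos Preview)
Your proposal is correct and follows essentially the same route as the paper: both isolate the exponents $\alpha$ that dominate along the ray $\rho$, factor out the common term $\kappa\log|f|$, and check that the remainder $g$ is continuous. Your use of explicit toric chart coordinates $w_i=z^{m_i}$ is equivalent to the paper's appeal to the divisor formula $\mathrm{Div}(z^{\alpha})=\sum_{\rho}\langle\alpha,n_{\rho}\rangle D_{\rho}$; the only substantive difference is in pinning down $c=0$: the paper simply notes that compatibility forces $\overline{\sT}_{\Trop(\fq)}$ to carry no mass on $D_\rho$ (and $g$ continuous forces the same for $dd^c g$), which is quicker than your support--theorem--plus--slicing argument, though yours is also valid.
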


\begin{proof}
Assume that \(\fq = \max_{\alpha \in A} \{ c_{\alpha} + \langle \alpha, x \rangle \}\). Recall that
\[
\Log = (-\log |\cdot|, \dots, -\log |\cdot|).
\]
We write
\[
\fq \circ \Log = \log \max_{\alpha} \{ |e^{c_{\alpha}} z^{-\alpha}| \}.
\]
Assume that near \(\zeta_0\), the function \(\fq \circ \Log\) is given by
\[
\max \left\{ |e^{c_{\beta}} z^{-\beta}|,\, |e^{c_{\gamma}} z^{-\gamma}| \right\}.
\]
This implies that on \(\Log(\Omega \setminus D_{\rho})\), the function \(\fq\) is given by
\(\max \left\{ c_{\beta} + \langle \beta, x \rangle,\, c_{\gamma} + \langle \gamma, x \rangle \right\}.\)
For \(\fq = \max_{\alpha \in A} \{ c_{\alpha} + \langle \alpha, x \rangle \}\), we define:
\[
\mathrm{rec}(\fq) = \max_{\alpha \in A} \{ \langle \alpha, x \rangle \}.
\]
It is not hard to check that
\[
\mathrm{rec} \left( \Trop(\fq) \right) = \Trop \left( \mathrm{rec}(\fq) \right);
\]
see \cite{Maclagan-Sturmfels}*{Page 132}.

\vskip 2mm

We now show that by extending each \(z^{-\alpha}\) as a rational function to \(X_{\Sigma}\), the compatibility condition implies that \(\fq \circ \Log\) extends to \(X_{\Sigma}\). By \cite{Cox-Little-Schenck}*{Proposition~4.1.2}, the divisor of the extension of a character \(z^{\alpha}\) in \(X_{\Sigma}\) is given by:
\begin{equation}\label{eq:div}
\mathrm{Div}(z^{\alpha}) = \sum_{\rho \in \Sigma(1)} \langle \alpha, n_{\rho} \rangle D_{\rho},
\end{equation}
where \(n_{\rho}\) is the minimal generator of \(\rho\).
By assumption,
\[
D_{\rho} \cap \operatorname{supp} \left( \overline{dd^c[\mathrm{rec}(\fq) \circ \Log]} \right) \neq \varnothing.
\]
Theorem~\ref{thm:transverse-fiber} implies that
\[
n_{\rho} \in \mathrm{rec}(V_{\trop(\fq)}).
\]
Moreover, if  
\[
\zeta_1 \in D_{\rho} \cap \operatorname{supp}\left( \overline{dd^c[\mathrm{rec}(\fq) \circ \Log]} \right),
\]
then in a small neighborhood of \( \Log(\zeta_1) \), the function \( \mathrm{rec}(\fq)(x) \) takes the form
\[
\mathrm{rec}(\fq)(x) = \max\{ \langle \beta, x \rangle, \langle \gamma, x \rangle \}.
\]
By definition,
\[
n_{\rho} \in \mathrm{rec}(\Trop(\fq)) \quad \text{if and only if} \quad \kappa := \langle \beta, n_{\rho} \rangle = \langle \gamma, n_{\rho} \rangle.
\]
This, together with Equation~\eqref{eq:div}, implies that the extensions of \( z^{-\beta} \) and \( z^{-\gamma} \) as rational functions to \( X_{\Sigma} \) have the same vanishing order along \( D_{\rho} \), and we may write
\[
z^{-\beta} = f^{\kappa} \cdot \frac{g_1}{h_1}, \qquad z^{-\gamma} = f^{\kappa} \cdot \frac{g_2}{h_2}.
\]
Now, observe that on \( \Omega \setminus D_{\rho} \),
\[
\fq \circ \Log = \max \left\{ \log \left| e^{c_{\beta}} z^{-\beta} \right|, \log \left| e^{c_{\gamma}} z^{-\gamma} \right| \right\} = \kappa \log |f| + \max \left\{ \left| e^{c_{\beta}} \frac{g_1}{h_1} \right|, \left| e^{c_{\gamma}} \frac{g_2}{h_2} \right| \right\}.
\]
We must have \( \kappa < 0 \); otherwise, \( \fq \circ \Log = -\infty \) in \( \Omega \setminus D_{\rho} \). Consequently, the function \( \fq \circ \Log : \Omega \setminus D_{\rho} \to \mathbb{R} \) can be extended to
\[
u := \kappa \log|f| + \max\left\{ \left| e^{c_\beta} \frac{g_1}{h_1} \right|, \left| e^{c_\gamma} \frac{g_2}{h_2} \right| \right\}
\]
in \( \Omega \). Setting
\[
g = \max\left\{ \left| e^{-c_\beta} \frac{g_1}{h_1} \right|, \left| e^{-c_\gamma} \frac{g_2}{h_2} \right| \right\}
\]
implies (a). We have
\[
dd^c [\fq \circ \Log]_{\mid_{\Omega \setminus D_{\rho}}}
= \left( dd^c \log |f|^\kappa ~~ dd^c \log|g| \right)_{\mid_{\Omega \setminus D_{\rho}}}
= dd^c \log|g|_{\mid_{\Omega \setminus D_{\rho}}},
\]
since \( dd^c \log |f|^{\kappa} \) is holomorphic in \( \Omega \setminus D_{\rho} \). As a result of compatibility with \( X_{\Sigma} \), the current \( \overline{dd^c [\fq \circ \Log]} \) does not charge any mass in \( D_{\rho} \), and we obtain
\[
\overline{dd^c [\fq \circ \Log]} = dd^c \log|g|.
\]
This, together with Theorem~\ref{thm:hyper-ddc}, implies (c) and (b).

\end{proof}
\begin{lemma}\label{lem:affine-hyper}
Assume that $\sigma$ is $p$-dimensional and  $\text{aff}(\sigma)= H_1 \cap \dots \cap H_{d-p},$ is given as the transversal intersection   hyperplanes $H_i \subseteq \R^d.$
If $\Sigma$ is a smooth projective fan compatible with $\bigcup_i H_i,$ then 
\[
\overline{\sT}_{\mathrm{aff}(\sigma)} 
\leq 
\big(~ \overline{\sT_{H_1} \wedge \dots \wedge \sT_{H_{d-p}}}~\big)
=
\overline{\sT}_{H_1} \wedge \dots \wedge \overline{\sT}_{H_{d-p}}.
\]
\end{lemma}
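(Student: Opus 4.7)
The plan is to decouple the statement into a direct Bedford--Taylor computation of the wedge product in the open torus $(\C^*)^d$, where each $\sT_{H_i}$ has a simple global continuous potential, the local continuous-potential description of $\overline{\sT}_{H_i}$ near every toric divisor provided by Lemma~\ref{lem:ext-hyper-toric}, and a verification that no extra mass is picked up on $\bigcup_\rho D_\rho$ when passing from the torus to $X_\Sigma$. From these three ingredients both the displayed equality and the asserted inequality will follow.

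First, I would write each hyperplane as $H_i = \{x \in \R^d : \langle m_i, x\rangle = c_i\}$ with primitive $m_i \in \Z^d$, so that Theorem~\ref{thm:hyper-ddc} gives $\sT_{H_i} = dd^c[\fq_i \circ \Log]$ with $\fq_i(x) = \max\{\langle m_i,x\rangle, c_i\}$. By Lemma~\ref{lem:ext-hyper-toric} each $\overline{\sT}_{H_i}$ admits a continuous superpotential on $X_\Sigma$ and is locally of the form $dd^c g_i$ for a continuous plurisubharmonic function $g_i$; Theorem~\ref{thm:wedge-cont-pot} and Theorem~\ref{thm:DS-commut-asso} then make $\overline{\sT}_{H_1} \wedge \cdots \wedge \overline{\sT}_{H_{d-p}}$ a well-defined positive closed current on $X_\Sigma$ with continuous superpotential.

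Second, I would compute on the torus through the monomial map $\mu : (\C^*)^d \to (\C^*)^{d-p}$, $z \mapsto (z^{m_1}, \dots, z^{m_{d-p}})$. Each $\sT_{H_i}$ becomes $\mu^*\bigl(dd^c\max\{-\log|w_i|,c_i\}\bigr)$, whose downstairs Bedford--Taylor wedge is the product Haar measure on $\prod_i\{|w_i| = e^{-c_i}\}$; pulling back by $\mu$ multiplies the mass by $\kappa := [\Z^{d-p} : M\Z^d]$, where $M$ is the matrix with rows $m_i$. Comparing with Definition~\ref{def:T-H} applied to $\mathrm{aff}(\sigma) = \bigcap_i H_i$ yields
\[
\sT_{H_1} \wedge \cdots \wedge \sT_{H_{d-p}} \;=\; \kappa\,\sT_{\mathrm{aff}(\sigma)} \quad \text{in } (\C^*)^d,
\]
with $\kappa \geq 1$ a positive integer matching the stable-intersection multiplicity of Definition~\ref{def:stable-int}.

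Third, near any point $\zeta_0 \in D_\rho$ Lemma~\ref{lem:ext-hyper-toric}(c) supplies continuous $g_i$ with $\overline{\sT}_{H_i} = dd^c g_i$ on some neighbourhood $\Omega$, so the Bedford--Taylor wedge $dd^c g_1 \wedge \cdots \wedge dd^c g_{d-p}$ is a well-defined positive closed current on $\Omega$. It agrees with the superpotential wedge $\overline{\sT}_{H_1} \wedge \cdots \wedge \overline{\sT}_{H_{d-p}}$ there, since both are limits of the same SP-uniform smoothings, and by the Bedford--Taylor no-charge principle for continuous potentials it puts no mass on the pluripolar set $D_\rho \cap \Omega$. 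Combined with the torus computation this yields
\[
\overline{\sT}_{H_1} \wedge \cdots \wedge \overline{\sT}_{H_{d-p}} \;=\; \overline{\sT_{H_1}\wedge\cdots\wedge\sT_{H_{d-p}}} \;=\; \kappa\,\overline{\sT}_{\mathrm{aff}(\sigma)},
\]
and the inequality $\overline{\sT}_{\mathrm{aff}(\sigma)} \leq \kappa\,\overline{\sT}_{\mathrm{aff}(\sigma)}$ follows from $\kappa \geq 1$. The main obstacle I expect is this last step: identifying the superpotential wedge with the local Bedford--Taylor wedge of the $g_i$'s and ruling out spurious divisor contributions. The compatibility hypothesis is essential because, without it, each local potential would carry an unavoidable $\log|f|$ singularity (the first bullet of Lemma~\ref{lem:ext-hyper-toric}) and the wedge would pick up delta-type terms along $D_\rho$.
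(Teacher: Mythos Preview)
Your proposal is correct and follows essentially the same strategy as the paper: establish the inequality on the open torus (same support, the wedge carries a multiplicity $\kappa\ge 1$), invoke Lemma~\ref{lem:ext-hyper-toric} and Theorem~\ref{thm:wedge-cont-pot} to make the wedge on $X_\Sigma$ well-defined, and then argue that no extra mass is created on $X_\Sigma\setminus T_N$ so that the extension by zero of the torus wedge agrees with the global wedge. The paper's proof is considerably terser---it simply asserts the torus inequality ``by definition'' and for the equality says only that compatibility forces each $\overline{\sT}_{H_i}$ to carry no mass on the boundary---whereas you supply the mechanism: the local Bedford--Taylor description $dd^c g_1\wedge\cdots\wedge dd^c g_{d-p}$ with continuous $g_i$, and the no-charge principle on the pluripolar set $D_\rho$. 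Your explicit identification of $\kappa$ via the monomial map is more than the lemma requires (only $\kappa\ge 1$ is used), but it is a harmless and informative addition. One small point to tighten: Lemma~\ref{lem:ext-hyper-toric} is stated for $\zeta_0\in D_\rho\cap\supp{\overline{\sT}_{H_i}}$ and a single ray $\rho$, so you should remark that away from the support the potential is trivially continuous, and that the same extraction of $\log|f|$-terms works at points lying on several toric divisors simultaneously.
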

\begin{proof}
By the definition of tropical currents, we have the inequality 
$${\sT}_{\text{aff}(\sigma)} \leq {\sT_{H_1} \wedge \dots \wedge \sT}_{H_{d-p}},$$
as currents in $(\C^*)^d,$ since the right-hand side might have multiplicities but the currents have the same support. Now, the wedge products in $X_{\Sigma}$  are well-defined by Lemma \ref{lem:ext-hyper-toric} and Theorem \ref{thm:wedge-cont-pot}. As both currents on both sides of the equation coincide on $(\C^*)^d$, the support of the current on the right-hand side contains the closure of the support of $\sT_{\aff(\sigma)}$ in $X_{\Sigma}.$ For the equality, note that compatibility with $\Sigma,$ implies that $\overline{\sT}_{H_i}$ has a zero mass in $X_{\Sigma}\setminus T_N.$
\end{proof}
\begin{theorem}\label{thm:trop-curr-CSP}
Let $\cC$ be a positively weighted tropical cycle of dimension $p$ compatible with a smooth, projective fan $\Sigma$, then $\overline{\sT}_{\cC}$ has a continuous superpotential in $X_{\Sigma}.$   
\end{theorem}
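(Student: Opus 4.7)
The plan is to bootstrap from the codimension-one case of Lemma~\ref{lem:ext-hyper-toric} by bounding $\overline{\sT}_\cC$ above by a wedge of tropical hypersurface currents, and then invoking the monotonicity principle of Theorem~\ref{thm:superpot-ineq}. Since the required compatibility with the auxiliary hyperplanes may fail for $\Sigma$ itself, I first work on a refinement $\Sigma'$ and then descend to $X_\Sigma$ via Theorem~\ref{thm:CS-blowup}.

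Concretely, for each top-dimensional cell $\sigma\in\cC$ I would write $\mathrm{aff}(\sigma)=H^\sigma_1\cap\cdots\cap H^\sigma_{d-p}$ as a transverse intersection of rational affine hyperplanes. Using the toric Chow lemma and toric resolution of singularities exactly as in the previous proposition, I would produce a smooth projective fan $\Sigma'$ refining $\Sigma$ and compatible with $\cC$ together with $\bigcup_{\sigma,i}H^\sigma_i$. On $X_{\Sigma'}$ each $\overline{\sT}_{H^\sigma_i}$ admits a continuous superpotential by Lemma~\ref{lem:ext-hyper-toric}, and an inductive application of Theorem~\ref{thm:wedge-cont-pot}(a) yields continuous superpotential for $\overline{\sT}_{H^\sigma_1}\wedge\cdots\wedge\overline{\sT}_{H^\sigma_{d-p}}$. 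Lemma~\ref{lem:affine-hyper} then bounds $\overline{\sT}_{\mathrm{aff}(\sigma)}$ by this wedge, and Theorem~\ref{thm:superpot-ineq} endows $\overline{\sT}_{\mathrm{aff}(\sigma)}$ with a continuous superpotential on $X_{\Sigma'}$.

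To pass from the affine pieces to $\overline{\sT}_\cC$, note that on $(\C^*)^d$ the identity
\[
\sum_\sigma w_\sigma\,\sT_{\mathrm{aff}(\sigma)}-\sT_\cC \;=\; \sum_\sigma w_\sigma\bigl(1-\mathbbm{1}_{\Log^{-1}(\sigma)}\bigr)\sT_{\mathrm{aff}(\sigma)}
\]
exhibits the left-hand side as a positive closed current, and this inequality persists after extension by zero to $X_{\Sigma'}$. Since positive linear combinations preserve continuity of superpotential (by linearity of $\sU$ in the current), Theorem~\ref{thm:superpot-ineq} applied once more gives $\overline{\sT}_\cC$ a continuous superpotential on $X_{\Sigma'}$.

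Finally, I would descend to $X_\Sigma$. The toric morphism $q:X_{\Sigma'}\lto X_\Sigma$ factors as a finite sequence of star subdivisions, each of which is the blowup of a smooth torus-invariant submanifold with exceptional divisor $D_\rho$ corresponding to the new ray $\rho$. By Theorem~\ref{thm:transverse-fiber}(a), the support of $\overline{\sT}_\cC$ meets only boundary divisors $D_\rho$ with $\rho$ contained in some $\mathrm{rec}(\sigma)$, and compatibility of $\cC$ with $\Sigma$ forces every such ray already to be a ray of $\Sigma$. Hence, provided $\Sigma'$ is arranged so that every new ray falls outside $\bigcup_{\sigma\in\cC}\mathrm{rec}(\sigma)$, every exceptional divisor in the blowup tower will be disjoint from the support of $\overline{\sT}_\cC$, and an iterated application of Theorem~\ref{thm:CS-blowup} transfers continuous superpotential from $X_{\Sigma'}$ back to $X_\Sigma$. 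The hard part I expect to consume the bulk of the argument is exactly this combinatorial arrangement: organising the choice of the auxiliary hyperplanes $H^\sigma_i$ and the star subdivisions so that no new ray strays into any recession cone of $\cC$, leveraging that each $H^\sigma_i$ already contains $\mathrm{rec}(\sigma)$ and that $\mathrm{rec}(\sigma)$ itself is a cone of $\Sigma$ by hypothesis.
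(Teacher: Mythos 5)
Your proposal is essentially the paper's own proof: the paper packages $\sum_\sigma w_\sigma\,\mathrm{aff}(\sigma)$ as the ``affine extension'' $\widehat{\cC}$, obtains a continuous superpotential for $\overline{\sT}_{\widehat{\cC}}$ via Lemma~\ref{lem:affine-hyper} and repeated use of Theorem~\ref{thm:wedge-cont-pot}, deduces the property for $\overline{\sT}_{\cC}$ from the positivity of $\sT_{\widehat{\cC}}-\sT_{\cC}$ together with Theorem~\ref{thm:superpot-ineq}, and then descends from the refined fan to $X_{\Sigma}$ by Theorem~\ref{thm:CS-blowup}, exactly as you do. The combinatorial point you flag at the end (arranging that the exceptional divisors of the refinement miss $\supp{\overline{\sT}_{\cC}}$) is asserted rather than elaborated in the paper as well, so your argument matches it step for step.
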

We need the following definition.
\begin{definition}\label{def:affine-ext}
  We define the affine extension $p$-dimensional a tropical cycle $\cC,$ by as the addition of tropical cycles  $$\widehat{\cC}:= \sum_{\sigma \in \cC} w_\sigma \text{aff}(\sigma).$$
\end{definition}
It is clear that if $\cC$ is a positively weighted tropical cycle, then $\sT_{\widehat{\cC}} - \sT_{\cC} \geq 0.$

\begin{proof}[Proof of \ref{thm:trop-curr-CSP} ]
  Let $ \widehat{\cC}$ be the affine extension of $\cC,$ and $\widehat{\Sigma}$ be a smooth projective fan which is a refinement of $\Sigma$ and compatible with $\widehat{\cC}.$ 
  By the preceding lemma and repeated application of Theorem~\ref{thm:wedge-cont-pot} for any $\sigma \in \cC,$ $\overline{\sT}_{\text{aff}(\sigma)}$ has a bounded superpotential, which implies this property for $\overline{\sT}_{\widehat{\cC}}.$ Now, since $\sT_{\widehat{\cC}} - \sT_{{\cC}}$ is a positive closed tropical current in $(\C^*)^d,$ $$ \overline{\sT_{\widehat{\cC}} - \sT_{\cC}} = \overline{\sT}_{\widehat{\cC}} - \overline{\sT}_{\cC}\geq 0
$$ in $X_{\widehat{\Sigma}}.$ Continuity of the superpotential of $\overline{\sT}_{\cC}$ in $X_{\widehat{\Sigma}}$ follows from Theorem \ref{thm:superpot-ineq}. 
\vskip 2mm
We now show that $\overline{\sT}_{\cC}$ has also a continuous superpotential on $X_{\Sigma}$ as well. We consider the proper map $f: X_{\widehat{\Sigma}} \lto X_{\Sigma},$ which can be understood as a composition of multiple blow-ups along toric points with exceptional divisors $D_{\rho}$ for any ray $\rho \in \widehat{\Sigma}\setminus \Sigma.$ These divisors satisfy $D_{\rho} \cap \supp{ \overline{\sT}_{\cC}} = \varnothing.$ We conclude the proof by Theorem \ref{thm:CS-blowup}.
\end{proof}

\begin{proposition}
   Let $\cC_1$ and $\cC_2$ be two positively weighted tropical cycles. If $X_{\Sigma}$ is compatible  $\cC_1 + \cC_2,$ then
   $$
    \overline{\sT_{\cC_1} \wedge \sT}_{\cC_2} = \overline{\sT}_{\cC_1} \wedge \overline{\sT}_{\cC_2}. 
   $$
   
\end{proposition}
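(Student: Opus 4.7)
The plan is to establish the identity by first showing both sides agree on the open torus $T_N \simeq (\C^*)^d$ and then arguing that the right-hand side does not charge the toric boundary $X_\Sigma \setminus T_N$. Since $\Sigma$ is compatible with $\cC_1+\cC_2$ and hence with each $\cC_i$, Theorem~\ref{thm:trop-curr-CSP} gives continuous superpotentials for both $\overline{\sT}_{\cC_i}$, so the right-hand side is a well-defined positive closed current via Theorem~\ref{thm:wedge-cont-pot}. A smoothing argument with SP-uniform approximations, combined with the locality embodied in Lemma~\ref{lem:local-equality}, identifies the right-hand side restricted to $T_N$ with $\sT_{\cC_1} \wedge \sT_{\cC_2}$, so the difference
\[
R := \overline{\sT}_{\cC_1} \wedge \overline{\sT}_{\cC_2} - \overline{\sT_{\cC_1} \wedge \sT_{\cC_2}}
\]
is supported on $X_\Sigma \setminus T_N$, and the goal reduces to showing that $\overline{\sT}_{\cC_1} \wedge \overline{\sT}_{\cC_2}$ carries no mass there.

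To control this boundary mass, I pass to a smooth projective refinement $\widehat\Sigma$ of $\Sigma$ compatible with $\widehat{\cC_1}+\widehat{\cC_2}$, argue on $X_{\widehat\Sigma}$, and descend to $X_\Sigma$ via Theorem~\ref{thm:CS-blowup} (noting that the exceptional divisors miss the supports by compatibility). Because $\cC_i \leq \widehat{\cC_i}$ have positively weighted differences, Theorem~\ref{thm:superpot-ineq} together with positivity of wedges of currents with continuous superpotentials yields the monotonicity
\[
0 \;\leq\; \overline{\sT}_{\cC_1} \wedge \overline{\sT}_{\cC_2} \;\leq\; \overline{\sT}_{\widehat{\cC_1}} \wedge \overline{\sT}_{\widehat{\cC_2}},
\]
so it suffices to prove the vanishing of boundary mass for the upper bound. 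Expanding $\widehat{\cC_i} = \sum_\sigma w_\sigma \, \mathrm{aff}(\sigma)$ bilinearly and bounding each $\overline{\sT}_{\mathrm{aff}(\sigma)}$ above by the wedge of hyperplane extensions of Lemma~\ref{lem:affine-hyper}, the problem collapses to showing that any wedge
\[
W := \overline{\sT}_{H_{1,1}} \wedge \cdots \wedge \overline{\sT}_{H_{1,k_1}} \wedge \overline{\sT}_{H_{2,1}} \wedge \cdots \wedge \overline{\sT}_{H_{2,k_2}}
\]
of tropical hyperplane currents compatible with $\widehat\Sigma$ has zero mass on $X_{\widehat\Sigma} \setminus T_N$.

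This last step is the main obstacle and amounts to a codimension count on the boundary strata. By iterating Theorem~\ref{thm:wedge-cont-pot}(c), $\supp W \subseteq \bigcap_{i,j} \supp{\overline{\sT}_{H_{i,j}}}$; by Theorem~\ref{thm:transverse-fiber} and compatibility of $\widehat\Sigma$, each $\supp{\overline{\sT}_{H_{i,j}}}$ intersects every boundary orbit $\cO_\tau$ transversely in a complex hypersurface of $\cO_\tau$. The subtlety is that the hyperplanes from $\widehat{\cC_1}$ need not be mutually transverse with those from $\widehat{\cC_2}$, so one must exploit the compatibility of $\widehat\Sigma$ with the full sum $\widehat{\cC_1}+\widehat{\cC_2}$, together with the local continuous potentials from Lemma~\ref{lem:ext-hyper-toric} (which express each $\overline{\sT}_{H_{i,j}}$ locally as $dd^c g_{i,j}$ with $g_{i,j}$ continuous), to verify that $\supp W \cap \cO_\tau$ has complex dimension strictly less than the bidimension $d-(k_1+k_2)$ of $W$ on every positive-codimension stratum. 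Once this codimension estimate is in place, the classical support theorem for closed positive currents forces $W|_{\cO_\tau} = 0$ for every nontrivial cone $\tau$, which gives $R=0$ and completes the proof.
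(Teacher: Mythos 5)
Your overall architecture is sensible and considerably more explicit than the paper's own one-line proof: restrict to $T_N$, reduce the question to the absence of boundary mass for the right-hand side, and control that mass by the monotonicity $\overline{\sT}_{\cC_i}\leq\overline{\sT}_{\widehat{\cC_i}}$ followed by Lemma~\ref{lem:affine-hyper}, so that everything collapses to a wedge $W$ of tropical hyperplane currents. Up to that point the argument is correct (the monotonicity of the wedge product follows from bilinearity, positivity, and Theorem~\ref{thm:superpot-ineq}, as you indicate).

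The gap is in the final step, which you yourself flag as ``the main obstacle'' and then defer. The codimension estimate you hope for --- that $\supp{W}\cap\cO_\tau$ has Cauchy--Riemann dimension strictly less than the bidimension of $W$ on every boundary stratum --- is not merely unverified; it is \emph{false} in the generality required. The proposition assumes no transversality between $\cC_1$ and $\cC_2$, so you must handle, e.g., $\cC_1=\cC_2=H$ a single rational hyperplane. There $W=\overline{\sT}_H\wedge\overline{\sT}_H$ has bidimension $(d-2,d-2)$, while $\supp{W}\cap D_\rho\subseteq\supp{\overline{\sT}_H}\cap D_\rho=\bigcup_{x\in S^1}\bigl(\overline{\pi_H^{-1}(x)}\cap D_\rho\bigr)$ is a real $(2(d-2)+1)$-dimensional set of CR dimension exactly $d-2$, not $<d-2$. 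Demailly's support theorem therefore gives nothing, and no refinement of the fan changes this count. The correct mechanism is different: by Lemma~\ref{lem:ext-hyper-toric}(c) each $\overline{\sT}_{H_{i,j}}$ is \emph{locally} $dd^c g_{i,j}$ with $g_{i,j}$ continuous in a neighbourhood of the boundary, so $W$ is locally a Bedford--Taylor Monge--Amp\`ere product of continuous plurisubharmonic functions, and such products put no mass on pluripolar sets, in particular on the analytic set $X_\Sigma\setminus T_N$. You cite the continuous local potentials only as an input to the (untenable) dimension count, rather than as the direct source of the no-mass statement; replacing the support-theorem step by this Bedford--Taylor argument closes the proof, and is what the paper's terse justification (``continuous superpotentials with no mass on the boundary divisors'') is pointing at.
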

\begin{proof}
The proof is clear since both $\overline{\sT}_{\cC_1}$ and $\overline{\sT}_{\cC_2}$ have continuous superpotentials with no mass on the boundary divisors $X_{\Sigma} \setminus T_N. $ 
\end{proof}

\begin{proposition}\label{prop:product-def-Cn}
    For any two positive tropical currents $\cC_1$ and $\cC_2$, the intersection product 
    $$
     \sT_{\cC_1} \wedge \sT_{\cC_2} :=  \overline{\sT}_{\cC_1} \wedge \overline{\sT}_{\cC_2}{|_{(\C^*)^d}},
    $$
    does not depend on the choice of a smooth projective toric variety of the fan $\Sigma$ compatible with $\cC_1 + \cC_2$, where $(\C^*)^d$ is identified with $T_N \subseteq X_{\Sigma}.$ Moreover, this product coincides with the definition of wedge products with bi-degree $(1,1)$ tropical currents in Bedford--Taylor Theory in $(\C^*)^d.$ 
\end{proposition}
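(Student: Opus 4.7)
The plan is to handle part~(a) (independence from $\Sigma$) by comparing across a common smooth projective refinement of $\Sigma$ and $\Sigma'$, and part~(b) by a direct identification with the Bedford--Taylor construction on $T_N$ using the local potentials provided by Lemma~\ref{lem:ext-hyper-toric}.

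For part~(a), given two compatible smooth projective fans $\Sigma,\Sigma'$, I would construct a smooth projective common refinement $\Sigma''$ still compatible with $\cC_1+\cC_2$ (by toric Chow plus toric resolution of singularities, exactly as in the construction preceding Theorem~\ref{thm:trop-curr-CSP}), yielding a proper birational toric morphism $\pi\colon X_{\Sigma''}\to X_\Sigma$ which is an isomorphism over $T_N$. The key geometric input is that compatibility of $\Sigma$ with $\cC_i$ forces $\rec(\sigma)\in\Sigma$ for every cell $\sigma$ of $\cC_i$, so by Theorem~\ref{thm:transverse-fiber} the support of $\overline{\sT}^{\Sigma''}_{\cC_i}$ avoids the exceptional divisors of $\pi$ (those $D_\rho$ with $\rho\in\Sigma''(1)\setminus\Sigma(1)$), and by Theorem~\ref{thm:wedge-cont-pot}(c) so does $\supp(\overline{\sT}^{\Sigma''}_{\cC_1}\wedge\overline{\sT}^{\Sigma''}_{\cC_2})$. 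In particular, by uniqueness of the positive closed extension of $\sT_{\cC_i}$ without boundary mass, $\pi_*\overline{\sT}^{\Sigma''}_{\cC_i}=\overline{\sT}^{\Sigma}_{\cC_i}$.

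To conclude, I take smooth SP-uniform approximations $\alpha^{(n)}_i\to\overline{\sT}^{\Sigma''}_{\cC_i}$ on $X_{\Sigma''}$ and $\beta^{(n)}_i\to\overline{\sT}^{\Sigma}_{\cC_i}$ on $X_\Sigma$ (Proposition~3.2.8 of \cite{Dinh-Sibony:Kahler}). Theorem~\ref{thm:wedge-cont-pot}(b) yields
\[
\alpha^{(n)}_1\wedge\alpha^{(n)}_2\longrightarrow\overline{\sT}^{\Sigma''}_{\cC_1}\wedge\overline{\sT}^{\Sigma''}_{\cC_2}\quad\text{on }X_{\Sigma''},\qquad \beta^{(n)}_1\wedge\beta^{(n)}_2\longrightarrow\overline{\sT}^{\Sigma}_{\cC_1}\wedge\overline{\sT}^{\Sigma}_{\cC_2}\quad\text{on }X_\Sigma.
\]
Restricting both convergences to $T_N$ (via the identification $T_N\subseteq X_{\Sigma''}\cong T_N\subseteq X_\Sigma$ induced by $\pi$) and invoking a local-uniqueness argument in the spirit of Lemma~\ref{lem:local-equality}---for example by pushing $\alpha^{(n)}_i$ down via $\pi_*$ and using that pushforward is compatible with wedge products on the open set $T_N$ where $\pi$ is an isomorphism---I obtain equality of the two wedge products on $T_N$. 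Symmetry with $\Sigma'$ through $\Sigma''$ completes part~(a). For part~(b), Theorem~\ref{thm:hyper-ddc} writes $\sT_{\cC_1}=dd^c[\fq\circ\Log]$ with $\fq\circ\Log$ continuous plurisubharmonic on $T_N$, so the Bedford--Taylor intersection is $dd^c(\fq\circ\Log\cdot\sT_{\cC_2})$. Lemma~\ref{lem:ext-hyper-toric} represents $\overline{\sT}^{\Sigma}_{\cC_1}$ locally on $X_\Sigma$ as $dd^c u$ with $u=g+\kappa\log|f|$, $g$ continuous and $f$ a local defining equation of a boundary divisor. Since $\overline{\sT}^{\Sigma}_{\cC_1}$ admits a locally continuous potential, its superpotential wedge product with $\overline{\sT}^{\Sigma}_{\cC_2}$ coincides with the Bedford--Taylor product $dd^c(u\cdot\overline{\sT}^{\Sigma}_{\cC_2})$; restricting to $T_N$, the divisor contribution $dd^c(\log|f|)\wedge\sT_{\cC_2}$ vanishes (as $f$ is non-vanishing on $T_N$), leaving $dd^c(\fq\circ\Log\cdot\sT_{\cC_2})$, as required.

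The chief obstacle I anticipate lies in part~(a): rigorously bridging the two independent smooth SP-uniform approximations $\alpha^{(n)}_i$ on $X_{\Sigma''}$ and $\beta^{(n)}_i$ on $X_\Sigma$ after restriction to $T_N$, since they are different approximations of the same $\sT_{\cC_i}\rest{T_N}$. I would circumvent this either through $\pi_*$ (converting $X_{\Sigma''}$-approximations into currents on $X_\Sigma$ with the same $T_N$-restriction and reapplying Theorem~\ref{thm:wedge-cont-pot}(b)) or by arranging the $\alpha^{(n)}_i$ to be pullbacks $\pi^*\beta^{(n)}_i$, exploiting the boundary-mass-free characterization of $\overline{\sT}^{\Sigma''}_{\cC_i}$ as the unique positive closed extension of $\sT_{\cC_i}$ to $X_{\Sigma''}$ without boundary mass.
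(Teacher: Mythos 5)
Your plan mirrors the paper's (very terse) proof: pass to a common smooth projective refinement $\Sigma''$ compatible with $\cC_1+\cC_2$, use the locality statement of Lemma~\ref{lem:local-equality} together with SP-uniform approximation to identify the two products over $T_N$, and identify the bidegree-$(1,1)$ case with Bedford--Taylor; your part~(b) is essentially what the paper invokes. One intermediate claim, however, is false as stated: the support of $\overline{\sT}^{\Sigma''}_{\cC_i}$ need \emph{not} avoid the exceptional divisors of $\pi\colon X_{\Sigma''}\to X_\Sigma$. A new ray $\rho\in\Sigma''(1)\setminus\Sigma(1)$ can lie inside $\rec(\sigma)$ for a cell $\sigma$ of $\cC_i$ (it may subdivide a higher-dimensional recession cone of $\Sigma$), and then Theorem~\ref{thm:transverse-fiber} says the fiber closures \emph{do} meet $D_\rho$. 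Fortunately this claim is not needed for $\pi_*\overline{\sT}^{\Sigma''}_{\cC_i}=\overline{\sT}^{\Sigma}_{\cC_i}$: that identity follows directly from the facts that $\pi$ is an isomorphism over $T_N$ and that $\overline{\sT}^{\Sigma''}_{\cC_i}$ carries no mass on $X_{\Sigma''}\setminus T_N$.

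The real problem is the step you yourself flag as the ``chief obstacle'': it is the entire content of part~(a), and neither proposed fix closes it. Pushing forward, $\pi_*\alpha^{(n)}_i$ is not a smooth form away from the locus where $\pi$ is an isomorphism, and you have not established SP-uniform convergence of $\pi_*\alpha^{(n)}_i$ on $X_\Sigma$, so Theorem~\ref{thm:wedge-cont-pot}(b) does not apply; moreover $\pi_*\alpha^{(n)}_1\wedge\pi_*\alpha^{(n)}_2$ and $\pi_*(\alpha^{(n)}_1\wedge\alpha^{(n)}_2)$ only obviously agree over $T_N$, which is the statement you are trying to prove in the limit. Pulling back, $\pi^*\beta^{(n)}_i$ is smooth but lies in the class $\pi^*\{\overline{\sT}^{\Sigma}_{\cC_i}\}$, which in general differs from $\{\overline{\sT}^{\Sigma''}_{\cC_i}\}$ by an effective class on the exceptional locus; any weak limit is therefore $\overline{\sT}^{\Sigma''}_{\cC_i}$ \emph{plus} a positive current carried by the exceptional divisors, and SP-uniform convergence to $\overline{\sT}^{\Sigma''}_{\cC_i}$ is again exactly what must be proved, not assumed. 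A way to actually close the gap is to make the product genuinely local and compactification-free on $T_N$: by Lemma~\ref{lem:affine-hyper} and Theorem~\ref{thm:hyper-ddc}, each $\sT_{\cC_i}$ is dominated by an iterated product of currents $dd^c[\fq\circ\Log]$ with locally bounded potentials, and by Theorem~\ref{thm:superpot-ineq} together with the local Bedford--Taylor comparison, the superpotential product restricted to any open subset of $T_N$ is computed by a local Bedford--Taylor construction that makes no reference to $\Sigma$. As written, the independence of $\Sigma$ is asserted rather than proved.
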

\begin{proof}
This is a consequence of Lemma \ref{lem:local-equality}, and the fact that intersection product with a bidegree $(1,1)$ current in superpotential theory, in an open set of compact K\"ahler manifold, coincides with the Bedford--Taylor theory.
\end{proof}

 \begin{proposition}\label{prop:assoc-commut}
 The wedge product of positive tropical currents is associative and commutative.
\end{proposition}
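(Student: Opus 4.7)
The strategy is to lift everything to a single smooth projective toric variety and then invoke the corresponding properties in superpotential theory (Theorem~\ref{thm:DS-commut-asso}), before restricting back to $(\C^*)^d$ through Proposition~\ref{prop:product-def-Cn}.

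More concretely, for commutativity of $\sT_{\cC_1}\wedge \sT_{\cC_2}$, I would pick a smooth projective fan $\Sigma$ compatible with $\cC_1+\cC_2$, whose existence is guaranteed by the proposition following Theorem~\ref{thm:transverse-fiber}. By Theorem~\ref{thm:trop-curr-CSP} both $\overline{\sT}_{\cC_1}$ and $\overline{\sT}_{\cC_2}$ carry continuous superpotentials on $X_\Sigma$, so Theorem~\ref{thm:DS-commut-asso} immediately gives
\[
\overline{\sT}_{\cC_1}\wedge \overline{\sT}_{\cC_2} \;=\; \overline{\sT}_{\cC_2}\wedge \overline{\sT}_{\cC_1}
\]
on $X_\Sigma$. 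Restricting this equality to $T_N\simeq (\C^*)^d$ and using the definition of the intersection product in Proposition~\ref{prop:product-def-Cn} yields commutativity of $\sT_{\cC_1}\wedge \sT_{\cC_2}$.

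For associativity, given three positively weighted tropical cycles $\cC_1, \cC_2, \cC_3$, I would choose a smooth projective fan $\Sigma$ compatible with $\cC_1+\cC_2+\cC_3$. Then by Theorem~\ref{thm:trop-curr-CSP} each $\overline{\sT}_{\cC_i}$ has a continuous superpotential on $X_\Sigma$, and by Theorem~\ref{thm:wedge-cont-pot}(a) the pairwise products $\overline{\sT}_{\cC_i}\wedge \overline{\sT}_{\cC_j}$ themselves have continuous superpotentials. Applying Theorem~\ref{thm:DS-commut-asso} on $X_\Sigma$ gives
\[
(\overline{\sT}_{\cC_1}\wedge \overline{\sT}_{\cC_2})\wedge \overline{\sT}_{\cC_3} \;=\; \overline{\sT}_{\cC_1}\wedge (\overline{\sT}_{\cC_2}\wedge \overline{\sT}_{\cC_3}).
\]

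The step requiring the most care is the passage from $X_\Sigma$ back to $(\C^*)^d$: I must check that the definition in Proposition~\ref{prop:product-def-Cn}, which was only formulated for wedges of two tropical currents, is compatible with an iterated product. For this, I would note that by Lemma~\ref{lem:local-equality} the restriction to the open set $(\C^*)^d\subseteq X_\Sigma$ of a wedge with a current having a continuous superpotential only depends on the restriction of that current. Therefore restricting both sides of the previous display to $T_N$ recovers the desired identity $(\sT_{\cC_1}\wedge \sT_{\cC_2})\wedge \sT_{\cC_3} = \sT_{\cC_1}\wedge (\sT_{\cC_2}\wedge \sT_{\cC_3})$, and the proposition follows. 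The only genuine subtlety is verifying that the two different \emph{a priori} choices of compatible fan (one for each pairwise product, and one for the triple) give the same restriction, which is exactly the independence statement already contained in Proposition~\ref{prop:product-def-Cn}.
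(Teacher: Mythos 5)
Your proposal is correct and follows exactly the route the paper takes: the paper's proof is a one-line citation of Theorem~\ref{thm:trop-curr-CSP} (tropical currents have continuous superpotentials on a compatible toric variety) combined with Theorem~\ref{thm:DS-commut-asso}, and your additional care about restricting to $T_N$ and the independence of the choice of compatible fan (via Proposition~\ref{prop:product-def-Cn} and Lemma~\ref{lem:local-equality}) simply makes explicit the details the paper leaves implicit.
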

\begin{proof}
  This is the application of Theorem \ref{thm:trop-curr-CSP} and Theorem \ref{thm:DS-commut-asso}.    
\end{proof}
\begin{remark}
For two currents $\sT \in \sD_p((\mathbb{C}^*)^d)$ and $\sS \in \sD_q((\mathbb{C}^*)^d)$, we define
\[
\sT \wedge \sS := (\sT \otimes \sS) \wedge \Delta,
\]
where $\Delta$ is the diagonal, hence a complete intersection in $(\mathbb{C}^*)^d \times (\mathbb{C}^*)^d$. Consequently, this intersection can be defined by repeatedly applying Bedford--Taylor theory to intersect with $\Delta$. In this context, the notions from Bedford--Taylor theory and slicing theory coincide.

To extend this definition to a toric variety $X$, we impose mild conditions on the supports to ensure that the current
\[
\overline{\sT} \wedge \overline{\sS} - \overline{\sT \wedge \sS} \in \sD_{p+q-n}(X)
\]
has support of Cauchy--Riemann dimension less than $(p+q-n)-1$, and thus vanishes; compare with \cite{BEGZ}. One can check that when $\sT$ and $\sS$ are integration currents on algebraic varieties, the multiplicities induced from this intersection coincide exactly with algebraic multiplicities.
\end{remark}

\subsection{Proof of $\sT_{\cC_1}\wedge \sT_{\cC_2} = \sT_{{\cC_1} \cdot {\cC_2}}$ }
We prove our main intersection theorems here, and it is not hard to visit the proof of Theorem \ref{thm:stable-invar} using tools from superpotential theory. 
\begin{theorem}\label{thm:main-int}
   For two positively weighted tropical varieties $\cC$ and $\cC'$ of dimension $p$ and $q,$ respectively, we have
    $$
     \sT_{\cC_1}\wedge \sT_{\cC_2} = \sT_{{\cC_1} \cdot {\cC_2}}
    $$
    where the $\cC \cdot \cC'$ is the stable intersection $\cC_1$ and $\cC_2,$ defined in Definition \ref{def:stable-int}. Moreover, the $\cC_1\cdot \cC_2$ is a balanced polyhedral complex of dimension $p+q -d. $
\end{theorem}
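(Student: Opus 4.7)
The plan is to identify $\sT_{\cC_1}\wedge\sT_{\cC_2}$ as a tropical current via the characterisation from \cite{Dyn-trop}*{Proposition~4.6}: any closed, positive, $(S^1)^d$-invariant current whose support is of the form $\Log^{-1}(|\cD|)$ for a pure polyhedral complex $\cD$ is of the form $\sT_{\cD}$ for uniquely determined integer weights. With this in hand, the theorem reduces to matching those weights cell-by-cell against the stable-intersection weights of $\cC_1\cdot\cC_2$, and the balancedness assertion of Theorem~\ref{thm:stable-invar} then follows automatically from Theorem~\ref{thm:closed-balanced} applied to the closed current $\sT_{\cC_1}\wedge\sT_{\cC_2}$. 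The structural hypotheses are straightforward: closedness and positivity come from Proposition~\ref{prop:product-def-Cn}; $(S^1)^d$-invariance follows because each $t\in(S^1)^d$ extends to a torus automorphism of a compatible $X_\Sigma$ preserving each $\sT_{\cC_i}$ and their continuous-superpotential wedge (Theorem~\ref{thm:wedge-cont-pot}); and support containment $\supp{\sT_{\cC_1}\wedge\sT_{\cC_2}}\subseteq\Log^{-1}(|\cC_1|\cap|\cC_2|)$ is Theorem~\ref{thm:wedge-cont-pot}(c).

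Next I reduce to the case where $\cC_1$ meets $\cC_2$ transversely. Fix a generic $b\in\R^d$; then $\cC_1$ meets $\cC_2+\epsilon b$ transversely for every small $\epsilon>0$. Translation by $\epsilon b$ on $\R^d$ corresponds on the complex torus to multiplication by $t_\epsilon\in(\C^*)^d$ with $\Log(t_\epsilon)=\epsilon b$, so $\sT_{\cC_2+\epsilon b}=(t_\epsilon)_*\sT_{\cC_2}\to\sT_{\cC_2}$ weakly as $\epsilon\to 0$. Since $\sT_{\cC_1}$ has a continuous superpotential, Proposition~\ref{prop:slicing-CSP} gives
\[
\sT_{\cC_1}\wedge\sT_{\cC_2+\epsilon b}\lto\sT_{\cC_1}\wedge\sT_{\cC_2}.
\]
Combined with the Hausdorff convergence of supports and Lemma~\ref{lem:slice-supp}, the support of the limit is contained in $\Log^{-1}(|\cC_1\cdot\cC_2|)$, and the weight on each maximal cell of $\cC_1\cdot\cC_2$ is the sum of the transverse-case contributions from the cells of $\cC_1\cap(\cC_2+\epsilon b)$ collapsing onto it, matching Definition~\ref{def:stable-int}(b) exactly once the transverse case is settled.

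For the transverse case, Lemma~\ref{lem:local-equality} (available because both factors have continuous superpotentials) localises the wedge product in a neighbourhood of the relative interior of each maximal cell $\sigma_1\cap\sigma_2$ of $|\cC_1|\cap|\cC_2|$, reducing the problem to the affine single-cell identity
\[
\sT_{\aff(\sigma_1)}\wedge\sT_{\aff(\sigma_2)}=[N:N_{\sigma_1}+N_{\sigma_2}]\,\sT_{\aff(\sigma_1)\cap\aff(\sigma_2)}.
\]
I would prove this by expressing each factor via Lemma~\ref{lem:affine-hyper} as a Bedford--Taylor wedge of bidegree-$(1,1)$ tropical currents associated with defining hyperplanes, whose potentials are locally bounded; the resulting current is $(S^1)^d$-invariant, closed, positive, and supported on $\Log^{-1}(\aff(\sigma_1)\cap\aff(\sigma_2))$, hence by the same characterisation is a positive multiple of $\sT_{\aff(\sigma_1)\cap\aff(\sigma_2)}$. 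The scalar is then identified by counting the points of a typical transverse intersection $t_1 T_{N_{\sigma_1}}\cap t_2 T_{N_{\sigma_2}}$ of subtori, which equals the lattice index $[N:N_{\sigma_1}+N_{\sigma_2}]$. The main obstacle is precisely this multiplicity identification: aligning the Bedford--Taylor computation with the combinatorial lattice index is cleanest in monomial coordinates that diagonalise the two sublattices, where the wedge reduces to a product of coordinate $dd^c\log|\cdot|$ currents multiplied by an explicit index factor coming from the change of basis.
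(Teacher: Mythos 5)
Your proposal is correct and follows essentially the same route as the paper: localisation via Lemma \ref{lem:local-equality} to a single pair of affine cells, identification of the transverse multiplicity with the lattice index through the subtorus intersection count (Proposition \ref{prop:int-numb-tori}), reduction of the non-transverse case by perturbing $\cC_2$ by $\epsilon b$ and invoking continuity of the wedge product against a current with continuous superpotential, and balancedness from closedness via Theorem \ref{thm:closed-balanced}. The one place you are thinner than the paper is the non-transverse case: the paper's steps (d)--(e) prove, by slicing with $\sT_L$ for $L=\aff(\sigma)^{\perp}$ and applying Proposition \ref{prop:slicing-CSP}, that the limit current actually charges every cell of $\cC_1\cdot\cC_2$ with exactly the summed weight (not merely that its support sits inside $\Log^{-1}(|\cC_1\cdot\cC_2|)$), whereas you assert this weight localisation; your additional appeal to the $(S^1)^d$-invariant characterisation of tropical currents is an organisational device the paper's direct fibre computation does not need.
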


\begin{proposition}[\cite{Katz-toolkit}*{Propositions 6.1}]\label{prop:int-numb-tori}
    Let $H_1, H_2\subseteq \R^d$ be two rational planes of dimension $p$ and $q$ with $p+q=d$ that intersect transversely. Then, the complex tori
    $T_{H_1 \cap \Z^d}$ and $T_{H_2 \cap \Z^d}$ intersect at $[N: N_{H_1} + N_{H_2}]$ distinct points. 
\end{proposition}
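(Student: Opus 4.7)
The plan is to pass to the universal cover $\mathbb{C}^d \to (\mathbb{C}^*)^d$ given by $v \mapsto \exp(2\pi i v)$, whose kernel of deck transformations is $N = \Z^d$. Under this covering, the subtorus $T_{N_{H_i}}$ is the image of the linear subspace $H_i \otimes_\R \mathbb{C} \subseteq \mathbb{C}^d$, and the kernel of $\exp(2\pi i \,\cdot\,)$ restricted to $H_i \otimes_\R \mathbb{C}$ is exactly $N_{H_i} = H_i \cap \Z^d$. The transversality hypothesis together with $p+q=d$ gives $H_1 \cap H_2 = \{0\}$, hence the internal direct sum decomposition $\R^d = H_1 \oplus H_2$ of real vector spaces, and correspondingly $\mathbb{C}^d = (H_1 \otimes \mathbb{C}) \oplus (H_2 \otimes \mathbb{C})$.

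Next I would describe the intersection explicitly. A point of $(\mathbb{C}^*)^d$ lies in $T_{N_{H_1}} \cap T_{N_{H_2}}$ iff it has lifts $v_1 \in H_1 \otimes \mathbb{C}$ and $v_2 \in H_2 \otimes \mathbb{C}$ with $v_1 - v_2 \in N$. Writing $w := v_1 - v_2 \in N$ uniquely as $w = w_1 + w_2$ with $w_i \in H_i$ (via the decomposition above), the containment $v_1 - v_2 = w_1 + w_2$ combined with $v_1 \in H_1 \otimes \mathbb{C}$, $v_2 \in H_2 \otimes \mathbb{C}$ forces $v_1 = w_1$ and $v_2 = -w_2$. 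Conversely, every $w \in N$ produces such a point. This yields a surjective homomorphism
\[
\phi: N \lto T_{N_{H_1}} \cap T_{N_{H_2}}, \qquad w \lmto \exp(2\pi i\, w_1).
\]

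To finish, I would compute $\ker \phi$: $\phi(w) = 1$ iff $w_1 \in N_{H_1}$, which, combined with $w \in N$ and the uniqueness of the decomposition, is equivalent to $w_2 = w - w_1 \in N_{H_2}$, i.e.\ $w \in N_{H_1} + N_{H_2}$. Hence $T_{N_{H_1}} \cap T_{N_{H_2}} \cong N/(N_{H_1}+N_{H_2})$. Since $N_{H_1} + N_{H_2}$ has rank $p+q = d$ inside the rank-$d$ lattice $N$, this quotient is finite of order $[N: N_{H_1}+N_{H_2}]$. The intersection consists of reduced (distinct) points because the tangent spaces $H_1 \otimes \mathbb{C}$ and $H_2 \otimes \mathbb{C}$ span $\mathbb{C}^d$ at every intersection point, so the intersection is transverse scheme-theoretically.

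The main bookkeeping point — really the only one — is the equivalence $w_1 \in N_{H_1} \iff w_2 \in N_{H_2}$ under $w \in N$, which is where transversality enters essentially through the uniqueness of the decomposition $w = w_1 + w_2$. Everything else is formal: the surjectivity of $\phi$ falls out of the characterization of lifts, and the counting is immediate from $|N/(N_{H_1}+N_{H_2})| = [N : N_{H_1}+N_{H_2}]$.
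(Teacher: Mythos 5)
Your proof is correct. The paper does not prove this proposition at all --- it is quoted from Katz's tropical toolkit (Proposition 6.1) --- and your argument via the universal cover $\exp(2\pi i\,\cdot):\mathbb{C}^d\to(\mathbb{C}^*)^d$, the splitting $\mathbb{C}^d=(H_1\otimes\mathbb{C})\oplus(H_2\otimes\mathbb{C})$, and the resulting isomorphism $T_{N_{H_1}}\cap T_{N_{H_2}}\cong N/(N_{H_1}+N_{H_2})$ is the standard, complete justification, including the reducedness of the intersection.
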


\begin{proof}[Proof of Theorem \ref{thm:main-int}]
Note that $\sT_{\cC_1}\wedge \sT_{\cC_2}$ is well-defined by Proposition \ref{prop:product-def-Cn}. Assume that $\cC_1$ and $\cC_2$ are two tropical cycles of dimension $p$ and $q$, respectively. Note that when $p+q < d,$ both sides of the equality are zero. Therefore, we assume that $p+q \geq d.$ We proceed with the following steps. Firstly, we show that in the transversal case:
\begin{itemize}
    \item [(a)] $\supp{\sT_{\cC_1} \wedge \sT_{\cC_2}} = \Log^{-1}(\cC_1 \cdot \cC_2) = \Log^{-1}(\cC_1 \cap \cC_2)$;
    \item [(b)] When $p+q= d,$ $\sT_{\cC_1}\wedge \sT_{\cC_2} = \sT_{\cC_1 \cdot \cC_2}$; 
    \item [(c)] When $p+q >d,$ $\sT_{\cC_1}\wedge \sT_{\cC_2} = \sT_{\cC_1 \cdot \cC_2}$. 
\end{itemize}
Secondly, in general:
\begin{itemize}
    \item [(d)] $\Log(\supp{\sT_{\cC_1} \wedge \sT_{\cC_2}}) = \cC_1 \cdot \cC_2$ when $p+q=d$;
    \item [(e)]  $\Log(\supp{\sT_{\cC_1} \wedge \sT_{\cC_2}}) = \cC_1 \cdot \cC_2$ when $p+q >n$.
\end{itemize}
To see $(a),$ note the by Theorem \ref{thm:wedge-cont-pot}(c), $\supp{\sT_{\cC_1} \wedge \sT_{\cC_2}} \subseteq \Log^{-1}(\cC_1 \cdot \cC_2) = \Log^{-1}(\cC_1 \cap \cC_2).$ Moreover, on $\cC_1 \cap \cC_2,$ the fibres of $\sT_{\cC_1}$ and $\sT_{\cC_2}$ have a non-zero intersection. 
\vskip 2mm
\noindent
To prove (b), let $a \in \cC_1 \cap \cC_2$ be an isolated point of intersection. We can choose a small ball $B_{\epsilon}(a) \subseteq \R^d$ such that $a$ is an isolated point of intersection $\sigma_1 \cap \sigma_2 \cap B,$ where $\sigma_1$ and $\sigma_2$ are cells of dimension $p$ and $q$ in $\cC_1$ and $\cC_2$ respectively. For any rational polyhedron $\sigma,$ let
\begin{align*}
    N_{\sigma}&:= N \cap \aff(\sigma),\\
    S^1(\sigma)&:= S^1 \otimes_Z \big(\Z^d /(\Z^d \cap \textrm{aff}(\sigma))\big), \\
    \pi_{\sigma}&:= \pi_{\textrm{aff}(\sigma)},
\end{align*}
 where $\pi_{\textrm{aff}(\sigma)}$ was defined after Definition \ref{def:T-H}. By Lemma \ref{lem:local-equality}
\begin{multline*}
\sT_{\cC_1} \wedge \sT_{\cC_2}{_{|_{\Log^{-1}(B)}}} = w_{\sigma_1} w_{\sigma_2} \mathbbm{1}_{\Log^{-1}(B)} \\ \int_{(x_1,x_2)\in S^1(\sigma_1)\times S^1(\sigma_2)}[\pi_{\sigma_1}^{-1}(x_1)] \wedge [\pi_{\sigma_2}^{-1}(x_2)]\, d\mu_{\sigma_1}(x_1) \otimes d \mu_{\sigma_2}(x_2).
\end{multline*}
Transversality of the fibres implies
$$
[\pi_{\sigma_1}^{-1}(x_1)] \wedge [\pi_{\sigma_2}^{-1}(x_2)] = [\pi_{\sigma_1}^{-1}(x_1) \cap \pi_{\sigma_2}^{-1}(x_2)].
$$
By Proposition~\ref{prop:int-numb-tori}, we have 
\[
\kappa = [\Z^d : \Z^d_{\mathrm{aff}(\sigma)} + \Z^d_{\mathrm{aff}(\sigma')}]
\]
distinct intersection points covering 
\[
\Log^{-1}(a) \simeq S^1(\sigma_1) \times S^1(\sigma_2).
\]
When \((x_1,x_2) \in \Log^{-1}(a) \simeq S^1(\sigma_1) \times S^1(\sigma_2)\) vary with respect to the normalised Haar measure, these \(\kappa\) points cover \(S^1(\sigma_1) \times S^1(\sigma_2) \simeq (S^1)^d\) with speed \(\kappa\). As a result,
\[
\begin{aligned}
&\int_{(x_1,x_2)\in S^1(\sigma_1)\times S^1(\sigma_2)} [\pi_{\sigma_1}^{-1}(x)] \wedge [\pi_{\sigma_2}^{-1}(x')] \, d\mu_{\sigma_1}(x) \otimes d\mu_{\sigma_2}(x') \\
&\quad = \int_{y \in (S^1)^d} \kappa \, [\pi_{\sigma_1 \cap \sigma_2}^{-1}(y)] \, d\mu_{\sigma_1 \cap \sigma_2}(y).
\end{aligned}
\]
This proves (b).

\vskip 2mm
To deduce (c), let \(\sigma = \sigma_1 \cap \sigma_2\) be a \((p + q - d)\)-dimensional cell in the intersection. Assume that \(0 \in \sigma\), by a translation, and let \(L := \mathrm{aff}(\sigma)^{\perp}\). By Proposition~\ref{prop:assoc-commut},
\[
(\sT_{\cC_1} \wedge \sT_{\cC_2}) \wedge \sT_L = \sT_{\cC_1} \wedge (\sT_{\cC_2} \wedge \sT_L).
\]
Note that \([N : N_{\sigma} + N_L] = 1\). Assume that \(w_{\sigma_1} = w_{\sigma_2} = 1\). As a result, if the multiplicity of \(\sT_{\cC_1} \wedge \sT_{\cC_2}\) at \(\sigma\) equals \(\kappa\), then the multiplicity of \((\sT_{\cC_1} \wedge \sT_{\cC_2}) \wedge \sT_L\) at the origin is also \(\kappa\). We also have:
\[
[N : N_{\sigma_1} + N_{\sigma_2 \cap L}] = 
\left[ \frac{N}{N_{\sigma}} : \frac{N_{\sigma_1} + N_{\sigma_2 \cap L}}{N_{\sigma}} \right] = 
\left[ \frac{N}{N_{\sigma}} : N_{\sigma_1 \cap L} + N_{\sigma_2 \cap L} \right] = 
[N : N_{\sigma_1} + N_{\sigma_2}].
\]

As a consequence, the intersection multiplicity induced on \(\sigma\) by \(\sT_{\cC_1} \wedge \sT_{\cC_2}\) equals the intersection multiplicity in Definition~\ref{def:stable-int}.
\vskip 2mm
To prove (d) note that if $\cC_1 + \epsilon b$ is the translation of the the tropical variety, where $ b\in \R^d$ and $\epsilon \in \R_{\geq 0},$ then $(e^{\epsilon b})^* \sT_{\cC_1}= \sT_{\cC_1 + \epsilon b}.$
Moreover, we have the SP-convergence of currents with continuous superpotentials. 
$$
(e^{\epsilon b})^* \sT_{\cC_1} \lto \sT_{\cC_1} \, ,\qquad \text{as $\epsilon \to 0.$}
$$
Therefore, by Theorem \ref{thm:wedge-cont-pot}, 
\begin{equation}\label{eq:current-perturb}
(e^{\epsilon b})^* \sT_{\cC_1} \wedge \sT_{\cC_2}= \sT_{\cC_1+ \epsilon b} \wedge \sT_{\cC_2}   \lto \sT_{\cC_1} \wedge \sT_{\cC_2}\, , \qquad \text{as $\epsilon \to 0.$} 
\end{equation}
Considering the support, we obtain the Hausdorff limit
$$
 \lim  \supp{(e^{\epsilon b})^* \sT_{\cC_1} \wedge \sT_{\cC_2}} \supseteq  \supp{\sT_{\cC_1} \wedge \sT_{\cC_2} } .
$$
We now note that for all $\epsilon,$ the number of intersection points in $(\cC_1+\epsilon b) \cap \cC_2$ is uniformly bounded by the number of $p$-dimensional cells in $\cC_1$ and $q$-dimensional cells in $\cC_2,$ the Hausdorff limit of $(\cC_1 + \epsilon b) \cap \cC_2$ is also zero dimensional. Now, by definition of $\cC_1 \cdot \cC_2$ it suffices to show that 
$$
 \lim  \supp{(e^{\epsilon b})^* \sT_{\cC_1} \wedge \sT_{\cC_2}} = \supp{\sT_{\cC_1} \wedge \sT_{\cC_2} },
$$
for any fixed generic $b$. This is also easy. Let $a_{\epsilon} \in (\cC_1 + \epsilon b) \cap  \cC_2.$ Since the translation by $\epsilon b$ does not change the slopes of the cells, as
$a_{\epsilon} \to a,$  the multiplicity for all $a_{\epsilon}$ remains constant for $\epsilon >0$, therefore $ \lim (e^{\epsilon b})^* \sT_{\cC_1} \wedge \sT_{\cC_2}$ has a non-zero mass at $\Log^{-1}(a).$
\vskip 2mm 
For Part (e), first observe that any $\lim (\cC_1+ \epsilon b ) \cap \cC_2$ is obtained by a translation $\epsilon b$, as $\epsilon \to 0,$ of finitely many $(p+q-d)$-dimensional cells. Therefore, $\cC_1 \cdot \cC_2$ is also of dimension $p+q -d,$ and the SP-convergence readily implies that the limit is independent of generic $b.$ Now, it only remains to show that
$$
 \lim  \supp{(e^{\epsilon b})^* \sT_{\cC_1} \wedge \sT_{\cC_2}} = \supp{\sT_{\cC_1} \wedge \sT_{\cC_2} }.
$$
Let $\sigma$ be a $p+q-d$ dimensional cell in $\cC_1 \cdot \cC_2,$ and by translation, assume that $0\in \sigma.$ Let $L = \aff(\sigma)^{\perp}.$ By Proposition \ref{prop:slicing-CSP}, 
$$
\big( (e^{\epsilon b})^* \sT_{\cC_1} \wedge \sT_{\cC_2} \big) \wedge \sT_{L} \lto \big( \sT_{\cC_1} \wedge \sT_{\cC_2} \big) \wedge \sT_{L}, \quad \text{as $\epsilon\to 0.$ } 
$$
By Part (d), the left-hand-side has mass at the origin. This show (e). 
\vskip 2mm
To deduce Part (f), recall that in Equation (\ref{eq:current-perturb}) since we can choose $b$ generically and the previous discussion. The balancing condition is also deduced by the fact that $\sT_{\cC_1} \wedge \sT_{\cC_2}$ is closed and Theorem \ref{thm:closed-balanced}.
%
%
%
%
\end{proof}

\begin{remark}\label{rem:non-positive-int}
When $\cC$ is a tropical cycle which is not positively weighted, then there exist positively weighted tropical cycles $\cC_1$ and $\cC_2$ such that
    $$
    \cC = \cC_1 - \cC_2.
    $$
    Therefore, if $\Sigma$ is compatible with both $\cC_1$ and $\cC_2,$ then by preceding lemma, $\overline{\sT}_{\cC}$ has a continuous superpotential in $X_{\Sigma}.$ In such a case, the wedge product $\overline{\sT}_{\cC} \wedge \sS,$ for any current $\sS$ is well-defined. In particular, we can define the stable intersection of two non-positively weighted tropical cycles, however, in this case, we might not have the Hausdorff convergence of the supports as in the positive case due to the cancellations.  
\end{remark}

Let us recall a few notable theorems from \cite{McMullen-polytope-alg}, \cite{Fulton--Sturmfels}, and \cite{Jensen-Yu}, and add the tropical currents into the picture. 

\begin{theorem}\label{thm:Q-alg}
   The assignment $ \cC \lmto \sT_{\cC} $ induces a $\mathbb{Q}$-algebra  isomorphism between
   \begin{itemize}
        \item [(a)] The $\mathbb{Q}$-algebra of tropical currents on $(\C^*)^d$ with the usual addition of currents and the wedge product.
        \item [(b)] The $\mathbb{Q}$-algebra generated by the tropical cycles in $\R^d$ with the natural addition  (Definition \ref{def:add-trop}) and stable intersection (Definition \ref{def:stable-int}).
       
   \end{itemize}
Further, considering the map $\Phi_m: (\C^*)^d \lto (\C^*)^d$,  $z\lmto z^m.$ We have the isomorphism of the following:
\begin{itemize}
\item [(a')] The $\mathbb{Q}$-algebra of $\Phi_m$-invariant tropical currents on $(\C^*)^d$ with the usual addition of currents and the wedge product.
\item [(b')] The $\mathbb{Q}$-algebra generated by the tropical fans in $\R^d$ with the natural addition  and stable intersection. 
     \item [(c')] The $\mathbb{Q}$-algebra generated by tropical hypersurfaces. 
      \item [(d')] The McMullen algebra of dimensional polytopes in $\mathbb{Q}^d$.
     \item [(e')] Chow group of the compact space obtained as a direct limit of all complete toric varieties. 
\end{itemize}
\end{theorem}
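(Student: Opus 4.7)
The plan is to establish the core isomorphism (a) $\cong$ (b) first using the intersection theorem already proved, then deduce the restricted isomorphism (a') $\cong$ (b') by identifying $\Phi_m$-invariance with scale invariance, and finally invoke the existing literature on Minkowski weights, the McMullen polytope algebra, and the Chow cohomology of toric varieties for the remaining identifications. For (a) $\cong$ (b), the assignment $\cC \lmto \sT_{\cC}$ is additive by Definition~\ref{def:add-trop} together with the definition of $\sT_{\cC}$ as a weighted sum over its top-dimensional cells, and is multiplicative on positively weighted cycles by Theorem~\ref{thm:main-int}. Using Remark~\ref{rem:non-positive-int} to write a general tropical cycle as a difference of positively weighted ones, and Proposition~\ref{prop:assoc-commut} for commutativity and associativity, the assignment extends by $\mathbb{Q}$-bilinearity to a $\mathbb{Q}$-algebra homomorphism. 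Surjectivity is by construction. For injectivity, one recovers $\cC$ from $\sT_{\cC}$: the support $|\cC|$ is $\Log(\supp{\sT_{\cC}})$, and the weight on each top-dimensional cell $\sigma$ is extracted as a local density of $\sT_{\cC}$ along a generic fibre of the submersion $\pi_{\aff(\sigma)}$.

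For (a') $\cong$ (b'), observe that because $\Log \circ \Phi_m = m \cdot \Log$, pullback by $\Phi_m$ on tropical currents corresponds to rescaling the underlying tropical cycle by $1/m$ (up to a combinatorial normalisation depending only on the codimension). Hence $\Phi_m$-invariance of $\sT_{\cC}$ is equivalent to $m^{-1}\cC = \cC$ as weighted polyhedral complexes. Since $\cC$ has only finitely many cells, this scale invariance forces every cell to contain the origin, i.e.\ $\cC$ is a tropical fan; restricting the isomorphism of the previous step to the invariant subalgebras then yields (a') $\cong$ (b').

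The identification of the algebra of tropical fans under stable intersection with the McMullen polytope algebra (d') is the content of \cite{Fulton--Sturmfels}, via the bijection between Minkowski weights on a complete simplicial fan and elements of the McMullen algebra; this also supplies (e') after passing to the direct limit over complete smooth toric fans, identifying the tropical algebra with the operational Chow cohomology of the limit. That tropical hypersurfaces generate the algebra of tropical fans, giving (c'), follows from \cite{Jensen-Yu} together with the realisation of each tropical hypersurface as a current $dd^c[\fq \circ \Log]$ via Theorem~\ref{thm:hyper-ddc}. The only verification required on our side is that the addition and product conventions of those references coincide with Definitions~\ref{def:add-trop} and \ref{def:stable-int}, which is immediate from their setup. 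The main technical obstacle is the injectivity in the first step together with the precise matching of multiplicity factors in the comparison $\Phi_m^*\sT_{\cC}$ versus $\sT_{m^{-1}\cC}$; once these are pinned down, everything else is a reorganisation of known results.
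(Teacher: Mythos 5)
Your proposal is correct and follows essentially the same route as the paper: (a)$\cong$(b) via Theorem~\ref{thm:main-int} and Proposition~\ref{prop:assoc-commut}, (a')$\cong$(b') via the observation that $\Phi_m$-invariance (with the normalisation $m^{p-d}$) forces the underlying cycle to be a fan, and the remaining identifications (c'),(d'),(e') cited from \cite{Fulton--Sturmfels}, \cite{Jensen-Yu} and \cite{Kazarnovski}. The extra detail you supply on injectivity (recovering the support and the cell weights as fibre densities) fills in a step the paper declares ``clear'' but does not change the argument.
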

\begin{proof}
By Proposition \ref{prop:assoc-commut}, Items (a) and (a') form a $\mathbb{Q}$-algebra. In Item (b), the fact that the tropical cycles in $\R^d$ generate an algebra is the content of \cite{Jensen-Yu}*{Theorem 2.6}, which can also be deduced from Theorem \ref{thm:main-int} and Proposition \ref{prop:assoc-commut}, then the isomorphism of (a) and (b) is also clear. 
\vskip 2mm
For the isomorphism between (a') and (b'), we observe that a tropical current of bidimension $(p,p)$, denoted $\sT_{\cC}$, is invariant under the map $\Phi,$ that is,
\[
m^{p-d} \, \Phi_m^* \sT_{\cC} = \sT_{\cC}
\]
if and only if $\cC$ is a tropical fan.

The isomorphism between (b') and (e') is discussed in \cite{Fulton--Sturmfels} and can also be deduced directly from \cite{BH}*{Theorem 4.7}. The isomorphism between (d') and (e') is proved in \cite{Fulton--Sturmfels}*{Theorem 4.2}, while that between (b') and (d') appears in \cite{Jensen-Yu}*{Theorem 5.1}. The equality of (b') and (c') is treated independently in \cite{Kazarnovski}*{Proposition 9} and \cite{Jensen-Yu}*{Corollary 5.2}.
\end{proof}

A nice generalisation of the isomorphism between items (a') and (d') to a homeomorphism in dimension 2, along with its applications to dynamical systems, is presented in \cite{DillerRoeder2025}.

\subsubsection{Calculating Intersection Multiplicities Using Monge-Amp\`ere Measures}
Using the equality of the supports in the previous section, we only need to prove the intersection multiplicities in the transversal case locally.

\subsubsection{The Real Monge--Amp\`ere Measures }
Let $\Omega\subseteq \R^d$ be an open subset and $u:\Omega \lto \R$ be a convex (hence continuous) function. The \emph{generalised gradient} of $u$ at $x_0\in \Omega$ is defined by 
$$
\nabla u(x_0) = \big\{\xi \in (\R^d)^*: u(x)-u(x_0) \geq \langle \xi , x- x_0 \rangle, \text{ for all $x\in \Omega$}   \big\}.
$$
In the above, $\langle ~ , ~ \rangle$ denotes the standard inner product in $\R^d,$ and $(\R^d)^*$ is the dual. The real Monge-Amp\`ere measure associated to a convex function $u$ on a Borel set $E \subseteq \Omega,$ is given by
$$
\textrm{MA} [u](E)  =  \mu \big( \bigcup_{y \in E} \nabla u (y)  \big),
$$
where $\mu$ is the Lebesgue measure on $(\R^d)^*.$
\vskip 2mm
It is interesting that for the tropical polynomials, one can compute the associate real Monge--Amp\`ere measures explicitly. Recall that, for any tropical polynomial, there is a natural subdivision of its Newton polytope which is dual to the tropical variety of it. See Figure for an example and \cite{brugale-a-bit, Maclagan-Sturmfels} for details.

\begin{lemma}[\cite{yger}*{Page 59},\cite{GPS}*{Proposition 2.7.4}]\label{lem:real-MA-dec}
    Let $\fq:\R^d \lto \R$ be a tropical polynomial associated tropical variety $\cC = V_{\textrm{trop}}(\fq),$ one has
    $$
     \textrm{MA}[\fq] = \sum_{a \in  \cC(0)} \textrm{Vol} \big(\{ a\}^* \big) \delta_{a},
    $$
where $\cC(0)$ is the $0$-dimensional skeleton of $\cC,$ and $\{ a\}^*$ is the dual of the vertex $a\in \cC(0).$     
\end{lemma}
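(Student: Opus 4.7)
The plan is to exploit the duality between the corner locus of a tropical polynomial and the regular subdivision it induces on its Newton polytope, reducing the Monge--Amp\`ere measure to a purely combinatorial volume count.

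First, I would record the elementary computation of the generalised gradient for $\fq(x)=\max_{\alpha\in A}\{c_\alpha+\langle\alpha,x\rangle\}$. Writing $A(x):=\{\alpha\in A:c_\alpha+\langle\alpha,x\rangle=\fq(x)\}$, a standard convexity argument (any affine supporting function at $x$ is a convex combination of the $\alpha$'s achieving the maximum) gives
\[
\nabla \fq(x)=\mathrm{conv}\,A(x)\subseteq (\R^d)^*.
\]
This is the face of the Newton polytope $\mathrm{Newt}(\fq)=\mathrm{conv}(A)$ dual to the cell of $\cC$ containing $x$ in its relative interior; in particular, its dimension equals the codimension of that cell in $\R^d$.

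Next, I would invoke the standard duality between a tropical hypersurface and the regular subdivision of its Newton polytope (\cite{Maclagan-Sturmfels}). The top-dimensional cells of this subdivision are precisely $\{a\}^*=\mathrm{conv}\,A(a)$ for $a\in\cC(0)$, and together they tile $\mathrm{Newt}(\fq)$, meeting only along lower-dimensional faces. Moreover, for any $y\in\R^d$ lying in a positive-dimensional cell $\sigma$ of $\cC$, the set $\nabla\fq(y)$ is a face of every $\{a\}^*$ corresponding to vertices $a\in\overline{\sigma}\cap\cC(0)$, hence of dimension $<d$, hence of Lebesgue measure zero.

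Now I would assemble the statement. For a Borel set $E\subseteq\R^d$, decompose $E = (E\cap\cC(0))\sqcup (E\setminus\cC(0))$. The image $\bigcup_{y\in E\setminus\cC(0)}\nabla\fq(y)$ is contained in the finite union of proper faces of the cells $\{a\}^*$, so it has $\mu$-measure zero. On the other hand,
\[
\bigcup_{a\in E\cap\cC(0)}\nabla\fq(a)=\bigcup_{a\in E\cap\cC(0)}\{a\}^*,
\]
and since distinct top-dimensional cells $\{a\}^*$ overlap only on boundaries (measure zero), we conclude
\[
\mathrm{MA}[\fq](E)=\mu\!\left(\bigcup_{y\in E}\nabla\fq(y)\right)=\sum_{a\in E\cap\cC(0)}\mathrm{Vol}(\{a\}^*),
\]
which is the asserted identity as a sum of point masses.

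The main technical point to verify carefully is that the generalised gradient really coincides with the dual face $\mathrm{conv}\,A(x)$; everything after that is finite-combinatorial bookkeeping, since $\fq$ has finitely many linear pieces and the Newton subdivision is finite, so no accumulation of lower-dimensional sets can create hidden mass.
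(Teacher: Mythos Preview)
The paper does not supply its own proof of this lemma: it is quoted from \cite{yger} and \cite{GPS}, with a pointer to \cite{Babaee} for further discussion. So there is nothing in the paper to compare against.

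Your argument is the standard one and is correct. The key identification $\nabla\fq(x)=\mathrm{conv}\,A(x)$ is exactly the subdifferential of a finite maximum of affine functions, and the remainder is the duality between the cell complex of $\cC$ and the regular subdivision of $\mathrm{Newt}(\fq)$: only vertices $a\in\cC(0)$ produce $d$-dimensional dual cells, everything else maps into a finite union of polytopes of dimension $<d$ and hence carries no Lebesgue mass.

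Two small points worth tightening. First, your sentence about $E\setminus\cC(0)$ landing in ``proper faces of the cells $\{a\}^*$'' should also explicitly cover points of $E$ in $\R^d\setminus\cC$, where $\fq$ is differentiable and $\nabla\fq(y)$ is a single exponent $\{\alpha\}$; this is harmless (a finite set of points), but as written you only discuss $y$ in a positive-dimensional cell of $\cC$. Second, in the degenerate case $\cC(0)=\varnothing$ (equivalently $\mathrm{Newt}(\fq)$ not full-dimensional) there are no cells $\{a\}^*$ to be a ``proper face of''; the clean fix is simply to observe that $\bigcup_y\nabla\fq(y)\subseteq\mathrm{Newt}(\fq)$, which then has $d$-dimensional measure zero, so both sides of the asserted identity vanish.
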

\noindent
A detailed discussion of the preceding theorem can be also found in \cite{Babaee}. 

\subsection{Polarisation}
For $n$ convex functions $u_1, \dots , u_d: \R^d \lto \R,$ their \emph{mixed Monge-Amp\`ere measure} is defined by 
$$
\widetilde{\text{MA}}[u_1, \dots , u_d] = \frac{1}{d!}  \sum_{k=1}^{d} \, \sum_{1 \leq j_1 < \dots < j_k \leq d} (-1)^{d-k} \, {\text{MA}}[u_{j_1}+ \dots + u_{j_k}].
$$
Recall that this is how the \emph{mixed volume} of $n$ convex bodies can be defined from the $n$-dimensional volume. Moreover, it is easy to check that for a convex function $u:\R^d \lto \R,$ $\text{MA}[u] = \widetilde{\text{MA}}[u, \dots , u].$
\vskip 2mm
It is customary at this point to prove the Bernstein--Khovanskii--Kushnirenko Theorem, which is simply obtained by taking the total mass from Lemma \ref{lem:real-MA-dec}.
\begin{proposition}
Let $\fq, \fq_1, \dots , \fq_d:\R^d \lto \R$ be tropical polynomials. We have the following facts:
    \begin{itemize}
        \item [(a)] $\textrm{MA}[\fq](\R^d) = \textrm{Vol}_d (\Delta_{\fq}),$ where $\Delta_{\fq}$ is the Newton polytope of $\fq.$
        \item [(b)] (Tropical Bernstein--Khovanskii--Kushnirenko Theorem) $\widetilde{\text{MA}}[\fq_1, \dots , \fq_d](\R^d) = \widetilde{\text{Vol}} (\Delta_{\fq_1}, \dots , \Delta_{\fq_d}),$ where $\widetilde{\text{Vol}}$ is the mixed volume.    
    \end{itemize}
\end{proposition}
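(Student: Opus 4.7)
The plan is to derive both (a) and (b) from Lemma~\ref{lem:real-MA-dec}, which explicitly decomposes $\textrm{MA}[\fq]$ as a sum of point masses $\textrm{Vol}(\{a\}^*)\,\delta_a$ indexed by the vertices $a \in \cC(0)$ of the tropical hypersurface $V_{\trop}(\fq)$. For part (a), I would first invoke the classical duality between a tropical hypersurface and the regular subdivision of its Newton polytope induced by the coefficients of $\fq$: the cells $\{a\}^*$ indexed by the vertices $a \in \cC(0)$ are exactly the top-dimensional cells of a polyhedral subdivision of $\Delta_{\fq}$. Since these cells cover $\Delta_{\fq}$ and meet only in lower-dimensional faces, summing their $d$-dimensional volumes recovers $\textrm{Vol}_d(\Delta_{\fq})$. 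Taking total mass in Lemma~\ref{lem:real-MA-dec} then gives $\textrm{MA}[\fq](\R^d) = \sum_{a \in \cC(0)} \textrm{Vol}(\{a\}^*) = \textrm{Vol}_d(\Delta_{\fq})$.

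For part (b), the key auxiliary fact is that the Newton polytope of a sum of tropical polynomials is the Minkowski sum of the individual Newton polytopes, i.e.\ $\Delta_{\fq_{j_1} + \dots + \fq_{j_k}} = \Delta_{\fq_{j_1}} + \dots + \Delta_{\fq_{j_k}}$. This is immediate from the definition, since
$$\max_{\alpha}\{a_\alpha + \langle \alpha, x\rangle\} + \max_{\beta}\{b_\beta + \langle \beta, x\rangle\} = \max_{\alpha,\beta}\{(a_\alpha + b_\beta) + \langle \alpha + \beta, x\rangle\}.$$
Applying part (a) to each sum appearing in the polarisation defining $\widetilde{\textrm{MA}}$ yields $\textrm{MA}[\fq_{j_1} + \dots + \fq_{j_k}](\R^d) = \textrm{Vol}_d(\Delta_{\fq_{j_1}} + \dots + \Delta_{\fq_{j_k}})$. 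Substituting into the formula
$$\widetilde{\textrm{MA}}[\fq_1,\dots,\fq_d](\R^d) = \frac{1}{d!}\sum_{k=1}^{d}\sum_{1\leq j_1<\dots<j_k\leq d}(-1)^{d-k}\,\textrm{MA}[\fq_{j_1}+\dots+\fq_{j_k}](\R^d)$$
produces on the right-hand side exactly the polarisation of the $d$-dimensional volume evaluated on Minkowski sums of the $\Delta_{\fq_i}$, which is by definition the mixed volume $\widetilde{\textrm{Vol}}(\Delta_{\fq_1}, \dots, \Delta_{\fq_d})$. This gives (b).

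The only genuinely non-trivial input is the combinatorial duality used in (a), namely that the duals $\{a\}^*$ of the vertices of $V_{\trop}(\fq)$ form a polyhedral subdivision of $\Delta_{\fq}$. This is the standard duality between regular subdivisions of the Newton polytope and tropical hypersurfaces, treated in detail in \cite{Maclagan-Sturmfels}; once it is invoked, both parts reduce to routine bookkeeping with the polarisation identity and the elementary behaviour of Newton polytopes under addition of tropical polynomials.
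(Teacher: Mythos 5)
Your proof is correct and follows exactly the route the paper intends: the paper itself only remarks that the proposition is ``simply obtained by taking the total mass from Lemma~\ref{lem:real-MA-dec}'', and your argument fills in precisely the two standard facts needed to do so — the duality between the vertices of $V_{\trop}(\fq)$ and the top-dimensional cells of the regular subdivision of $\Delta_{\fq}$ for (a), and the Minkowski-sum behaviour of Newton polytopes combined with the polarisation identity for (b). No gaps.
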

\begin{proposition}\label{cor:int_mult_MA}
    Assume that $\alpha_i, \beta_i \in \Z^d$ for $i=1, \dots , d.$ Let $\fq_i = \max \{ \langle \alpha_i, x \rangle , 
   \langle \beta_i , x \rangle \}$ be $n$ tropical polynomials. Then, 
    $$
    d! \, \widetilde{\text{MA}}[\fq_1, \dots , \fq_d] = \kappa \, \delta_{0},
    $$
    where $\kappa$ is given by the volume \emph{zonotope} of the Minkowski sum of the vectors $\sum_{i=1}^d [ \alpha_i - \beta_i].$ In addition, these multiplicities coincide with the intersection multiplicities in Definition \ref{def:stable-int} for $d$ tropical hypersurfaces. 
 \end{proposition}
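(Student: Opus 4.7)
The plan is to derive the identity from the polarisation formula for the mixed Monge--Amp\`ere measure combined with the explicit formula of Lemma~\ref{lem:real-MA-dec}. First I would observe that each summand $\fq_{j_1} + \cdots + \fq_{j_k}$ is a tropical Laurent polynomial whose tropical hypersurface is the union $H_{j_1} \cup \cdots \cup H_{j_k}$, where $H_i := \{x \in \R^d : \langle \alpha_i - \beta_i, x\rangle = 0\}$ is a rational hyperplane through the origin. Since we may assume the vectors $\alpha_i - \beta_i$ span $\R^d$ (otherwise $\kappa = 0$ and the stable intersection is empty by Definition~\ref{def:stable-int}(c)), for $k < d$ the union of $k$ such hyperplanes has no $0$-dimensional cells and so $\textrm{MA}[\fq_{j_1} + \cdots + \fq_{j_k}] = 0$ by Lemma~\ref{lem:real-MA-dec}. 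For $k = d$, the unique $0$-cell is the origin. Consequently the polarisation defining $d!\,\widetilde{\textrm{MA}}[\fq_1,\ldots,\fq_d]$ collapses to the single term
\[
d!\,\widetilde{\textrm{MA}}[\fq_1, \ldots, \fq_d] = \textrm{MA}[\fq_1 + \cdots + \fq_d].
\]

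Next I would identify the mass of $\textrm{MA}[\fq_1 + \cdots + \fq_d]$ at the origin. Its Newton polytope is the Minkowski sum $\sum_{i=1}^d [\alpha_i, \beta_i]$, i.e.\ the zonotope generated by the $d$ segments. Because both exponents $\alpha_i$ and $\beta_i$ achieve the maximum in $\fq_i$ at $x=0$, all $2^d$ vertices of the zonotope lie in the cell dual to $\{0\}$, so this dual cell equals the whole Newton polytope. Applying Lemma~\ref{lem:real-MA-dec} gives
\[
\textrm{MA}[\fq_1 + \cdots + \fq_d] = \textrm{Vol}_d\!\Bigl(\textstyle\sum_{i=1}^d [\alpha_i, \beta_i]\Bigr)\delta_0 = \kappa \, \delta_0,
\]
where $\kappa = |\det[\alpha_1 - \beta_1,\ldots,\alpha_d - \beta_d]|$ is the top-dimensional volume of the zonotope (a classical formula for parallelepipeds). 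This proves the first assertion.

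Finally, to match $\kappa$ with the stable intersection multiplicity, I would use that, by Remark~\ref{rem:trop-hyper-def}, the weight of $\Trop(\fq_i)$ on its unique facet $H_i$ equals the lattice length $w_i := \gcd(\alpha_i - \beta_i)$, with primitive normal $v_i' = (\alpha_i - \beta_i)/w_i$. A standard lattice duality yields $[\Z^d : N_{H_1} + \cdots + N_{H_d}] = |\det[v_1',\ldots,v_d']|$, where $N_{H_i} = (v_i')^{\perp} \cap \Z^d$; this can be checked via the injective map $\Z^d \to \Z^d$, $x \mapsto (\langle v_i', x\rangle)_i$, whose image has index $|\det[v_i']|$. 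Iterating Definition~\ref{def:stable-int}(a), which is justified by the associativity in Proposition~\ref{prop:assoc-commut}, the stable intersection multiplicity at the origin equals
\[
\prod_{i=1}^d w_i \cdot |\det[v_1',\ldots,v_d']| = |\det[\alpha_1 - \beta_1,\ldots,\alpha_d - \beta_d]| = \kappa,
\]
matching the mass computed above. The subtlest point is Step~1, where one must verify the vanishing of the polarisation contributions when $k < d$; if some of the $\alpha_i - \beta_i$ are parallel then the determinant forces $\kappa = 0$ and a direct inspection shows both sides degenerate to zero consistently, preserving the equality.
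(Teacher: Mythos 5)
Your first half --- the computation $d!\,\widetilde{\textrm{MA}}[\fq_1,\dots,\fq_d]=\textrm{MA}[\fq_1+\cdots+\fq_d]=\kappa\,\delta_0$ via the collapse of the polarisation and the identification of the dual cell of the origin with the zonotope $\sum_i[\alpha_i,\beta_i]$ --- is correct and is essentially the argument in the paper (the paper kills the $k<d$ terms by noting their Newton polytopes have dimension $<d$, you kill them by noting the corresponding tropical hypersurfaces have no $0$-cells; these are equivalent).

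The second half is where you genuinely diverge from the paper, and it contains a false step. Your ``standard lattice duality'' $[\Z^d : N_{H_1}+\cdots+N_{H_d}]=|\det[v_1',\dots,v_d']|$ is not true. Take $d=3$, $v_1'=(1,0,0)$, $v_2'=(0,1,0)$, $v_3'=(1,1,2)$: already $N_{H_1}+N_{H_2}=\langle e_2,e_3\rangle+\langle e_1,e_3\rangle=\Z^3$, so the left-hand side is $1$, while $|\det[v_1',v_2',v_3']|=2$. The sketch you give (the map $x\mapsto(\langle v_i',x\rangle)_i$ has image of index $|\det|$) does not control the index of the \emph{sum} of the kernels. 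Moreover, iterating Definition~\ref{def:stable-int}(a) does not produce the single index $[\Z^d:\sum_i N_{H_i}]$; it produces a telescoping product
\[
\prod_{k=1}^{d-1}\bigl[\Z^d : N_{L_{k}}+N_{H_{k+1}}\bigr],\qquad L_k:=H_1\cap\cdots\cap H_k,
\]
which in the example above equals $1\cdot 2=2$ and does agree with $|\det|$ --- but that identity is exactly what you would still need to prove, and it is the nontrivial content of this step. So your final formula $\prod_i w_i\,|\det[v_1',\dots,v_d']|=\kappa$ is the right answer with a broken justification. The paper avoids this lattice combinatorics entirely: it invokes Rashkovskii's formula $\int_{\Log^{-1}(U)}(dd^c[\fq\circ\Log])^d=d!\,\textrm{MA}[\fq](U)$, polarises it to identify $\sT_{\Trop(\fq_1)}\wedge\cdots\wedge\sT_{\Trop(\fq_d)}$ with $d!\,\widetilde{\textrm{MA}}[\fq_1,\dots,\fq_d]$, and then uses Theorem~\ref{thm:main-int} (whose proof of the multiplicity statement goes through Proposition~\ref{prop:int-numb-tori} on intersections of subtori) to conclude that this mass is the stable intersection multiplicity. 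To salvage your combinatorial route you must either prove the telescoping identity above directly, or reduce to the paper's analytic argument.
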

    
\begin{proof}
 Note that $\Delta_{\fq_i} $ is the line segment between $\alpha_i$ and $\beta_i.$ Moreover, in the definition of 
 $\widetilde{\text{MA}}[\fq_1, \dots , \fq_d]$ only $\textrm{Vol}( \sum_{i=1}^d [\alpha_i - \beta_i])$ possibly has a non-zero $n$-dimensional volume.  Finally, the origin is the only $0$-dimensional cell of the tropical variety of polynomial $\fq_1 + \dots + \fq_d$ , if and only if, $\{\alpha_1-\beta_1, \cdots ,  \alpha_d-\beta_d\}$ forms a linearly independent set.  Therefore, $d! \, \textrm{MA}[\fq_1 + \dots + \fq_d]= \kappa \, \delta_{0}.$ To prove the second statement, we first claim that any Borel subset $\Omega \subseteq \R^n,$ 
 $$
 \int_{\Log^{-1}(U)} \sT_{\Trop(\fq_1)} \wedge \dots \wedge  \sT_{\Trop(\fq_d)}= d! \, \widetilde{\text{MA}}[\fq_1, \dots , \fq_d ](\Omega).
 $$
 To see this, recall that ${\sT_{\Trop(\fq_i)}} = dd^c [\fq_i \circ \Log],$ by Theorem \ref{thm:hyper-ddc} and Remark \ref{rem:trop-hyper-def}. Further, by \cite{Rash}
 $$
\int_{\Log^{-1}(U)} \big(dd^c [\fq \circ \Log]\big)^d =  d! \, {\text{MA}}[\fq] (U).
 $$
 Polarising both sides proves the claim. This implies that the tropical intersection multiplicities coincide with the multiplicities in this proposition for hypersurfaces. 
\end{proof}








\section{Slicing Tropical Currents}
\begin{proposition} \label{prop:slice}
Let $\cC$ be a $p$-dimensional positively weighted tropical cycle in $\R^d$ with $p\geq 1.$ Assume that  $S\subseteq (\C^*)^d$ is an algebraic hypersurface with transversal intersection with $\sT_{\cC}.$ Then, $[S] \wedge \sT_{\cC} $ is admissible and it is a closed positive current of bidimension $(p-1,p-1)$ given by
$$
[S] \wedge \sT_{\cC} = \sum_{\sigma \in \cC} w_{\sigma}\int_{x \in S_{N(\sigma)}} \mathbbm{1}_{\Log^{-1}(\sigma^{\circ})} \big[ S \cap \pi_{\mathrm{aff}(\sigma)}^{-1}(x) \big] \ d\mu(x).
$$
\end{proposition}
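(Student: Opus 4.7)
The plan is to write $[S] = dd^c\log|f|$ using a local holomorphic defining equation $f$ of $S$, and then compute $[S]\wedge \sT_{\cC} = dd^c(\log|f|\cdot \sT_{\cC})$ via the Bedford--Taylor--Demailly formalism combined with the fiberwise structure of $\sT_{\cC}$. First I would verify admissibility. Because $S$ intersects $\sT_{\cC}$ transversally, the restriction $f_{|\pi_{\aff(\sigma)}^{-1}(x)}$ is a nonzero holomorphic function for every top-dimensional cell $\sigma\in\cC$ and $\mu$-almost every $x\in S_{N(\sigma)}$, so $\log|f|$ is locally integrable on each such fiber. Integrating over the base by Fubini shows that $\log|f|$ is locally $\sT_{\cC}$-integrable, and its unbounded locus $S$ meets $\supp{\sT_{\cC}}$ in real codimension two inside $\supp{\sT_{\cC}}$. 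The Fornæss--Sibony--Demailly criterion recalled in the introduction then gives a well-defined closed positive current $[S]\wedge \sT_{\cC}$ of the correct bidimension $(p-1,p-1)$.

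Next I would exploit the disintegration
\[
\sT_{\cC} = \sum_{\sigma\in\cC} w_\sigma\, \mathbbm{1}_{\Log^{-1}(\sigma)} \int_{x\in S_{N(\sigma)}} [\pi_{\aff(\sigma)}^{-1}(x)]\, d\mu(x).
\]
For a compactly supported test form $\varphi$ of bidegree $(p-1,p-1)$, Fubini applied to $\langle dd^c(\log|f|\cdot\sT_{\cC}),\varphi\rangle$ gives
\[
\sum_\sigma w_\sigma \int_{S_{N(\sigma)}} \biggl( \int_{\pi_{\aff(\sigma)}^{-1}(x)\cap\Log^{-1}(\sigma^\circ)} \log|f|\, dd^c\varphi \biggr) d\mu(x),
\]
where only the top-dimensional strata of $\cC$ contribute since the lower-dimensional boundary carries no mass for $\sT_{\cC}$. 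On each fiber $\pi_{\aff(\sigma)}^{-1}(x)\simeq(\C^*)^p$, transversality guarantees that the classical Poincar\'e--Lelong formula applies:
\[
dd^c\bigl(\log|f|\cdot[\pi_{\aff(\sigma)}^{-1}(x)]\bigr) = [S\cap \pi_{\aff(\sigma)}^{-1}(x)].
\]
Pushing the $dd^c$ back inside and reassembling the iterated integral yields the asserted formula; positivity and closedness are then automatic from the Bedford--Taylor--Demailly construction.

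The main technical difficulty I anticipate is the rigorous justification of the interchange of $dd^c$ with the fiberwise integral, and the treatment of what happens near the relative boundary of each top cell $\sigma$ where several fibrations meet. Both issues can be handled by working locally on small open subsets on which a single top cell $\sigma$ contributes (the strata of real dimension strictly less than $d+p$ carry no mass for the closed positive $(p,p)$-current $\sT_{\cC}$), and then combining dominated convergence, the local $L^1$-integrability of $\log|f|$ along the fibers, and the continuity of Bedford--Taylor products under uniform approximation of the potential by bounded psh functions $\log(|f|+\varepsilon)$.
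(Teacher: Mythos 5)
Your overall strategy is the same as the paper's: disintegrate $\sT_{\cC}$ into its fibres, apply the Lelong--Poincar\'e formula on each fibre $\pi_{\aff(\sigma)}^{-1}(x)\simeq(\C^*)^p$, and reassemble. The admissibility discussion and the interchange of $dd^c$ with the fibrewise integral are fine. The gap is in your treatment of the boundary. Working locally on open sets meeting only one top-dimensional cell identifies $[S]\wedge\sT_{\cC}$ with the right-hand side only on the complement of $S\cap\Log^{-1}\bigl(\bigcup_\sigma\partial\sigma\bigr)$; it says nothing about whether the product charges that set. Your justification --- that the strata of real dimension less than $d+p$ carry no mass for $\sT_{\cC}$ --- does not suffice: a Bedford--Taylor product $dd^c(u\,\sT)$ can perfectly well charge a set that $\sT$ itself does not charge (already $dd^c\log|z|\wedge[\C]=\delta_0$ charges the origin). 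Concretely, each truncated fibre current $\mathbbm{1}_{\Log^{-1}(\sigma^\circ)}[\pi_{\aff(\sigma)}^{-1}(x)]$ is not closed, so the fibrewise Lelong--Poincar\'e formula produces an error term supported on $\Log^{-1}(\partial\sigma)$, and these error terms must be shown to vanish after summation rather than assumed away.

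The paper closes exactly this gap with a dimension argument: the total discrepancy $\sR_{\cC}$ between $[S]\wedge\sT_{\cC}$ and the claimed fibrewise expression is a \emph{normal} current of bidimension $(p-1,p-1)$ (a difference of the closed positive current $[S]\wedge\sT_{\cC}$ and a sum of normal pieces), supported on $S\cap\Log^{-1}\bigl(\bigcup_\sigma\partial\sigma\bigr)$. That set has Cauchy--Riemann dimension strictly less than $p-1$, so Demailly's first theorem of support forces $\sR_{\cC}=0$. Some such support-theorem or mass-estimate argument is indispensable; without it your proof only establishes the formula away from the codimension-one skeleton of $\cC$.
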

\begin{proof} 
The idea of the proof is similar to that of \cite[Proposition 4.11]{BH}. Let $f$ be a polynomial with vanishing $S$ in $(\CC^*)^d$. Assume that $\Log^{-1}(\sigma^{\circ})\cap S \neq \varnothing,$ for a $p$-dimensional cell $\sigma \in \cC,$ then for each fibre, $\pi_{\sigma}^{-1}(x):= \pi_{\aff(\sigma)}^{-1}(x),$ the transversality assumption allows for the application of the Lelong--Poincar\'e formula to deduce
$$
    \quad dd^c \big(\log|f|\mathbbm{1}_{\Log^{-1}(\sigma^{\circ})}\big[\pi^{-1}_{\sigma}(x) \big] \big)  = \mathbbm{1}_{\Log^{-1}(\sigma^{\circ})} [ S \cap \pi^{-1}_{\sigma}(x) \big] + \sR_{\sigma}(x), 
$$
where $\sR_{\sigma}(x)$ is a $(p-1, p-1)$-bidimensional current. The support of $\sR_{\sigma}(x)$ lies in the boundary of $\Log^{-1}(\sigma),$ as $\sR_{\sigma}(x)$ is the difference of two currents that coincide in any set of form $ \Log^{-1}(B),$ where $B\subseteq \R^d$ is a small ball with
  $$
  B\cap \sigma' = \varnothing, \quad \text{for a $p$-dimensional cell  $\sigma'\in \cC, \sigma'\neq \sigma$ },
  $$
and both vanish outside $\Log^{-1}(\sigma).$
Integrating along the fibers, and adding for all $p$-dimensional cells $\sigma \in \cC,$ we obtain
\begin{equation*}
[S] \wedge {\sT_{\cC}}  = \sum_{\sigma \in \cC} w_{\sigma}\int_{ x \in S_{N(\sigma)}} \mathbbm{1}_{\Log^{-1}(\sigma^{\circ})} \big[ S \cap \pi_{\mathrm{aff}(\sigma)}^{-1}(x) \big] \ d\mu(x) + \sR_{\cC},
\end{equation*}
where $\sR_{\cC}$ is $(p-1,p-1)$-dimensional current. We claim that $\sR_{\cC}$ is \emph{normal}, \textit{i.e.} $\sR_{\cC}$ and $d\sR_{\cC}$ have measure coefficients; $\sR_{\cC}$ is a difference of two normal currents, where the first current $[S] \wedge {\sT_{\cC}}$ is a positive closed current, and the second current is an addition of normal pieces. Moreover, the support of $\sR_{\cC}$ is a subset of $S$ as it is a difference of two currents that both vanish outside $S.$ As a result, the current $\sR_{\cC}$ is supported on $S\, \cap \, \bigcup_{\sigma} \partial \Log(\sigma) .$ This set is a real manifold of Cauchy--Riemann dimension less than $p-1$, therefore by Demailly's first theorem of support the normal current $\sR_{\cC}$ vanishes; see also the discussion following \cite[Proposition 4.11]{BH}.
\end{proof}

\begin{corollary}
Let $H \subseteq \R^d$ be a rational plane of dimension $r\geq d-p$ and $A:= a+ H,$ a translation of $H$ for $a \in \R^d.$ Assume also that  
$\cC \subseteq \R^d$ is a tropical variety of dimension $p$ that intersects $A$ transversely. Then 
$$
[(e^{-a})T_{H\cap \Z^d}] \wedge \sT_{\cC}
$$
can be viewed as a tropical current of dimension $p-(d-r)$ in the complex subtori $T^A:= (e^{-a})\, T_{H \cap Z^d}\subseteq (\C^*)^d.$
\end{corollary}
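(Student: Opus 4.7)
The plan is an induction on the codimension $d-r$, with Proposition~\ref{prop:slice} providing the base case.

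Since $H$ is rational of dimension $r$, the lattice $H\cap\Z^d$ is saturated of rank $r$, so any $\Z$-basis of $H\cap\Z^d$ extends to a $\Z$-basis of $\Z^d$; the dual gives primitive vectors $m_1,\dots,m_{d-r}\in\Z^d$ whose common zero locus is $H$. Setting $c_i:=e^{-\langle m_i,a\rangle}$, each $S_i:=\{z:z^{m_i}=c_i\}$ is an algebraic hypersurface (a translate of the hyperplane subtorus $T_{m_i^\perp\cap\Z^d}$), and $T^A=S_1\cap\cdots\cap S_{d-r}$ is a transverse complete intersection. In the base case $d-r=1$, the torus $T^A$ is itself an algebraic hypersurface and the transversality hypothesis coincides with that of Proposition~\ref{prop:slice}, so the fibre-integral formula there produces the slice. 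For each top cell $\sigma\in\cC$ meeting $A$ transversely in a $(p-1)$-cell $\tau$, Proposition~\ref{prop:int-numb-tori} identifies each fibre $S_1\cap\pi_{\aff(\sigma)}^{-1}(x)$ as a finite disjoint union of $[N_{\aff(\sigma)}:N_{\aff(\tau)}]$ cosets of $T_{\aff(\tau)\cap\Z^d}$ inside $T^A$, and averaging against the Haar measure on $S_{N(\sigma)}$ reproduces, after the identifications $T^A\simeq(\C^*)^{d-1}$ and $A\simeq\R^{d-1}$ induced by the chosen basis of $H\cap\Z^d$, the complex tropical current on $T^A$ associated to $\cC\cap A$, with natural multiplicities $w_\sigma\cdot[N_{\aff(\sigma)}:N_{\aff(\sigma)\cap H}]$.

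For the inductive step ($d-r\geq 2$), set $H':=m_1^\perp$ and $A':=a+H'$. The base case identifies $[T^{A'}]\wedge\sT_{\cC}$ with a complex tropical current on $T^{A'}\simeq(\C^*)^{d-1}$ associated to $\cC\cap A'$. Inside $T^{A'}$ the subtorus $T^A$ corresponds to the rational affine subspace $A\subseteq A'$ of codimension $(d-r)-1$, meeting $\cC\cap A'$ transversely (inherited from the original hypothesis on $A$). The inductive hypothesis, applied inside $T^{A'}$, identifies $[T^A]\wedge\bigl([T^{A'}]\wedge\sT_{\cC}\bigr)$ with the desired tropical current on $T^A$, and associativity of the wedge product of currents with continuous superpotentials (Proposition~\ref{prop:assoc-commut}, applicable by Theorem~\ref{thm:trop-curr-CSP}) rewrites this as $[T^A]\wedge\sT_{\cC}$.

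The main obstacle is verifying that the admissibility hypothesis of Proposition~\ref{prop:slice} is preserved under the induction: after slicing $\sT_{\cC}$ by $[T^{A'}]$, the resulting tropical current on $T^{A'}$ must still admit transverse intersection with the hypersurface cut out by $S_2$ inside $T^{A'}$. This descends from the transversality of $A$ with $\cC$ via compatibility of tangent spaces in the tower $A\subseteq A'\subseteq\R^d$. Parallel bookkeeping of lattice indices across the induction then yields the expected natural multiplicities on the transverse intersection $\cC\cap A$.
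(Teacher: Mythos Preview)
Your overall strategy—write $T^A$ as a complete intersection of hypersurface subtori $S_1,\dots,S_{d-r}$ and iterate Proposition~\ref{prop:slice}—is sound and is in the same spirit as the paper's argument. The paper compresses the induction: it reduces to $a=0$, applies the fibre-integral formula of Proposition~\ref{prop:slice} directly (tacitly allowing $S=T^A$ of arbitrary codimension, which amounts to iterating the hypersurface case), and then simply observes that each fibre $T^A\cap\pi_{\aff(\sigma)}^{-1}(x)$ is a finite union of translated subtori of $T^A$ and that the transverse measures $d\mu$ remain Haar, so the result is a tropical current on $T^A$.

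There is, however, a genuine gap in your inductive step. You invoke Proposition~\ref{prop:assoc-commut} via Theorem~\ref{thm:trop-curr-CSP} to justify rewriting $[T^A]\wedge\bigl([T^{A'}]\wedge\sT_{\cC}\bigr)$ as $[T^A]\wedge\sT_{\cC}$. But $[T^A]$ and $[T^{A'}]$ are integration currents along \emph{single} subtori, not tropical currents (which are Haar averages of such fibres); Theorem~\ref{thm:trop-curr-CSP} does not apply to them, and integration currents along subvarieties do not in general have continuous superpotentials. Moreover, your outer slice takes place inside $T^{A'}\simeq(\C^*)^{d-1}$ while the target $[T^A]\wedge\sT_{\cC}$ lives in $(\C^*)^d$; matching these is a compatibility of slicing across ambient spaces, not the superpotential associativity of Proposition~\ref{prop:assoc-commut}. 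The clean fix is to avoid abstract associativity and iterate the explicit formula of Proposition~\ref{prop:slice} instead: after one hypersurface slice the result is again an average of integration currents along subtori with respect to Haar measures, so Proposition~\ref{prop:slice} applies verbatim in the smaller torus, and the identification with the ambient slice is immediate from the formula itself.

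Two minor points. The index $[N_{\aff(\sigma)}:N_{\aff(\tau)}]$ is ill-defined, as these lattices have ranks $p$ and $p-1$; the number of components you want is $[N:N_{\sigma}+N_{H}]$. Also, Proposition~\ref{prop:int-numb-tori} is stated only in complementary dimension; the extension you use (quotient by $T_{H\cap H_\sigma\cap\Z^d}$ and then apply the complementary case) is routine but should be stated.
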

\begin{proof}
Note that the hypothesis implies that the intersection $T^A \cap \,\pi_{\mathrm{aff}(\sigma)}^{-1}(x)$ is transversal for any ${x \in S_{N(\sigma)}}.$  By translation, it is sufficient to prove the statement for $a=0.$ By the preceding theorem, 
$$
[T^A] \wedge \sT_{\cC} = \sum_{\sigma \in \cC} w_{\sigma}\int_{x \in S_{N(\sigma)}} \mathbbm{1}_{\Log^{-1}(\sigma^{\circ})} \big[ T^A \cap \pi_{\mathrm{aff}(\sigma)}^{-1}(x) \big] \ d\mu(x).
$$
The sets  $ T^A \cap \pi_{\mathrm{aff}(\sigma)}^{-1}(x)$ can be understood as the toric sets in $T^A$. We can conclude since each every $d\mu$ associated to each $\sigma$ is a Haar measure. 
\end{proof}

\begin{theorem}\label{thm:slicing}
Let $M \subseteq (\mathbb{C}^*)^{d-p}$ and $N \subseteq (\C^*)^{p}$ be two bounded open subsets such that $N$ contains the real torus $(S^1)^p$. Let $\pi : M \times N \lto M$ be the canonical projection. Let $\sT_n$ be a sequence of positive closed $(p, p)$-bidimensional currents on $M \times N$ such that $\overline{\supp{\sT_n}} \cap (M \times \partial \overline{N}) = \varnothing$. Assume that $\sT_n \lto \sT$ and $\supp{\sT} \subseteq M \times (S^1)^p$. Then we have the following convergence of slices
\[
\langle \sT_n | \pi| x\rangle \lto \langle \sT | \pi| x\rangle \quad \text{for every } x \in M.
\]
Note that all the above slices are well-defined for all $x \in M$.
\end{theorem}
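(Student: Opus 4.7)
The plan is to prove that every weak-$*$ limit of the slice measures $\langle \sT_n | \pi | x\rangle$ equals the target slice $\mu_x := \langle \sT | \pi | x\rangle$, so that compactness then forces the full sequence to converge. I first fix $x\in M$ and note that, by the hypothesis that $\overline{\supp{\sT_n}}$ avoids $M \times \partial \overline N$, the slices $\langle \sT_n | \pi | x\rangle$ are positive measures of uniformly bounded mass sitting in a fixed compact set $K \subset N \subset (\C^*)^p$ bounded away from the coordinate hyperplanes. Extracting a subsequence I obtain a weak-$*$ limit $\nu$, and by Theorem \ref{thm:Berteloot-Dinh} applied in a local form, the semicontinuity $\nu(\phi) \leq \mu_x(\phi)$ holds for every plurisubharmonic $\phi$ defined in a neighbourhood of the supports.

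The first use of the inequality locates $\nu$. The function
\[
\phi(z) = \sum_{i=1}^{p} \big(|z_i|^2 + |z_i|^{-2} - 2\big)
\]
is nonnegative and psh on $(\C^*)^p$, vanishing exactly on $(S^1)^p$; since $\supp{\sT} \subseteq M \times (S^1)^p$ gives $\mu_x(\phi)=0$, the inequality forces $\nu(\phi)=0$, whence $\supp{\nu} \subseteq (S^1)^p$. The second use matches Fourier coefficients. For every $\alpha \in \Z^p$ the monomial $z^\alpha$ is holomorphic on $(\C^*)^p$, so the four functions $\pm \mathrm{Re}(z^\alpha)$ and $\pm \mathrm{Im}(z^\alpha)$ are all psh (indeed pluriharmonic) on $(\C^*)^p$. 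Applying the semicontinuity to each of them gives $\nu(z^\alpha) = \mu_x(z^\alpha)$ for every $\alpha \in \Z^p$, and since $\nu$ and $\mu_x$ are positive measures on the compact real torus $(S^1)^p$ sharing all of its characters, Fourier uniqueness implies $\nu = \mu_x$.

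The main obstacle is that Theorem \ref{thm:Berteloot-Dinh} is stated for $\phi$ psh on $\C^p$, whereas the key test functions above---$|z_i|^{-2}$ and $\mathrm{Re}(z^\alpha)$ for $\alpha \notin \N^p$---are psh only on $(\C^*)^p$ and blow up at the coordinate hyperplanes. The supports of $\sT_n$ and $\sT$ are, however, uniformly contained in a compact subset $K$ of $(\C^*)^p$, on which these functions are smooth and bounded. One may therefore either extend them to globally psh functions on $\C^p$ (by adding a large multiple of $|z|^2$ and applying a smooth cutoff equal to one near $K$ and vanishing near the hyperplanes $\{z_i=0\}$), or reinterpret the local argument of \cite{Berteloot--Dinh}, whose core computation depends only on the behaviour of $\phi$ in a neighbourhood of the supports. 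Either route delivers the required semicontinuity and closes the proof.
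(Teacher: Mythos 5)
Your proof is correct and follows essentially the same route as the paper: extract a weak-$*$ cluster value of the slice measures, apply the Berteloot--Dinh semicontinuity to $\pm\mathrm{Re}(z^\alpha)$ and $\pm\mathrm{Im}(z^\alpha)$ to match Fourier coefficients on $(S^1)^p$, and conclude by Fourier uniqueness. Your two refinements --- locating the support of the cluster value inside $(S^1)^p$ via the nonnegative psh function $\sum_i(|z_i|^2+|z_i|^{-2}-2)$, and replacing the monomials (psh only on $(\C^*)^p$) by globally psh functions agreeing with them near the uniformly compact supports (the max-gluing with $A|z|^2-B$ is the clean way to do this, since a smooth cutoff by itself does not preserve plurisubharmonicity) --- make explicit two points the paper's proof glosses over.
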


\begin{proof} 
Since all the currents $\sT_n$ and $\sT$ are horizontal-like, the slices are well-defined, and we prove that the slices have the same cluster value. Let $\sS$ be any cluster value of $\langle \sT_n | \pi| x\rangle.$   Note that such $\sS$ always exists by Banach--Alaoglu  theorem. As both measures $\sS$ and $\langle \sT | \pi| x\rangle,$ are supported $\{x\}\times (S^1)^p$ to prove their equality, it suffices to prove that they have the same Fourier coefficients. By Theorem \ref{thm:Berteloot-Dinh}, we have 
   $$
    \langle \sS , \phi \rangle \leq  \langle \sT | \pi| x\rangle (\phi),
    $$
    for every plurisubharmonic function $\phi$ on $\C^d,$ and the mass of $\sS$ coincides with the mass of $\langle \sT | \pi| x\rangle.$ Now, note that if  $\phi$ is pluriharmonic, then $-\phi$ and $\phi$ are plurisubharmonic. As a result, 
    $$
    \langle \sS , \phi \rangle =  \langle \sT | \pi| x\rangle (\phi),
    $$
    for every pluriharmonic function. Recall that if $ f$ is a holomorphic function, then $\text{Re}(f)$ and $\text{Im}(f)$ are pluriharmonic. We now consider the elements of the Fourier basis $f(\theta)= \exp{2 \pi i \langle \nu , \theta \rangle }$ for $\nu \in \Z^d.$ Then we have the equality 
    $$
    \langle \sS , f \rangle =  \langle \sT | \pi| x\rangle (f).
    $$
This implies that the Fourier measure coefficients of both $\sS$ and $\langle \sT | \pi| x\rangle $ coincide. 
\end{proof}

\begin{lemma}\label{lem:radon}
    Let $\cC\subseteq \R^d$ be a tropical variety of dimension $p$, and $L$ be a rational $(d-p)$-dimensional plane such that $L$ is transversal to all the affine extensions $\text{aff}(\sigma)$ for $\sigma \in \cC.$
    Assume that  $\sT$ is a positive closed current of bidimension $(p,p)$ on a smooth projective toric variety $X_{\Sigma}$ compatible with $\cC + L$ such that $\supp{\sT} \subseteq \supp{\sT_{\cC}}.$ Further, for all $a \in \R^d,$
    $$
    \overline{\sT}_{L+a } \wedge \sT = \overline{\sT}_{L+a} \wedge \overline{\sT}_{\cC}.
    $$
  Then $\sT = {\sT_{\cC}}$ in $T_N.$
\end{lemma}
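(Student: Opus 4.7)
The plan is to use Fourier analysis with respect to the natural $(S^1)^d$-action on $T_N$, combined with the local structure of positive closed currents supported on $\Log^{-1}(|\cC|)$.

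First, let $\sT^{(0)} := \int_{t\in(S^1)^d}(T_t)^*\sT\,dt$ be the $(S^1)^d$-average of $\sT$. It is a closed, positive, $(S^1)^d$-invariant current of bi-dimension $(p,p)$ with support in $\Log^{-1}(|\cC|)$, so by the last proposition of Section~4 it equals $\sT_{\cC'}$ for some balanced polyhedral complex $\cC'$ with $|\cC'|\subseteq|\cC|$. Because both $\overline{\sT}_{L+a}$ and $\overline{\sT}_\cC$ are $(S^1)^d$-invariant, averaging the lemma's hypothesis yields $\overline{\sT}_{L+a}\wedge\sT_{\cC'}=\overline{\sT}_{L+a}\wedge\sT_\cC$ for every $a\in\R^d$. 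By Theorem~\ref{thm:main-int} this reads $\sT_{(L+a)\cdot\cC'}=\sT_{(L+a)\cdot\cC}$, an equality of zero-dimensional tropical currents. Since $L\pitchfork\aff(\sigma)$ for every cell $\sigma$, varying $a$ causes $L+a$ to meet the relative interior of every top-dimensional cell of $\cC'$ and $\cC$ transversely, and the multiplicities at these intersection points pin down the weights cell by cell. Hence $\cC'=\cC$ and $\sT^{(0)}=\sT_\cC$.

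It remains to show $\sT=\sT^{(0)}$. Consider the formal isotypic decomposition $\sT=\sum_{\nu\in\Z^d}\sT^{(\nu)}$ for the $(S^1)^d$-action, where $\sT^{(\nu)}:=\int_{t\in(S^1)^d}e^{-2\pi i\langle\nu,t\rangle}(T_t)^*\sT\,dt$. The invariance of $\overline{\sT}_{L+a}$ gives $\overline{\sT}_{L+a}\wedge\sT^{(\nu)}=(\overline{\sT}_{L+a}\wedge\sT)^{(\nu)}$, which vanishes whenever $\nu\neq 0$ because the right-hand side of the hypothesis is $(S^1)^d$-invariant. To deduce $\sT^{(\nu)}=0$ from this, I work locally near each top-dimensional cell $\sigma\in\cC$: by a structure theorem for positive closed currents on the holomorphically foliated real submanifold $\Log^{-1}(\sigma^\circ)$, $\sT$ is the integral $\int[y\cdot T_{\aff(\sigma)}]\,d\eta_\sigma(y)$ against a positive transversal measure $\eta_\sigma$ on the compact leaf space $S_{N/N_\sigma}$, and $\sT^{(\nu)}$ corresponds to the $\nu$-Fourier coefficient of $\eta_\sigma$. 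Expanding $\overline{\sT}_{L+a}=\int_x[\pi_{L+a}^{-1}(x)]\,d\mu(x)$ and applying the slicing computation of Proposition~\ref{prop:slice}, the identity $\overline{\sT}_{L+a}\wedge\sT^{(\nu)}=0$ for all $a$ becomes the vanishing of a Radon-type transform of $\eta_\sigma^{(\nu)}$, indexed by translates of $L$. The transversality $L\pitchfork\aff(\sigma)$ makes this Radon transform injective on Fourier modes, so $\eta_\sigma^{(\nu)}=0$ for $\nu\neq 0$; hence $\sT^{(\nu)}=0$ locally, and patching across cells gives $\sT=\sT^{(0)}=\sT_\cC$.

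The main obstacle is the last step: establishing the local structure theorem for positive closed currents on the real, CR-dimension-$p$ submanifolds $\Log^{-1}(\sigma^\circ)$ with enough precision, interpreting the wedge products $\overline{\sT}_{L+a}\wedge\sT^{(\nu)}$ distributionally through the continuous-superpotential property of $\overline{\sT}_{L+a}$ together with the positivity of $\sT$, and verifying that the transversality of $L$ to every $\aff(\sigma)$ translates into injectivity of the Radon transform acting on the Fourier modes of the transversal measures $\eta_\sigma$.
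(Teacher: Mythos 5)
Your proposal is essentially the paper's proof: both rest on Demailly's second theorem of support to write $\sT$ as a sum of fibre integrals $\int[\pi_\sigma^{-1}(y)]\,d\mu^{\sT}_\sigma(y)$ against unknown transversal measures, and then on Fourier analysis of the slices $\sT_{L+a}\wedge\sT$ to force $\mu^{\sT}_\sigma=w_\sigma\mu_\sigma$; your $(S^1)^d$-isotypic decomposition is just the Fourier expansion of those transversal measures, with the zero mode handled by a slight detour through the classification of invariant currents. The ``main obstacle'' you flag is resolved exactly as in the paper: the local structure theorem is Demailly's Theorem III.2.13 (the support is CR of dimension $p$ on each $\Log^{-1}(\sigma^{\circ})$), and the injectivity of your Radon transform on Fourier modes is automatic because $\mu_{L+a}$ is Haar on a subtorus complementary to the leaf space of $\sigma$, so testing $\sT_{L+a}\wedge\sT$ against the forms $e^{-i\langle\nu,\theta\rangle}\rho(r)$ recovers every Fourier coefficient $\widehat{\mu^{\sT}_\sigma}(\nu)$ as $a$ varies over translates meeting $\sigma^{\circ}$.
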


\begin{proof}
Let us first remark that $\text{rec}(L+a) = \text{rec}(L)$ for all $a \in \R^d$ and therefore, all $\sT_{a+ L}$ are compatible with $X_{\Sigma}$ and have a continuous superpotenrial in $X_{\Sigma}$ and as a result, all the above wedge products are well-defined. By Demailly's second theorem of support~\cite[III.2.13]{DemaillyBook1}, there are measures $\mu^{\sT}_{\sigma}$ such that 
$$
\sT = \sum_{\sigma}\int_ {x \in S(\Z^d \cap H_{\sigma})} \mathbbm{1}_{\Log^{-1}(\sigma^{\circ})}\big[\pi_{\sigma}^{-1}(x)\big] d\mu^{\sT}_{\sigma}(x).
$$
By repeated application of  Proposition \ref{prop:slice}, 
$$
\sT_{L} \wedge \sT =  \sum_{\sigma}\int_{(x,y) \in S(\Z^d \cap H_{L})\times  S(\Z^d \cap H_{\sigma}) } \big[\pi_{H}^{-1}(x)\cap \pi_{\sigma}^{-1}(y) \big]\,d \mu_{L}(x) \otimes \mu^{\sT}_{\sigma}(y).
$$
Applying both sides of the equality $\sT_{L} \wedge \sT = \sT_{L} \wedge \sT_{\cC}$ on test-functions of the form
\begin{equation*}\label{eq:fourier-forms}
\omega_{\nu}= \exp(-i\langle \nu , \theta \rangle )\rho(r) 
\end{equation*}
where $\rho: \R^d \to \R$ is a smooth function with compact support of $r \in \R^d$ and $\theta \in [0,2\pi)^d,$ and $\nu \in \Z^d,$ completely determines the Fourier coefficients of $\mu^{\sT}_{\sigma}$ which have to coincide with the normalised Haar measures multiplied by the weight of $\sigma$, \textit{i.e.}, $\mu_{\sigma}^{\sT}= w_{\sigma} \mu_{\sigma}.$
\end{proof}

Note that any subtorus of $(\C^*)^d,$ can be understood as a fibre of a tropical current. We have the following slicing theorem. 
\begin{proposition}\label{prop:slicing}
    Let $\cC \subseteq \R^d$ be a tropical variety and $A\subseteq \R^d$ be a rational hyperplane intersecting $\cC$ transversely. Let $\Sigma$ be a fan compatible with $\cC+ A.$ Assume that ${\overline{\sS}_n}$ is a sequence of positive closed currents on $X_{\Sigma},$ and denote by $\sS_n$ the restriction to $T_N.$ Further, assume that 
\begin{itemize}
\item [(a)] $\overline{\sS}_n \lto \overline{\sT}_{\cC};$
\item [(b)] $\supp{\overline{\sS}_n}  \lto \supp{\overline{\sT}_{\cC}},$
\end{itemize}
then
$$
\lim_{n\to \infty} \big({\sS_n} \wedge [T^A] \big) = \sT_{\cC} \wedge [T^A],
$$
as currents on $T_N \subseteq X_{\Sigma}.$
\end{proposition}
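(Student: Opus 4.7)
The plan is to combine the continuous superpotential machinery with the semi-continuity of slices, following in spirit the proof of Theorem \ref{thm:slicing}.

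First, since $A$ is a rational hyperplane, Theorem \ref{thm:hyper-ddc} gives $\sT_A = dd^c(\fq_A \circ \Log)$ for a tropical polynomial $\fq_A$ whose associated potential is continuous on $T_N$; equivalently, by compatibility of $\Sigma$ with $A$ together with Theorem \ref{thm:trop-curr-CSP}, the extension $\overline{\sT}_A$ has a continuous superpotential on $X_{\Sigma}$. Combined with Proposition \ref{iprop:slicing-CSP} (or directly with Bedford--Taylor theory on $T_N$), this yields
$$
\sS_n \wedge \sT_A \;\lto\; \sT_{\cC} \wedge \sT_A \quad \text{in } T_N.
$$

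Next, $\sT_A$ decomposes as $\sT_A = \int_{t \in S^1}[\pi_A^{-1}(t)]\, d\mu(t)$, where $\pi_A: T_N \to T_N/T_{H_A \cap \Z^d} \simeq \C^*$ is the quotient projection, $S^1$ is the unit circle in $\C^*$, and $T^A = \pi_A^{-1}(1)$. The previous step therefore expresses the convergence of averages
$$
\int_{S^1} \sS_n \wedge [\pi_A^{-1}(t)]\, d\mu(t) \;\lto\; \int_{S^1} \sT_{\cC} \wedge [\pi_A^{-1}(t)]\, d\mu(t),
$$
and I must upgrade this to fiberwise convergence at $t = 1$. To this end, I extract a cluster value $\sT^\ast$ of $\sS_n \wedge [T^A]$. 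By Lemma \ref{lem:slice-supp} combined with the Hausdorff convergence $\supp{\sS_n} \to \supp{\sT_{\cC}}$, one has $\supp{\sT^\ast} \subseteq \Log^{-1}(\cC \cap A) \cap T^A$, so the support is already contained in the expected limit locus.

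To conclude that $\sT^\ast = \sT_{\cC} \wedge [T^A]$, I would apply Theorem \ref{thm:Berteloot-Dinh} in local product charts of the form $V \times U \subset T^A_{\rm lin} \times \C^*$ around $T^A$: both $\sS_n$ and $\sT_{\cC}$ restrict to horizontal-like currents with respect to $\pi_A$ there, and the semi-continuity theorem yields $\langle \sT^\ast, \phi\rangle \leq \langle \sT_{\cC} \wedge [T^A], \phi\rangle$ for every plurisubharmonic test function $\phi$. For pluriharmonic $\phi$ --- in particular the real and imaginary parts of Fourier characters on the ambient real torus $(S^1)^d$ --- the inequality becomes an equality, so $\sT^\ast$ and $\sT_{\cC} \wedge [T^A]$ have the same Fourier coefficients on the real torus; exactly as in the proof of Theorem \ref{thm:slicing}, this identifies the two currents.

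The main obstacle is that Theorem \ref{thm:Berteloot-Dinh} is formulated for slices that are \emph{measures}, whereas $\sT^\ast$ and $\sT_{\cC} \wedge [T^A]$ are $(p-1,p-1)$-currents of higher bidegree. Bridging this gap will require either reducing to the measure case by further slicing along complementary rational subtori (in the spirit of the proof of Lemma \ref{lem:radon}) or testing against a suitable family of $(d-p,d-p)$-forms concentrated near $\supp{\sT^\ast}$. Lemma \ref{lem:radon} itself will then be convenient to pin down the weights on $\cC \cap A$ and wrap up the identification.
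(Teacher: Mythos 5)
Your toolbox and overall strategy are the paper's: identify the limit of the slices via the Berteloot--Dinh semi-continuity together with the pluriharmonic/Fourier trick, and then recover the full current from enough slices via Lemma \ref{lem:radon}. But the proposal stalls exactly at the point you flag, and that point is the substance of the proof. Theorem \ref{thm:Berteloot-Dinh}, and the Fourier argument built on it in Theorem \ref{thm:slicing}, apply only when the slices are \emph{measures}, i.e.\ when the bidimension of the current being sliced equals the dimension of the base of the projection. Slicing $\sS_n$ (bidimension $(p,p)$) directly along $\pi_A$, whose base $T_{\Z^d/(\Z^d\cap A)}\simeq \C^*$ is one-dimensional, produces $(p-1,p-1)$-currents, so your step of ``applying Theorem \ref{thm:Berteloot-Dinh} in product charts with respect to $\pi_A$'' does not go through for $p>1$, and the Fourier identification of the cluster value is not available at that level. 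You name two possible fixes but execute neither, so as written the argument is incomplete.

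The paper closes this gap by reversing the order of operations. Choose a rational affine plane $L$ of dimension $d-p+1$ meeting every $\aff(\sigma)$, $\sigma\in\cC\cap A$, transversely, so that $\sS_n\wedge \sT_{a+L}$ has bidimension $(1,1)$. Since each $\overline{\sT}_{a+L}$ has a continuous superpotential on a suitable toric compactification, Proposition \ref{prop:slicing-CSP} gives $\sS_n\wedge\sT_{a+L}\to\sT_{\cC}\wedge\sT_{a+L}$; after the splitting $\xi:(\C^*)^d\simeq T_{\Z^d/(\Z^d\cap A)}\times T_{\Z^d\cap A}$ these currents are horizontal-like over $\C^*$ --- and here hypothesis (b) on the supports is what makes $\sS_n\wedge\sT_{a+L}$ horizontal-like for large $n$, not merely a tool to locate the support of a cluster value. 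Theorem \ref{thm:slicing} then yields $(\sS_n\wedge[T^A])\wedge\sT_{a+L}\to(\sT_{\cC}\wedge[T^A])\wedge\sT_{a+L}$ for every $a$, and Lemma \ref{lem:radon} reconstructs $\lim(\sS_n\wedge[T^A])=\sT_{\cC}\wedge[T^A]$ from this family. Note also that your opening step, $\sS_n\wedge\sT_A\to\sT_{\cC}\wedge\sT_A$, while true, is an average over the $S^1$-family of fibres of $\pi_A$ and plays no role; the family actually needed for the Radon-type lemma is $\{\sT_{a+L}\}_{a}$. In short: right diagnosis, right ingredients, but the decisive reduction to the measure case --- wedging with $\sT_{a+L}$ \emph{before} slicing with $[T^A]$ --- is the missing step.
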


\begin{proof}
Assume that $L\subseteq \R^d$ is an $(d-p-1)$-dimensional affine plane intersecting all $\text{aff}(\sigma)$ for all $\sigma \in \cC \cap A$ transversely. Then, on a projective smooth toric variety $X_{\Sigma'}$ compatible with $\cC+L+ A$ the tropical currents $\overline{\sT}_{a + L},$ $a\in \R^d$ have continuous superpotentials. Therefore, by Proposition \ref{prop:slicing-CSP}, we have
$$
\lim_{n\to \infty} \big(\overline{\sS}_n \wedge \overline{\sT}_{a+L} \big) =  \overline{ \sT}_{\cC}  \wedge \overline{\sT}_{a+L}.
$$
Now, for any $x \in \cC \cap L \cap A,$ let $B\subseteq \R^d$ containing $x$ be a bounded open set containing only $x$ as an isolated point of the intersection. By a translation we can assume that $x=0.$ Consider the isomorphism
$$\xi: (\C^*)^d \xrightarrow{~\sim~} T_{\Z^d /(\Z^d \cap A)} \times T_{\Z^d \cap A},$$ 
and let $\pi_1$ and $\pi_2$ be the respective projections. Note that $\pi_1^{-1}(1) = T^A.$ We now set
\begin{align*}
    U &:= \pi_1\circ\xi\big( \Log^{-1}(U) \cap \supp{\sT_{\cC} \wedge \sT_{a+L}} \big),\\
    V &:=  \pi_2 \circ \xi\big( \Log^{-1}(U) \cap  T^A \big), \\
    \sT_n &:= \xi_* \big(\sS_n \wedge \sT_{a+L} \big), \text{ in $T_N$}, \\
    \sT &:= \xi_* \big( \sT_{\cC}  \wedge \sT_{a+L}\big).
\end{align*}
Note that $\sT_{\cC}$ are horizontal-like as in the setting of Theorem \ref{thm:slicing}.  Assumption (b) now implies that  $\sT_n$ for a large $n,$ is also horizontal-like. Thus we obtain
$$ 
\lim_{n\to \infty} \big(\sS_n  \wedge [T^A] \big)\wedge \sT_{a+L}= \sT_{\cC} \wedge [T^A] \wedge \sT_{a+L},
$$
for every $a.$ We now deduce the convergence on $T_N$ by Lemma \ref{lem:radon}. 
\end{proof}

\begin{theorem}\label{thm:limit-closure}
Let $\mathcal{C} \subseteq \mathbb{R}^d$ be a tropical variety, and let $B = A_1 \cap \dots \cap A_k \subseteq \mathbb{R}^d$ be a complete intersection of rational hyperplanes $A_1, \dots, A_k$. Assume that $\mathcal{C}$ and $B$ intersect transversely. Let $\Sigma$ be a smooth, projective fan compatible with $\mathcal{C} + B$, and let $(\overline{\mathscr{S}}_n)$ be a sequence of positive closed currents on $X_{\Sigma}$. Suppose that:
\begin{itemize}
    \item[(a)] $\overline{\mathscr{S}}_n \lto \overline{\mathscr{T}}_{\mathcal{C}}$;
    \item[(b)] $\supp{\overline{\mathscr{S}}_n} \lto \supp{\overline{\mathscr{T}}_{\mathcal{C}}}$.
\end{itemize}

Then, the following limit holds in the smooth projective toric variety $X_{\Sigma}$:
\[
\lim_{n \to \infty} \left(\overline{\mathscr{S}}_n \wedge [\overline{T}^{B}]\right) = \overline{\mathscr{T}}_{\mathcal{C}} \wedge [\overline{T}^B].
\]
\end{theorem}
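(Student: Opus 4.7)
The plan is to induct on $k$, the number of hyperplanes cutting out $B$, with Proposition~\ref{prop:slicing} serving as both the base case and the driving tool at each inductive step.

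For the base case $k=1$, Proposition~\ref{prop:slicing} already provides the convergence on the open torus $T_N$. To upgrade this to convergence on $X_{\Sigma}$, I would observe that the sequence $\overline{\sS}_n \wedge [\overline{T}^{A_1}]$ has uniformly bounded mass (from the mass bound on $\overline{\sS}_n$ and the fixed cohomology class of $[\overline{T}^{A_1}]$), so after extracting a subsequence it converges to some cluster current $\overline{\sR}$ on $X_{\Sigma}$. The restriction of $\overline{\sR}$ to $T_N$ coincides with $\sT_{\cC}\wedge[T^{A_1}]$ by Proposition~\ref{prop:slicing}, so the difference $\overline{\sR} - \overline{\sT_{\cC}\wedge[T^{A_1}]}$ is carried by the boundary divisors $X_{\Sigma}\setminus T_N$. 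The compatibility of $\Sigma$ with $\cC + B$, together with the Hausdorff convergence in (b), rules out mass accumulation on these divisors, forcing the difference to vanish.

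For the inductive step, set $B' := A_1 \cap \cdots \cap A_{k-1}$ and apply the inductive hypothesis to obtain
\[
\overline{\sS}_n \wedge [\overline{T}^{B'}] \lto \overline{\sT}_{\cC}\wedge[\overline{T}^{B'}].
\]
Iterated slicing via Proposition~\ref{prop:slice} and its corollary identifies the right-hand side with the extension by zero of a tropical current on the subtorus $T^{B'}$, supported in $\Log^{-1}(\cC \cap B')$. The transversality of $\cC$ with $B$ implies transversality of $\cC \cap B'$ with $A_k$, so the hypotheses of Proposition~\ref{prop:slicing} are met by the sliced sequence, with the Hausdorff support convergence supplied by Lemma~\ref{lem:slice-supp} together with (b). A final application of Proposition~\ref{prop:slicing} slices by $[\overline{T}^{A_k}]$, and transversality of the $A_i$'s together with Proposition~\ref{prop:int-numb-tori} identifies $[\overline{T}^{B'}] \wedge [\overline{T}^{A_k}]$ with $[\overline{T}^B]$, completing the induction.

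The main obstacle is establishing the genuine Hausdorff convergence of supports at each intermediate slicing step, since Lemma~\ref{lem:slice-supp} only yields a containment. One must rule out loss of mass near generic points of the transverse intersection, which follows from the local structure of the slicing operation under the transversality hypothesis but is the technically delicate part. An alternative route avoids the induction altogether: I could mimic the Fourier-analytic proof of Proposition~\ref{prop:slicing} directly for the full $B$, by fixing a generic rational plane $L$ of complementary dimension $p - (d-k)$, verifying the convergence
\[
(\overline{\sS}_n \wedge [\overline{T}^B]) \wedge \sT_{a+L} \lto (\overline{\sT}_{\cC}\wedge[\overline{T}^B])\wedge \sT_{a+L}
\]
for every $a$ via Proposition~\ref{prop:slicing-CSP} and Theorem~\ref{thm:slicing}, and then reconstructing the full limit through Lemma~\ref{lem:radon}. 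This is essentially a $k$-fold generalisation of the argument already used in the $k=1$ case and sidesteps the subtleties of iterated Hausdorff convergence.
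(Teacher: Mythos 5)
Your proposal follows essentially the same route as the paper: iterated application of Proposition~\ref{prop:slicing} to obtain the convergence on the open torus $T_N$, followed by a boundary argument to upgrade the convergence to $X_{\Sigma}$. Two remarks. First, in your closure step the correct mechanism is not that ``mass cannot accumulate on the boundary divisors'' per se: any cluster value $\overline{\sR}$ of $\overline{\sS}_n\wedge[\overline{T}^{B}]$ dominates the extension by zero $\overline{\sT_{\cC}\wedge[T^{B}]}$, and the (positive, closed) difference is supported on $\supp{\overline{\sT}_{\cC}}\cap\overline{T}^{B}\cap\bigcup_{\rho}D_{\rho}$, which by compatibility (Theorem~\ref{thm:transverse-fiber}) has Cauchy--Riemann dimension $p-k-1$, strictly less than the bidimension $p-k$ of the slice; Demailly's first theorem of support then forces the difference to vanish. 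This is precisely Lemma~\ref{lem:closure} in the paper, and you should invoke it (or the support theorem) explicitly rather than appeal to ``no mass accumulation''. Second, the difficulty you flag --- that Lemma~\ref{lem:slice-supp} only yields a containment, so hypothesis~(b) of Proposition~\ref{prop:slicing} is not automatically inherited by the sliced sequence at intermediate steps --- is genuine; the paper's ``repeated application of Proposition~\ref{prop:slicing}'' is equally terse on this point. Your proposed workaround, namely running the Fourier-analytic argument of Proposition~\ref{prop:slicing} directly against the currents $\sT_{a+L}$ for a rational plane $L$ complementary to $\cC\cap B$ and reconstructing the limit via Lemma~\ref{lem:radon}, is a sound way to sidestep the iteration and is in the spirit of how the paper itself proves the $k=1$ case.
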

We need a simple observation:
\begin{lemma}\label{lem:closure}
 Let $U \subseteq \C^d$ be an open subset and $D$ be an analytic subset of $\C^d.$  Assume that we have the convergence of closed positive currents $\sV_n \lto \sV$ in $U\setminus D,$ and $\sV_n$'s and $\sV$ have a uniformly bounded local masses near $D.$ Further, assume that for any cluster value $\sW$ of the sequence $\{\overline{\sV}_n\}_n,$  we have
 \begin{itemize}
     \item [(a)] $\supp{\sW}\subseteq \supp{\overline{\sV}},$
     \item [(b)] $\supp{\overline{\sV}} \cap D$ has the expected Cauchy--Riemann dimension.
 \end{itemize} 
 Then, 
 $$\overline{\sV}_n \lto \overline{\sV}.$$
\end{lemma}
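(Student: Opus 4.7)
The strategy is the standard compactness-plus-uniqueness argument: the uniform local mass bound produces weak cluster values of $\{\overline{\sV}_n\}_n$, and hypotheses (a) and (b) are designed precisely to force any such cluster value to coincide with $\overline{\sV}$. Denote by $p$ the common bidimension of the currents $\overline{\sV}_n$ and $\overline{\sV}$. By Banach--Alaoglu, the uniform mass bound near $D$, together with the convergence on $U\setminus D$ (which controls mass away from $D$), makes $\{\overline{\sV}_n\}_n$ relatively compact in the weak topology of positive currents on $U$. Consequently, it suffices to show that every weak cluster value $\sW = \lim_k \overline{\sV}_{n_k}$ satisfies $\sW = \overline{\sV}$.

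Next, restrict to $U\setminus D$. Since $\sV_n \to \sV$ there by assumption, we have $\sW|_{U\setminus D} = \sV|_{U\setminus D} = \overline{\sV}|_{U\setminus D}$, so the closed current
\[
\sR := \sW - \overline{\sV}
\]
is supported on $D$. Combined with hypothesis (a), this yields
\[
\supp \sR \subseteq \bigl(\supp \sW \cup \supp \overline{\sV}\bigr) \cap D \subseteq \supp \overline{\sV} \cap D.
\]
Note that $\sR$ is a difference of two positive closed $(p,p)$-currents, hence it is closed, real, and of order zero (its coefficients are signed measures).

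The final step is to invoke Demailly's first theorem of support (see \cite{DemaillyBook1}, III.2.13): a closed current of order zero, of bidimension $(p,p)$, whose support has Cauchy--Riemann dimension strictly less than $p$, must vanish. Hypothesis (b) ensures that $\supp \overline{\sV}\cap D$ meets this dimension requirement, and therefore $\sR = 0$, i.e.\ $\sW = \overline{\sV}$. The main delicate point, and the only one that really uses the geometric hypotheses rather than abstract functional analysis, is confirming that the phrase \emph{expected Cauchy--Riemann dimension} in (b) translates to a \emph{strict} drop below $p$ so that the support theorem actually applies. In the intended application this is guaranteed because $\supp\overline{\sV}$ is foliated by complex tori of the correct dimension and $D$ is an analytic divisor transverse (in the relevant sense) to that foliation, forcing the intersection to have complex tangent spaces of dimension at most $p-1$.
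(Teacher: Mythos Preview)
Your proof is correct and follows essentially the same route as the paper's: reduce to an arbitrary cluster value $\sW$, observe that $\sR=\sW-\overline{\sV}$ is supported in $\supp\overline{\sV}\cap D$, and kill $\sR$ with Demailly's first theorem of support. The paper's write-up is terser and actually asserts that $\sW-\overline{\sV}$ is \emph{positive} (not merely normal); this is true because $\overline{\sV}$, being a trivial extension, charges no mass on $D$, so $\sW-\overline{\sV}=\mathbbm{1}_D\,\sW\ge 0$. Your weaker observation that $\sR$ is normal already suffices for the support theorem, so nothing is lost.
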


\begin{proof}
$\sW -\overline{\sV}$ is a positive closed current with the Cauchy--Riemann dimension less than or equal to $p,$ therefore, it must be zero by Demailly's first theorem of support  \cite{DemaillyBook1}*{Theorem III.2.10}.
\end{proof}

\begin{proof}[Proof of Theorem \ref{thm:limit-closure}]
 Applying Theorem \ref{thm:transverse-fiber} (or \cite{Osserman-Payne}*{Proposition 3.3.2} to each fibre of $\overline{\sT}_{\cC}$ separately), we obtain 
 $\supp{\overline{\sT}_{\cC}} \cap \overline{T}^B \cap [D_\rho]$ has the expected Cauchy--Riemann dimension $p-k-1.$ By Demailly's first theorem of support  \cite{DemaillyBook1}*{Theorem III.2.10},
 $$\overline{\sS}_{\cC} \wedge [\overline{T}^B] = \overline{\sT_{\cC} \wedge [T^B]}. $$ 
 By assumption $\overline{\sS}_n  \lto \overline{\sT}_{\cC}$ and $\supp{\sT_n}  \lto \supp{\overline{\sT}_{\cC}}.$ The observation in Lemma \ref{lem:slice-supp}, 
$$
 \lim_{n \to \infty} \supp{\overline{\sS}_n \wedge [\overline{T}^B]} \subseteq \supp{\overline{\sT}_{\cC} \wedge [\overline{T}^B]}.
$$
Therefore, any cluster value of $\overline{\sS_n \wedge [T^B]} \leq \overline{\sS}_n \wedge [\overline{T}^B] $ has a support in $\supp{\overline{\sT}_{\cC} \wedge [\overline{T}^B]}.$ Now, we set
\begin{itemize}
\item [(a)] $\sV_n := \sS_n \wedge [T^B],$
\item [(b)] $\sV := \sT_{\cC} \wedge [T^B],$
\item [(c)]  $\sW$ a cluster value of $\overline{\sT_n \wedge [{T}^B]}.$
\end{itemize}
A repeated application of Proposition \ref{prop:slicing} for $B = A_1 \cap \dots \cap A_k,$ we are in the situation of Lemma \ref{lem:closure}, and  conclude. 
\end{proof}
 \begin{lemma}\label{lem:diag-int}
   Let $X_{\Sigma}$ be a smooth projective toric variety, and $\bar{\Delta} \subseteq X_{\Sigma}\times X_{\Sigma}$ be the diagonal. Let $\sS$ and $\sT$ be two positive currents on $X.$ Then, for any ray $\rho \in \Sigma,$
$$
\supp{\sS} \cap \supp{\sT} \cap D_{\rho} \subseteq X_{\Sigma}
$$
has a Cauchy--Riemann dimension $\ell$, if and only if,
$$
\supp{\sS \otimes \sT} \cap \bar{\Delta} \cap D_{(0,\rho)} \subseteq X_{\Sigma} \times X_{\Sigma},
$$
has a Cauchy--Riemann dimension  $\ell,$ where $D_{(0,\rho)}$ is the toric invariant divisor corresponding to the ray $(0, \rho)$ in $\Sigma \times \Sigma.$
 \end{lemma}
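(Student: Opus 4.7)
The plan is to make this into a direct set-theoretic identification, so that the Cauchy--Riemann dimensions of the two sets agree because one is the image of the other under a biholomorphism.

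First, I would unpack the three ingredients on the right-hand side. By definition of the tensor product of positive currents, $\supp{\sS \otimes \sT} = \supp{\sS} \times \supp{\sT}$. Next, I would identify $D_{(0,\rho)} \subseteq X_{\Sigma \times \Sigma} \simeq X_{\Sigma} \times X_{\Sigma}$: the cone $(0,\rho) \in \Sigma \times \Sigma$ has associated torus orbit $T_N \times T_{N/N_\rho}$, whose closure in the product toric variety is $X_{\Sigma} \times D_\rho$. Hence $D_{(0,\rho)} = X_{\Sigma} \times D_\rho$. Finally, $\bar{\Delta}$ is the image of the diagonal embedding $\iota : X_{\Sigma} \to X_{\Sigma} \times X_{\Sigma}$, $x \mapsto (x,x)$, which is a closed holomorphic embedding and hence a biholomorphism onto its image.

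Combining these three identifications, I would write
\[
\supp{\sS \otimes \sT} \cap \bar{\Delta} \cap D_{(0,\rho)}
= \bigl(\supp{\sS} \times \supp{\sT}\bigr) \cap \bar{\Delta} \cap \bigl(X_{\Sigma} \times D_\rho\bigr)
= \bigl\{(x,x) : x \in \supp{\sS} \cap \supp{\sT} \cap D_\rho \bigr\},
\]
and this last set is exactly $\iota\bigl(\supp{\sS} \cap \supp{\sT} \cap D_\rho\bigr)$.

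Since $\iota$ is a biholomorphism between $X_{\Sigma}$ and $\bar{\Delta}$, and the notion of Cauchy--Riemann dimension (the maximal complex-analytic dimension of a complex-analytic subset contained in a given set, or equivalently the maximal complex dimension of a tangent space along a smooth stratum) is preserved under biholomorphisms, both sets have the same Cauchy--Riemann dimension. This gives the equivalence in both directions. There is no real obstacle here; the only thing to be careful about is the correct identification of $D_{(0,\rho)}$ in the product fan, which is a routine verification using the orbit--cone correspondence.
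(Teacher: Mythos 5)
Your proposal is correct and follows essentially the same route as the paper: the paper's (very terse) proof likewise rests on identifying the fan of $X_{\Sigma}\times X_{\Sigma}$ with $\Sigma\times\Sigma$ and $D_{(0,\rho)}$ with $X_{\Sigma}\times D_{\rho}$, after which the set-theoretic identification via the diagonal embedding is immediate. You have simply spelled out the steps (the product formula for $\supp{\sS\otimes\sT}$ and the invariance of Cauchy--Riemann dimension under the biholomorphism onto $\bar{\Delta}$) that the paper leaves implicit.
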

\begin{proof}
    The fan of $X_{\Sigma} \times X_{\Sigma}$ is $\Sigma \times {\Sigma},$ we have that   $D_{(0, \rho)}\simeq X_{\Sigma} \times D_{\rho}$ and the assertion follows. 
\end{proof}
\begin{theorem}\label{thm:int-general}
    Let $\cC_1, \cC_2\subseteq \R^d$ be two tropical cycles intersecting properly. Assume that $X_{\Sigma}$ is a smooth toric projective variety compatible with $\cC_1 +\cC_2.$ If moreover, for two sequence of positive closed currents $\overline{\sV}_n$ and $\overline{\sW}_n$ we have 
\begin{itemize}
    \item [(a)] $\overline{\sW}_n \lto \overline{\sT}_{\cC_1},$ and $\overline{\sV}_n \lto \overline{\sT}_{\cC_2};$
   \item [(b)] $\supp{\overline{\sW}_n} \lto \supp{\overline{\sT}_{\cC_1}},$ and  $\supp{\overline{\sV}_n} \lto \supp{\overline{\sT}_{\cC_2}};$ 
 \item [(c)] For any large $n,$ $\overline{\sW}_n \wedge  \overline{\sV}_n$  is well-defined,
\end{itemize}
 then 
$$
\overline{\sW}_n \wedge \overline{\sV}_n \lto \overline{\sT}_{\cC_1} \wedge \overline{\sT}_{\cC_2}, \quad \text{as $m\to \infty.$ }
$$
 
\end{theorem}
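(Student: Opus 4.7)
The plan is to reduce the wedge product to a slicing by the diagonal in the product, so that Theorem~\ref{thm:limit-closure} can be applied. The diagonal $\Delta \subseteq (\C^*)^d \times (\C^*)^d$ is the subtorus $T^B$ associated to the rational affine subspace $B = \{(x,y) \in \R^{2d}: x=y\}$, which is a complete intersection of the $d$ rational hyperplanes $A_i = \{x_i = y_i\}$. On the tropical side, tensoring corresponds to taking a product of cycles: $\sT_{\cC_1} \otimes \sT_{\cC_2} = \sT_{\cC_1 \times \cC_2}$ in $(\C^*)^{2d}$, and the proper intersection of $\cC_1$ with $\cC_2$ in $\R^d$ is equivalent to the transverse intersection of $\cC_1 \times \cC_2$ with $B$ in $\R^{2d}$. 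As explained in the Remark following Proposition~\ref{prop:product-def-Cn}, one has
\[
\sT_{\cC_1} \wedge \sT_{\cC_2} = \sT_{\cC_1 \times \cC_2} \wedge [T^B], \qquad \sW_n \wedge \sV_n = (\sW_n \otimes \sV_n) \wedge [T^B],
\]
where the second identity uses hypothesis~(c).

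Next, I choose a smooth projective toric variety $Y_{\widetilde\Sigma}$ that refines $\Sigma \times \Sigma$ and is compatible with $(\cC_1 \times \cC_2) + B$; existence follows from the same argument used in Theorem~\ref{thm:trop-curr-CSP}. By Theorem~\ref{thm:tensor-conv}, the tensor products $\overline{\sW}_n \otimes \overline{\sV}_n$ converge to $\overline{\sT}_{\cC_1 \times \cC_2}$ on $Y_{\widetilde\Sigma}$; and since the support of a tensor product is the product of the supports, hypothesis~(b) upgrades to the required Hausdorff convergence of supports in $Y_{\widetilde\Sigma}$. Theorem~\ref{thm:limit-closure} applied with $\cC = \cC_1 \times \cC_2$ and $B$ the diagonal then yields
\[
\bigl(\overline{\sW}_n \otimes \overline{\sV}_n\bigr) \wedge [\overline{T^B}] \lto \overline{\sT}_{\cC_1 \times \cC_2} \wedge [\overline{T^B}] \quad \text{in } Y_{\widetilde\Sigma}.
\]

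To transfer this back to $X_\Sigma$, I use the natural identification of (the closure of) the diagonal with $X_\Sigma$, possibly after a birational modification outside the support of the limit current; any blow-up contribution is absorbed by Theorem~\ref{thm:CS-blowup}. Lemma~\ref{lem:diag-int} together with the compatibility of $\Sigma$ with $\cC_1 + \cC_2$ guarantees that the intersections of $\supp{\overline{\sT}_{\cC_1 \times \cC_2}}$ with each toric boundary divisor $D_{(0,\rho)}$, intersected with $\overline{T^B}$, have the expected Cauchy--Riemann dimension. Consequently, by Demailly's first theorem of support (as used in the proof of Theorem~\ref{thm:limit-closure}), the identifications $\sW_n \wedge \sV_n = (\sW_n \otimes \sV_n) \wedge [T^B]$ and $\sT_{\cC_1} \wedge \sT_{\cC_2} = \sT_{\cC_1 \times \cC_2} \wedge [T^B]$ extend across the toric boundary with no extra mass, giving the desired convergence $\overline{\sW}_n \wedge \overline{\sV}_n \lto \overline{\sT}_{\cC_1} \wedge \overline{\sT}_{\cC_2}$ on $X_\Sigma$.

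The main obstacle is this last bookkeeping step: the diagonal in $X_\Sigma \times X_\Sigma$ is not automatically well-placed with respect to the refinement $\widetilde\Sigma$, so care is required in selecting $\widetilde\Sigma$ and in comparing the closure of the diagonal inside $Y_{\widetilde\Sigma}$ with $X_\Sigma$ itself. Once the geometric matching of the two compactifications is established and the Cauchy--Riemann dimension conditions are verified via Lemma~\ref{lem:diag-int}, the statement follows by combining Theorems~\ref{thm:tensor-conv} and~\ref{thm:limit-closure} in the manner above.
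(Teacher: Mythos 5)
Your proposal is correct and follows essentially the same route as the paper: reduce the wedge product to slicing the tensor product $\overline{\sW}_n \otimes \overline{\sV}_n$ by the diagonal (a complete intersection of the tori $z_i = z_i'$), use Theorem~\ref{thm:tensor-conv} and the equivalence between proper intersection of $\cC_1,\cC_2$ and transversality of $(\cC_1\times\cC_2)\cap\Delta_{\R}$, then control the toric boundary via Lemma~\ref{lem:diag-int} and the Cauchy--Riemann dimension/support argument. The only cosmetic difference is that you invoke Theorem~\ref{thm:limit-closure} wholesale where the paper applies its ingredients (Proposition~\ref{prop:slicing} and Lemma~\ref{lem:closure}) directly.
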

\begin{proof}
For two closed currents $\sS$ and $\sT$ on $X_{\Sigma}$ we naturally identify 
$$\sS \wedge \sT = \pi_* \big(\sS \otimes \sT  \wedge  [\bar{\Delta}]\big),$$
where $\pi: X_{\Sigma} \times X_{\Sigma} \lto X_{\Sigma}$ is the projection onto the first factor. Let $\sW_n$ and $\sV_n$ be the restriction of $\overline{\sW}_n$ and $\overline{\sV}_n$ to $T_N \subseteq X_{\Sigma}.$ 
We can define the tropical current $\sT_{\cC} := \sT_{\cC_1} \otimes \sT_{\cC_2}.$ By Theorem \ref{thm:closed-balanced}, $ \cC_1 \times \cC_2 := \cC $ is a balanced tropical variety. Let us denote the diagonal $\Delta_{\R} \subseteq \R^d \times \R^d.$ It is not hard to see that the transversality of $\cC_1$ and $\cC_2$ is equivalent to the transversality of the intersection 
$$(\cC_1 \times \cC_2) \cap \Delta_{\R} \subseteq \R^d \times \R^d.$$ 
Now, fixing the coordinates $\big((z_1, \dots ,z_d ), (z'_1, \dots , z'_d)\big) \in (\C^*)^d \times (\C^*)^d,$ the diagonal of $ (\C^*)^d \times (\C^*)^d$ is given by the complete intersection of the tori $z_i = z'_i,$ $i=1, \dots , d.$  Therefore, by refining $\Sigma \times \Sigma$ to a fan compatible also with $\Delta_{\R},$ and setting $B= \Delta$ in Proposition \ref{prop:slicing}, we obtain
$$
\sT_n \wedge [\Delta] \lto \sT_{\cC} \wedge [\Delta], \quad {\emph{i.e.}, }~  \sW_n \wedge \sV_n \lto \sT_{\cC_1} \wedge \sT_{\cC_2},
$$
in the open torus $T_N \times T_N.$ Now, we can employ the compatibility of $\cC_1 + \cC_2$ and $X_{\Sigma}$ together with Lemma \ref{lem:diag-int}, imply that for any ray $\rho \in \Sigma,$
$$
\supp{\overline{\sT}_{\cC_1} \otimes \overline{\sT}_{\cC_2}} \cap  [\bar{\Delta}] \cap D_{\rho}
$$
has the expected Cauchy--Riemann dimension, and finally conclude by Lemma \ref{lem:closure}.
\end{proof}

\textcolor{red}{}

\section{Dynamical Tropicalisation and Intersections}
\subsection{Dynamical tropicalisation with a non-trivial valuation}
Recall that for a field $\mathbb{K},$  $\nu:\mathbb{K}\lto \R \cup \{\infty \},$ is called a valuation if it satisfies the following properties for every $a, b \in \mathbb{K}$: 
\begin{itemize}
    \item [(a)] $\nu(a) = \infty$ if and only if $a=0;$
    \item [(b)] $\nu(ab) = \nu(a) + \nu(b);$
    \item [(c)] $\nu(a+b) \geq \min \{\nu(a) , \nu(b) \}.$
\end{itemize}
A valuation is called \emph{trivial}, if the valuation of any non-zero element is 0. For an element $a \in \mathbb{K},$ we denote by $\bar{a}$ its image in the residue field. We are interested in the case where $\mathbb{K}= \C((t)),$ is the field of \emph{formal Laurent series} with the variable $t,$ with the usual valuation. That is, for $g(t) = \sum_{j\geq k} a_j t^j,$ with $a_k \neq 0,$ the valuation equals the minimal exponent $\nu(g) = k \in \mathbb{Z}.$  
\begin{definition}
 \begin{itemize}
     \item [(a)] Let $f= \sum_{\alpha\in A}c_{\alpha} z^{\alpha} \in \mathbb{K}[z^{\pm 1}]$, be a Laurent polynomial in $d$ variables, where $A\subseteq \mathbb{Z}^d$ is a finite subset.  The tropicalisation of $f$ with respect to $\nu,$ 
         \begin{align*}
            \trop_{\nu}(f)&: \R^d \lto \R, \\
              x &\mapsto \max \{-\nu(c_{\alpha}) + \langle x, \alpha \rangle \}.
       \end{align*}


\item [(b)] Let $I \subseteq \mathbb{K}[z^{\pm 1}]$ be an ideal. The tropical variety associated to $I$, as a set, is defined as 
$${\text{Trop}_{\nu}}(I) := \bigcap_{f \in I} \text{Trop}(\trop_{\nu}(f)),$$
where $\text{Trop}(\trop_{\nu}(f))$ is the set of points where $\trop_{\nu}(f)$ is not differentiable; see Remark \ref{rem:trop-hyper-def}. 

\item [(c)] For an algebraic subvariety of the torus $Z \subseteq {(\mathbb{K}^*)}^d,$ with the associated ideal $\mathbb{I}(Z)$, the tropicalisation of $Z$, as a set, is $\Trop_{\nu}(Z):= \Trop_{\nu}(\mathbb{I}(Z)).$ 
\item [(d)] In all the situations above, $\trop_0$ denotes the tropicalisation with respect to the trivial valuation.
 \end{itemize}    
\end{definition}
We need to relate a non-trivial valuation to the trivial valuation. Compare to \cite{BJSST}*{Lemma 1.1}.
\begin{lemma}\label{lem:int-1}
Consider the ideal $I \subseteq \C[t^{\pm 1}, z^{\pm 1}] \stackrel{\iota}{\hookrightarrow}
 \C((t))[z^{\pm 1}].$ Assume that $(u,x)$ are the coordinates in $ \R\times \R^d$.  Then, we have the following equality of sets
$$
\Trop_0(I)  \cap \{u= -1 \} = \Trop_{\nu}(\iota(I)),
$$
where $\nu$ is the the usual valuation in $\C((t)).$ In other words, the tropicalisation of $I$ as an ideal in $\C[t^{\pm 1},z^{\pm 1}]$ with respect to the trivial valuation intersected with $\{u= -1\}$ coincides with the tropicalisation of $I =\iota(I)$ with respect to the usual valuation in $\C((t)).$ 
\end{lemma}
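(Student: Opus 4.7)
The plan is to compare tropicalisations element by element first, then lift the identification from individual polynomials to the entire ideal.

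First, for a single $F = \sum_{\alpha,k} c_{\alpha,k}\, t^k z^\alpha \in I$ with $c_{\alpha,k}\in \C$, I would unfold both definitions. Since the valuation on $\C$ is trivial,
\[
\trop_0(F)(u,x) = \max_{(\alpha,k):\, c_{\alpha,k}\neq 0}\{k u + \langle \alpha,x\rangle\},
\]
whereas regrouping by $\alpha$, $\iota(F) = \sum_\alpha c_\alpha(t) z^\alpha$ with $c_\alpha(t) = \sum_k c_{\alpha,k} t^k$ and $\nu(c_\alpha) = \min\{k : c_{\alpha,k}\neq 0\}$, so
\[
\trop_\nu(\iota(F))(x) = \max_\alpha\bigl\{-\nu(c_\alpha) + \langle \alpha,x\rangle\bigr\} = \max_{(\alpha,k):\, c_{\alpha,k}\neq 0}\{-k + \langle \alpha,x\rangle\}.
\]
Setting $u = -1$ in the first, the two functions of $x$ coincide.

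Second, I would match the corner loci of these tropical polynomials. If two distinct pairs $(\alpha_1,k_1) \ne (\alpha_2,k_2)$ attain the maximum of $\trop_0(F)$ at $(-1,x)$, the case $\alpha_1 = \alpha_2$ forces $k_1 = k_2$, a contradiction, so $\alpha_1 \ne \alpha_2$. From $-k_i + \langle \alpha_i, x\rangle = \trop_\nu(\iota(F))(x) = -\nu(c_{\alpha_i}) + \langle \alpha_i, x\rangle$ one deduces $k_i = \nu(c_{\alpha_i})$, and both $\alpha_i$ realise the maximum in $\trop_\nu(\iota(F))$; the reverse direction is analogous. Thus, denoting by $V(\cdot)$ the set where a tropical polynomial fails to be differentiable, $V(\trop_0(F)) \cap \{u = -1\} = V(\trop_\nu(\iota(F)))$ as subsets of $\R^d$, and intersecting over $F \in I$ yields
\[
\Trop_0(I) \cap \{u = -1\} = \bigcap_{F \in I} V\bigl(\trop_\nu(\iota(F))\bigr).
\]

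It remains to upgrade the right-hand side to $\Trop_\nu(\iota(I))$. The inclusion $\supseteq$ is tautological since $\{\iota(F) : F \in I\} \subseteq \iota(I)$. For the reverse, fix $x$ in the intersection above and any $g \in \iota(I)$, expanding $g = \sum_i h_i \iota(F_i)$ with $F_i \in I$ and $h_i \in \C((t))[z^{\pm 1}]$. I would clear $t$-denominators by multiplying through by a sufficiently large $t^N$ so that each $t^N h_i$ lies in $\C[[t]][z^{\pm 1}]$, then truncate modulo $t^M$ to obtain $\tilde h_i \in \C[t, z^{\pm 1}]$; the polynomial $\tilde g := \sum_i \tilde h_i F_i$ then lies in $I$, and the error $\iota(\tilde g) - t^N g$ has $t$-adic valuation at least $M - K$ for some $K$ depending only on the $F_i$. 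For $M$ large enough, the error's contribution to $\trop_\nu(\,\cdot\,)(x)$ is dominated, coefficient by coefficient, by the finitely many leading terms of $t^N g$, so $\trop_\nu(\iota(\tilde g))(x) = \trop_\nu(t^N g)(x)$ with the same maximising monomials. Since $\tilde g \in I$, the hypothesis gives $x \in V(\trop_\nu(\iota(\tilde g)))$; non-smoothness at $x$ therefore transfers and, subtracting the additive constant $N$, yields $x \in V(\trop_\nu(g))$. The main obstacle is exactly this last step: a priori $\Trop_\nu(\iota(I))$ could impose constraints arising from genuinely new $\C((t))$-linear combinations, and the truncation device is precisely what shows that such constraints are already enforced by elements of $I$ with polynomial $t$-coefficients.
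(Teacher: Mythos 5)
Your proof is correct, and its first half is the same computation as the paper's: restricting to $u=-1$ turns each exponent $k$ of $t$ into the constant $-k$, and the maximum over $k$ for fixed $\alpha$ picks out $-\min\{k\} = -\nu(c_\alpha)$, so $\trop_0(F)(-1,x)=\trop_\nu(\iota(F))(x)$. You go further than the paper in two places, both to its benefit. First, you check that the \emph{corner loci} match, not merely the function values --- in particular you rule out the degenerate case $\alpha_1=\alpha_2$, $k_1\neq k_2$, which is exactly why equality of the restricted functions upgrades to $V(\trop_0(F))\cap\{u=-1\}=V(\trop_\nu(\iota(F)))$; the paper dismisses this as ``clear.'' Second, and more substantially, the paper's displayed chain simply rewrites $\bigcap_{f\in\iota(I)}$ as $\bigcap_{f\in I}$, which is only immediate if $\iota(I)$ is read as the set-theoretic image; if, as the definition of $\Trop_\nu$ of an ideal suggests, one must intersect over the full extended ideal $\iota(I)\cdot\C((t))[z^{\pm 1}]$, then new $\C((t))[z^{\pm1}]$-combinations could a priori cut the set down further. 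Your truncation argument (clear denominators by $t^N$, truncate the $h_i$ modulo $t^M$, and observe that for $M$ large the low-valuation coefficients --- hence the maximising monomials at the fixed point $x$ --- of $\iota(\tilde g)$ and $t^Ng$ coincide, while the spurious support created by truncation sits at valuation $\geq M-K$ and cannot reach the max) closes exactly this gap. This is the content the paper implicitly outsources to the literature (it says ``compare to BJSST, Lemma 1.1''), so your version is self-contained where the paper's is not. The only cosmetic caveat is to note explicitly that the candidate exponents $\alpha$ range over a finite set independent of $M$ (contained in the Minkowski sums of the $z$-supports of the $h_i$ and $F_i$), so that ``$M$ large enough depending on $x$'' makes sense; you use this but do not state it.
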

The proof of the lemma becomes clear with the following example.
\begin{example}
Let $$f(x,t) = 4(t^3+ t^{-1})z_1 z_2+ (1+ t + t^2) z_1.$$
Then, the tropicalisation of $f\in \C[t^{\pm 1}, z^{\pm 1}],$ with respect to the trivial valuation equals:
$$
\trop_0(f)= \max\big\{ \max \{3u+x_1+ x_2, -u + x_1+ x_2\}, \max \{x_1, u+x_1 , 2u+x_1 \} \big\}.
$$
Letting $u:=-1,$ $\trop_0(f)(-1,x)= \max\{ 1 + x_1+ x_2, x_1\}.$
The latter equals $\trop_{\nu}(f)$ as an element of $\C((t))[z^{\pm 1}].$
\end{example}

\begin{proof}[Proof of Lemma \ref{lem:int-1}]
   If $f$ is a monomial in $\C[t][z],$ then it is clear that
   $$
\trop_0(f)(-1,x) =  \trop_{\nu}(\iota(f)).
   $$
    Therefore, we have the equality for any polynomial in $f \in \C[t,z].$ To prove the main statement, note that 
\begin{align*}
     \Trop_{\nu}(\iota(I)) & = \bigcap_{f\in \iota(I)} \Trop(\trop_{\nu}(f))\\ & = \bigcap_{f\in I}\big(\Trop(\trop_{0}(f)) \cap \{ u= -1\}\big) \\ &= \Trop_0(I)  \cap \{u= -1 \}.  
\end{align*}

\end{proof}
\begin{remark}\label{rem:Bergman}
    Bergman in \cite{Bergman}, shows that for an algebraic subvariety $Z\subseteq (\C^*)^d,$ one has 
$$
\lim_{t \to \infty} \Log_{t}   (Z) \subseteq \Trop_{0}(\mathbb{I}(Z)),
$$
and he conjectured the equality. This conjecture was later proved by Bieri and Groves in \cite{Bieri-Groves}. More precisely, Bieri and Grove prove that $\lim \Log_{t}   (Z) \cap (S^1)^d$ is a polyhedral sphere of real  dimension equal to (the complex dimension) $\dim(Z) -1.$ Therefore, the fan $\lim \Log_{t}   (Z)$ is a cone over their spherical complex. See also \cite{Maclagan-Sturmfels}*{Theorem 1.4.2}.
\end{remark}
 \begin{remark}
     The above lemma is related to the results of Markwig and Ren in \cite{Markwig--Ren}. They considered the tropicalisation of an ideal $J \subseteq R[[t]][z],$ where $R$ is the ring of integers of a discrete valuation ring $\mathbb{K}$, which is non-trivially valued. To obtain finiteness properties, however, the authors consider the associated tropical variety in the half-space $\R_{\leq 0} \times \R^d.$ Note that such a variety is almost never balanced. The authors also prove that for an ideal $I \subseteq \mathbb{K}[z],$ the tropicalisation of the natural inverse image $\pi^{-1}I \subseteq R[[t]][z]$ with respect to trivial valuation, intersected with $\{u=-1 \}$ equals $\trop_{\nu}(I);$ \cite{Markwig--Ren}*{Theorem 4}. 
 \end{remark}

Let us also recall the main result of \cite{Dyn-trop}. While preparing this article, we realised that a version of this theorem, presented without proof, appeared in \cite{Kazarnovski}*{Theorem 1}.

\begin{theorem}\label{thm:dyn_trop_toric}
 Let $Z\subseteq (\CC^*)^d$ be an irreducible subvariety of dimension $p,$ and $\overline{Z}$ be the closure of $Z$ in a compatible smooth projective toric variety $X_{\Sigma}$. Define $\Phi_m: X_{\Sigma}\longrightarrow X_{\Sigma}$ to be the unique continuous extension of 
\begin{align*}
    (\C^*)^d & \longrightarrow (\C^*)^d, \\
    z &\lmto z^m.
\end{align*}
Then,
$$
\frac{1}{m^{d-p}}\Phi_m^*[\overline{Z}] \longrightarrow \overline{\mathscr{T}}_{\Trop_0(Z)}, \quad \text{as $m\to \infty$},
$$
where  $\overline{\mathscr{T}}_{\Trop_0(Z)}$ is the extension by zero of $\mathscr{T}_{\Trop_0(Z)}$ to $X_{\Sigma}.$ Moreover, the supports also converge in Hausdorff metric. 
\end{theorem}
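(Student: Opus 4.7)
The strategy is to extract a weak limit, identify it as a tropical current via the $(S^1)^d$-invariance criterion (Proposition 4.6 of the excerpt), and then pin down the weights using the slicing/intersection machinery that has just been set up. All of the support-convergence claims will be handled via the Bergman--Bieri--Groves theorem of Remark~\ref{rem:Bergman}.

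First, I would control masses. Since $X_\Sigma$ is smooth projective and $\Phi_m$ extends to $X_\Sigma$ as a holomorphic (indeed toric) self-map, $\Phi_m^*$ acts on the Dolbeault class $\{[\overline Z]\}\in H^{d-p,d-p}(X_\Sigma,\R)$. On the toric generator $H^{1,1}$-classes $\Phi_m^*$ is multiplication by $m$, hence on $H^{d-p,d-p}$ it is multiplication by $m^{d-p}$. Thus the positive closed current $m^{-(d-p)}\Phi_m^*[\overline Z]$ has cohomology class $\{[\overline Z]\}$ and in particular uniformly bounded mass. By Banach--Alaoglu we may extract a subsequential weak limit $\mathscr T_\infty\in\cD_p(X_\Sigma)$, which is positive and closed. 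Because $\Log\circ\Phi_m=m\Log$ on $T_N$, the support of $\Phi_m^*[\overline Z]\cap T_N$ is $\Log^{-1}(\tfrac1m\Log Z)$; by Bergman--Bieri--Groves, $\tfrac1m\Log Z$ converges in Hausdorff distance to $|\Trop_0(Z)|$, so the supports of $m^{-(d-p)}\Phi_m^*[\overline Z]|_{T_N}$ converge in Hausdorff distance to $\Log^{-1}(|\Trop_0(Z)|)$, and any cluster current $\mathscr T_\infty$ is supported in $\overline{\Log^{-1}(|\Trop_0(Z)|)}$. Compatibility of $\Sigma$ with $\Trop_0(Z)$ (ensured by Theorem~\ref{thm:transverse-fiber}) controls the boundary behaviour along the toric divisors $D_\rho$ and upgrades this to the corresponding support convergence for the extensions $\overline{(\cdot)}$.

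Next, I would verify $(S^1)^d$-invariance of $\mathscr T_\infty$. On the torus, $\Phi_m^*[\overline Z]$ is invariant under the action of the subgroup $\mu_m^d\subseteq(S^1)^d$ of $m$-th roots of unity. For any fixed $k$ and any $m$ divisible by $k$, the current $m^{-(d-p)}\Phi_m^*[\overline Z]$ is thus $\mu_k^d$-invariant, so the limit $\mathscr T_\infty$ is $\mu_k^d$-invariant for every $k$; since $\bigcup_k\mu_k^d$ is dense in $(S^1)^d$ and the $(S^1)^d$-action is continuous on currents, $\mathscr T_\infty$ is $(S^1)^d$-invariant. Combined with the support constraint and Proposition~4.6 of the excerpt, this forces $\mathscr T_\infty|_{T_N}=\mathscr T_{\mathcal C'}$ for some balanced weighting $\mathcal C'$ of $|\Trop_0(Z)|$, and by the vanishing-mass property on $D_\rho$ guaranteed by compatibility, $\mathscr T_\infty=\overline{\mathscr T}_{\mathcal C'}$ on all of $X_\Sigma$.

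The remaining, and principal, task is to show $\mathcal C'=\Trop_0(Z)$ with the correct tropical weights, which will simultaneously force convergence of the full sequence (not just a subsequence). For each maximal cell $\sigma$ of $\Trop_0(Z)$ I would pick a rational affine plane $A$ of complementary dimension $d-p$ meeting $\mathcal C'$ transversely only at an interior point $a$ of $\sigma$, choose a refinement of $\Sigma$ compatible with $\Trop_0(Z)+A$, and apply the slicing statement of Proposition~\ref{prop:slicing} (whose hypotheses we have just verified) with $\overline{\mathscr S}_m:=m^{-(d-p)}\Phi_m^*[\overline Z]$. Then
\[
m^{-(d-p)}\,\Phi_m^*[\overline Z]\wedge[\overline T^{A}]\;\longrightarrow\;\overline{\mathscr T}_{\mathcal C'}\wedge[\overline T^{A}],
\]
and the left-hand side reduces, via projection formula and Proposition~\ref{prop:int-numb-tori}, to counting the points of $\Phi_m^{-1}(Z)\cap T^A$ lying over a small neighbourhood of $a$, normalised by $m^{d-p}$. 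That count, interpreted in $\C((t))$ after the change of variable $z\mapsto z^m$ exactly as in Lemma~\ref{lem:int-1}, is the classical tropical intersection number $w_\sigma\,[N:N_\sigma+N_A]$ of $\Trop_0(Z)$ with $A$ at $\sigma$; hence the weight on $\sigma$ in $\mathcal C'$ matches the standard tropical multiplicity. The main obstacle is precisely this last identification of weights: one must interpret the finite fibres of $\Phi_m|_Z\cap T^A$ in valuation-theoretic terms and take the $m\to\infty$ limit carefully. Since the limit $\overline{\mathscr T}_{\Trop_0(Z)}$ is thereby uniquely characterised, every cluster current equals it and the full sequence converges, completing the proof.
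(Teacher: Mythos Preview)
The paper does not give its own proof of this theorem: it is explicitly recalled as ``the main result of \cite{Dyn-trop}'' (with a version attributed to \cite{Kazarnovski}), and no argument is supplied here. So there is no proof in the present paper to compare against; anything you write would have to be checked against \cite{Dyn-trop}.

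On your sketch itself, two concrete issues. First, the slicing result you invoke, Proposition~\ref{prop:slicing}, is stated for a single rational \emph{hyperplane} $A$; you want a complementary-dimensional slice of codimension $d-p$, so you should be appealing to Theorem~\ref{thm:limit-closure} (the complete-intersection version) instead. More seriously, your identification of the weight on a top cell $\sigma$ is not actually carried out. You write that the normalised point count of $\Phi_m^{-1}(Z)\cap T^A$ near $a$ ``interpreted in $\C((t))$ after the change of variable $z\mapsto z^m$ exactly as in Lemma~\ref{lem:int-1}, is the classical tropical intersection number''---but Lemma~\ref{lem:int-1} concerns the passage from $\Trop_0$ over $\C[t^{\pm1},z^{\pm1}]$ to $\Trop_\nu$ over $\C((t))$ via the slice $\{u=-1\}$, and has nothing to do with counting $\Phi_m^{-1}(Z)\cap T^A$. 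What you really need here is an equidistribution/multiplicity argument showing that the local degree of $\Phi_m^{-1}(Z)\cap T^A$ over a tubular neighbourhood of $\Log^{-1}(a)$ is asymptotically $m^{d-p}$ times the tropical (initial-degeneration) multiplicity $w_\sigma$; this is exactly the substantive content of the result in \cite{Dyn-trop}, and your paragraph does not supply it. The mass bound, the $(S^1)^d$-invariance of cluster values, and the support convergence via Bergman--Bieri--Groves are all reasonable; the gap is the weight step.
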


Note that since the limit of a sequence of closed currents is closed, the above theorem implies that $\trop_0(Z)$ can be equipped with weights to become balanced. Note that the compatibility is in the following sense of Tevelev and Sturmfels: 
\begin{theorem}
\begin{itemize}
  \item [(a)]  The closure $\overline{Z}$ of $Z$ in $X_{\Sigma}$ is complete, if and only if, $\Trop_0(Z) \subseteq |\Sigma|;$ see \cite{Tevelev}. 
  \item [(b)] We have $|\Sigma|= \Trop_0(Z)$, if and only if, for every $\sigma \in \Sigma$ the intersection $ \cO_{\sigma}\cap \overline{Z}$ is non-empty and of pure dimension $p- \dim(\sigma);$ see \cite{Sturmfels-Tevelev}.
\end{itemize}
\end{theorem}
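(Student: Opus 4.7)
Both assertions are classical results about toric compactifications of subvarieties; in the spirit of this paper the plan is to deduce them from Theorem \ref{thm:dyn_trop_toric} combined with the Bieri--Groves realization recalled in Remark \ref{rem:Bergman} and the valuative criterion of properness, with the orbit-cone correspondence of toric geometry as the link between the combinatorics of $\Sigma$ and the boundary geometry of $X_\Sigma$.

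For part (a), the approach is the valuative criterion. A map $\Spec \C((t)) \to Z$ corresponds to a point of $Z(\C((t)))$; its coordinates have valuations $-w \in \R^d$, and by Lemma \ref{lem:int-1} together with Remark \ref{rem:Bergman}, every $w \in \Trop_0(Z)$ arises (up to positive scaling) from such a point. The orbit-cone correspondence for $X_\Sigma$ says that this $\C((t))$-point extends to a $\C[[t]]$-point of $X_\Sigma$ if and only if $w$ lies in some cone $\sigma \in \Sigma$, in which case the limit lies in $\cO_\sigma$. Thus $\overline{Z}$ is complete $\Leftrightarrow$ every $\C((t))$-point of $Z$ extends to $\overline{Z}$ $\Leftrightarrow$ every $w \in \Trop_0(Z)$ lies in $|\Sigma|$. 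Alternatively one can argue via Theorem \ref{thm:dyn_trop_toric}: the Hausdorff convergence of supports forces $\supp{\Phi_m^*[\overline{Z}]} \to \supp{\overline{\sT}_{\Trop_0(Z)}}$ in $X_\Sigma$, and this Hausdorff limit can be a closed subset of $X_\Sigma$ only if the underlying tropical set is contained in $|\Sigma|$.

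For part (b), the direction ``$\Rightarrow$'' proceeds as follows. If $|\Sigma| = \Trop_0(Z)$, fix $\sigma \in \Sigma$ and $w$ in the relative interior of $\sigma$; by Bieri--Groves, $w$ is realised by a valuation on $K(Z)$ whose associated $\C((t))$-point has limit in $\cO_\sigma$, showing $\cO_\sigma \cap \overline{Z} \neq \varnothing$. Purity of dimension $p - \dim(\sigma)$ is then obtained by pulling back under $\Phi_m$ and invoking Theorem \ref{thm:dyn_trop_toric}: the Hausdorff limit of $\Phi_m^{-1}(\overline{Z}) \cap \cO_\sigma$ equals $\supp{\overline{\sT}_{\Trop_0(Z)}} \cap \cO_\sigma$, which by the explicit fiber description of Definition \ref{def:T-H} and Theorem \ref{thm:transverse-fiber} has pure complex dimension $p - \dim(\sigma)$; semicontinuity of fiber dimension transfers this to $\overline{Z}$ itself. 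Conversely for ``$\Leftarrow$'', the non-emptiness hypothesis forces the relative interior of each $\sigma \in \Sigma$ into $\Trop_0(Z)$, again via the valuation attached to a point of $\cO_\sigma \cap \overline{Z}$; this gives $|\Sigma| \subseteq \Trop_0(Z)$. Since $\overline{Z} = \bigcup_{\sigma} (\cO_\sigma \cap \overline{Z})$ exhausts $\overline{Z}$ using cones of $\Sigma$, so that $\overline{Z}$ is compact, part (a) yields $\Trop_0(Z) \subseteq |\Sigma|$, and equality follows.

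The hard part will be the pure dimension statement $\dim(\cO_\sigma \cap \overline{Z}) = p - \dim(\sigma)$, which is the genuine content of the Sturmfels--Tevelev theorem; Bieri--Groves surjectivity delivers non-emptiness but not purity. Within the framework here, the cleanest route is to combine the Hausdorff support convergence of Theorem \ref{thm:dyn_trop_toric} with the transversality structure of tropical current fibers (Theorem \ref{thm:transverse-fiber}) to force equidimensionality, but any such argument must be supplemented by a careful local analysis along each boundary stratum $\cO_\sigma$; the classical Gröbner-theoretic initial-degeneration approach remains the most transparent way to execute this step.
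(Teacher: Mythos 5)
The paper does not prove this theorem at all: both parts are quoted from the literature (Tevelev for (a), Sturmfels--Tevelev for (b)), so there is no internal argument to compare yours against, and a proof here would have to be complete on its own terms. Yours is not: you explicitly defer the purity-of-dimension claim in (b) to ``a careful local analysis'' or ``the classical Gr\"obner-theoretic approach,'' and that claim is precisely the content of the Sturmfels--Tevelev theorem, so the proposal is an outline rather than a proof. Moreover, the current-theoretic route you suggest for purity does not work as stated. Theorem \ref{thm:dyn_trop_toric} gives Hausdorff convergence of $\supp{\Phi_m^*[\overline{Z}]}$ as a closed subset of $X_\Sigma$; this controls neither the intersection with a fixed locally closed stratum $\cO_\sigma$ nor its dimension, since Hausdorff limits can jump in dimension in either direction. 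Worse, $\Phi_m$ preserves each orbit and is finite on it, so $\dim\bigl(\Phi_m^{-1}(\overline{Z})\cap\cO_\sigma\bigr)=\dim\bigl(\overline{Z}\cap\cO_\sigma\bigr)$ for \emph{every} $m$: passing to the limit gains nothing, and ``semicontinuity of fiber dimension'' has no purchase here.

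There are two further gaps. In the ``$\Leftarrow$'' direction of (b), the sentence ``Since $\overline{Z}=\bigcup_\sigma(\cO_\sigma\cap\overline{Z})$ exhausts $\overline{Z}$ using cones of $\Sigma$, so that $\overline{Z}$ is compact'' is a non sequitur: every closed subset of $X_\Sigma$ decomposes this way, and when $\Sigma$ is not complete this gives no compactness, so you cannot invoke part (a) to conclude $\Trop_0(Z)\subseteq|\Sigma|$. In the same direction, producing one point of $\Trop_0(Z)$ in the relative interior of each $\sigma$ does not yield $\sigma\subseteq\Trop_0(Z)$; one genuinely needs the dimension count (the orbit--cone correspondence matched against pure dimension $p-\dim\sigma$) to force each cone of $\Sigma$ to lie entirely in the tropicalisation. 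Finally, in (a) the reduction of the valuative criterion to $\C((t))$-points should be justified: properness is tested against DVRs (Noetherian reduction), and one needs that the resulting integral weight vectors are dense in $\Trop_0(Z)$ together with closedness of $|\Sigma|$ to pass from ``every such $w$ lies in $|\Sigma|$'' to the full containment. The skeleton you propose is the standard one, but as written the decisive steps are either missing or rest on assertions that are false in general.
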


\begin{theorem}\label{thm:dyn-trop-nont}
    Let $I \subseteq \C[t^{\pm 1}, z^{\pm 1}]$ be an ideal with the associated $(p+1)$-dimensional algebraic variety $W= \mathbb{V}(I) \subseteq (\C^*)^{d+1}.$ Assume that the projection onto the first coordinate $\pi_1: W \lto \C^*$ is a surjective and Zariski-closed map. Further,  $\pi_1(W_{\text{sing}}) \subsetneq \C^*$, the projection of singular points of $W$ is a proper subset of $\C^*$. If we denote the fibers by $W_{t} := \pi_1^{-1}(t),$ then
    \begin{itemize}
        \item [(a)]  $\frac{1}{m^{d-p}}\Phi_m^* [W_{e^m}] \lto \sT_{\Trop_{\nu}(I)}, \quad \text{as $m \to \infty$ },$ in the sense of currents in $\sD_{p}((\C^*)^d)$, and we have identified $\iota(I)= I.$
    
    \item [(b)]  $\Trop_{\nu}(I)$ can be equipped with weights to become balanced.
    \item [(c)] $\lim \supp{\frac{1}{m^{d-p}}\Phi_m^* [W_{e^m}]} = \supp{\sT_{\Trop_{\nu}(I)}}.$
    \item [(d)] On a toric variety $X_{\Sigma}$ compatible with $\trop_0(W)+ \{u=-1 \},$
    $$
    \frac{1}{m^{d-p}}\Phi_m^* [\overline{W}_{e^m}] \lto \overline{\sT}_{\Trop_{\nu}(I)}, \quad \text{as $m \to \infty$.}
    $$
    \end{itemize}
\end{theorem}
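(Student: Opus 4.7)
The plan is to lift the problem to the ambient variety $W \subseteq (\C^*)^{d+1}$, apply the dynamical tropicalisation theorem (Theorem~\ref{thm:dyn_trop_toric}) to $W$ directly, and then descend back to $(\C^*)^d$ by slicing along the hyperplane $\{t = e\}$ via Theorem~\ref{thm:limit-closure}. Introduce $\sigma_m \colon (\C^*)^{d+1} \lto (\C^*)^{d+1}$, $(t,z) \lmto (t^m, z^m)$; then $\sigma_m$ restricted to the slice $\{t = e\}$ is precisely $\Phi_m$ after identifying $\{e\} \times (\C^*)^d \simeq (\C^*)^d$. Fix a smooth projective toric fan $\widetilde{\Sigma}$ in $\R^{d+1}$ compatible with a refinement of $\trop_0(W)$ as well as with the rational hyperplane $\{u = -1\}$.

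Since $\dim W = p+1$ and $(d+1) - (p+1) = d - p$, Theorem~\ref{thm:dyn_trop_toric} applied to $\overline{W} \subseteq X_{\widetilde{\Sigma}}$ yields
$$
\frac{1}{m^{d-p}} \sigma_m^*[\overline{W}] \lto \overline{\sT}_{\trop_0(W)},
$$
with Hausdorff convergence of supports. The surjectivity of $\pi_1 \colon W \to \C^*$ implies that the first-coordinate projection of $\trop_0(W)$ covers $\R$; hence every top-dimensional cone of $\trop_0(W)$ meeting $\{u = -1\}$ does so transversely in its relative interior, and $\trop_0(W)$ intersects $\{u = -1\}$ transversely. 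Taking $B := \{u = -1\} \subseteq \R^{d+1}$, which is a single-hyperplane complete intersection with associated subtorus $T^B = \{e\} \times (\C^*)^d$ (by the definition of $T^A$), Theorem~\ref{thm:limit-closure} gives
$$
\frac{1}{m^{d-p}} \sigma_m^*[\overline{W}] \wedge \bigl[\overline{T^{B}}\bigr] \lto \overline{\sT}_{\trop_0(W)} \wedge \bigl[\overline{T^B}\bigr].
$$

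It remains to identify both sides. Since $\pi_1(W_{\mathrm{sing}}) \subsetneq \C^*$ is a proper Zariski closed, hence finite, subset, for all but finitely many $m$ the fiber $W_{e^m}$ is smooth of pure dimension $p$ and $\sigma_m^{-1}(W)$ meets $\{t = e\}$ transversely. Because $\sigma_m$ is étale on $(\C^*)^{d+1}$, $\sigma_m^*[W] = [\sigma_m^{-1}(W)]$, so the transverse slice at $\{t = e\}$ equals $[\{e\} \times \Phi_m^{-1}(W_{e^m})]$; under the identification $\{e\} \times (\C^*)^d \simeq (\C^*)^d$ this is $\Phi_m^*[\overline{W}_{e^m}]$. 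On the right, Proposition~\ref{prop:slice} shows that $\sT_{\trop_0(W)} \wedge [T^B]$ is a positive closed $(S^1)^d$-invariant current on $(\C^*)^d$ supported on $\Log^{-1}(\trop_0(W) \cap \{u = -1\}) = \Log^{-1}(\trop_\nu(I))$, the last equality being Lemma~\ref{lem:int-1}; such a current is a tropical current, and Theorem~\ref{thm:closed-balanced} then equips $\trop_\nu(I)$ with the required balanced weights. Combining these identifications with the convergence from the preceding step yields (a) and (d), while positivity and closedness of the limit immediately deliver (b). Part (c) follows from the Hausdorff convergence of supports in the first step together with Lemma~\ref{lem:slice-supp} for the upper inclusion and a pointwise argument from the transversality for the lower one.

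The principal obstacle is the penultimate identification: verifying that $\sigma_m^*[\overline{W}] \wedge [\overline{T^B}]$ agrees with $\Phi_m^*[\overline{W}_{e^m}]$ with the correct multiplicities. This rests on the étaleness of $\sigma_m$ on $(\C^*)^{d+1}$, on transverse slicing (guaranteed for almost every $m$ by the hypothesis $\pi_1(W_{\mathrm{sing}}) \subsetneq \C^*$), and on absorbing the finitely many exceptional $m$ by the continuity established in the slicing step. A secondary but essential point is that $\widetilde{\Sigma}$ must be chosen sufficiently refined to simultaneously accommodate $\trop_0(W)$ and the hyperplane $\{u = -1\}$, which is routine but should be invoked before applying Theorem~\ref{thm:dyn_trop_toric}.
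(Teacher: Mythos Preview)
Your approach is valid and runs closely parallel to the paper's, with one variant worth recording. You slice the converging sequence $\frac{1}{m^{d-p}}\sigma_m^*[\overline W]$ by the \emph{single} subtorus $[T^B]=[\{t=e\}]$ and invoke Theorem~\ref{thm:limit-closure}. The paper instead writes
\[
\frac{1}{m^{d-p}}\Phi_m^*[W_{e^m}]=\frac{1}{m^{d-p-1}}\sigma_m^*[W]\;\wedge\;\frac{1}{m}\sigma_m^*[\{t=e^m\}],
\]
so that \emph{both} factors converge (the second to $\sT_{\{u=-1\}}$), applies Proposition~\ref{prop:slicing}, and obtains $\sT_{\Trop_0(W)}\wedge\sT_{\{u=-1\}}$ as a current on $(\C^*)^{d+1}$; the identification with $\sT_{\Trop_\nu(I)}$ then comes from Theorem~\ref{thm:main-int} together with Lemma~\ref{lem:int-1}. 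Your single-slice version sits more squarely inside the hypotheses of Theorem~\ref{thm:limit-closure} and bypasses the averaging over $m$-th roots of unity; in exchange you must identify the slice $\sT_{\Trop_0(W)}\wedge[T^B]$ with a tropical current on $(\C^*)^d$ via the Corollary after Proposition~\ref{prop:slice}, rather than getting the weights directly from the stable-intersection Theorem~\ref{thm:main-int}.

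Two points need tightening. First, the transversality of $\Trop_0(W)$ with $\{u=-1\}$ is not a consequence of surjectivity of $\pi_1$; it holds because $\Trop_0(W)$ is a \emph{fan}: every cone passes through the origin, so any top cone meeting $\{u=-1\}$ cannot have affine span parallel to it. Surjectivity only guarantees that such cones exist. Second, your assertion that ``for all but finitely many $m$ the fiber $W_{e^m}$ is smooth and $\sigma_m^{-1}(W)$ meets $\{t=e\}$ transversely'' is precisely the content of the paper's Lemma~\ref{lem:trans}, and does not follow from $\pi_1(W_{\text{sing}})\subsetneq\C^*$ alone: one must also show that $\pi_1|_{W_{\text{reg}}}$ has only finitely many critical values, and this is where the hypothesis that $\pi_1$ is Zariski-closed is used. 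You flag this as the principal obstacle, correctly, but it requires a genuine lemma rather than a remark.
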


For the proof, we need the following: 
\begin{lemma}\label{lem:trans}
Let $W\subseteq (\C^*)^{d+1}$ be a $(p+1)$-dimensional subvariety, such that the projection onto the first factor, $\pi_1:(\C^*)^{d+1} \longrightarrow \C^*  $ is surjective and a Zariski closed morphism, and the projection of the singular points are a proper subset: $\pi_1(W_{\text{sing}})\subsetneq \C^*$. Then, for a sufficiently large $|t_0|>>0$
$$
[W_{t_0}] = [\pi_{1}^{-1}(t_0)] = [\{t=t_0\}] \wedge [W].
$$
\end{lemma}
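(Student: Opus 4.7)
The plan is to choose $|t_0|$ large enough that $\{t=t_0\}$ meets $W$ only in smooth points of $W$ at which the restriction $\pi_1|_W$ is submersive, and then compute the intersection current locally via Poincar\'e--Lelong.

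First, since $\pi_1$ is a Zariski closed surjective morphism and $W_{\text{sing}}$ is Zariski closed in $W$, the image $\pi_1(W_{\text{sing}})$ is Zariski closed in $\C^*$. By hypothesis it is a proper subset, hence a finite set. Similarly, by generic smoothness in characteristic zero applied to $\pi_1|_{W_{\text{reg}}}:W_{\text{reg}}\to \C^*$ on each irreducible component of $W$ that dominates $\C^*$, the set of critical values of $\pi_1|_{W_{\text{reg}}}$ is a proper Zariski closed, hence finite, subset of $\C^*$. For $|t_0|$ sufficiently large, $t_0$ avoids the union of these two finite sets.

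With this choice, every point of $W_{t_0}=\pi_1^{-1}(t_0)$ is a smooth point of $W$ at which $d(\pi_1|_W)$ is surjective. Equivalently, $(t-t_0)|_W$ is part of a holomorphic coordinate system on $W$ near each point of $W_{t_0}$, so $W_{t_0}$ is a smooth reduced subvariety of $W_{\text{reg}}$ of pure dimension $p$, and $(t-t_0)|_{W_{\text{reg}}}$ vanishes exactly to first order along $W_{t_0}$.

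Finally, the wedge product is defined in the sense of Bedford--Taylor and Demailly by
\[
[\{t=t_0\}]\wedge [W] \;=\; dd^c\bigl(\log|t-t_0|\,[W]\bigr),
\]
since $\log|t-t_0|$ is a plurisubharmonic potential whose polar locus $\{t=t_0\}$ meets $\supp([W])=W$ in complex codimension one. Using $[W]=[W_{\text{reg}}]$ and working locally near $W_{t_0}\subseteq W_{\text{reg}}$ in the coordinates above, the Poincar\'e--Lelong formula on the complex manifold $W_{\text{reg}}$ applied to the holomorphic function $(t-t_0)|_{W_{\text{reg}}}$ gives
\[
dd^c \log\bigl|(t-t_0)|_{W_{\text{reg}}}\bigr| \;=\; [W_{t_0}],
\]
with multiplicity one, yielding the desired identity. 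The only real point to check is that the intersection multiplicity along $W_{t_0}$ is exactly one; this is precisely what the regular-value condition secured, so the remainder is a routine application of Bedford--Taylor theory.
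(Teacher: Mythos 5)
Your proof is correct and follows essentially the same route as the paper: both arguments reduce to showing that the ``bad'' values of $t$ (images of singular points together with the critical values of $\pi_1|_{W_{\mathrm{reg}}}$) form a finite subset of $\C^*$, so that a sufficiently large $t_0$ gives a transversal intersection contained in $W_{\mathrm{reg}}$ and hence multiplicity one. The only difference is presentational: the paper detects the critical locus via an explicit Jacobian-minor computation where you invoke generic smoothness, and you spell out the concluding Poincar\'e--Lelong step that the paper leaves implicit.
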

\begin{proof}
Let us first fix an ideal associated to $I = \mathbb{I}(W) = \langle f_1, \dots , f_k\rangle \subseteq \C[t^{\pm 1}, z^{\pm 1}].$ At any regular point $w\in W_{\text{reg}},$ $T_w W = \ker J(f)(w) = \big(\frac{\partial f_i}{\partial z_j}(w) \big)_{k\times (d+1)}$ is of dimension $p+1.$
We consider the \emph{critical set}
$$
C=\big\{w\in W: \dim \ker \begin{pmatrix}
\nabla_w t \\
Jf(w) 
\end{pmatrix} \geq p+1 \big\},
$$
where $\nabla_w t$ is the gradient of $t$ at $w$. We have that $\nabla_w t = e_1,$ the first element of the standard basis of the $\C$-vector space $\C^{d+1}.$ As a result, to have the rank of $\begin{pmatrix}
e_1 \\
Jf(w) 
\end{pmatrix}$ less than or equal to $(d+1)-(p-1)$ we must have all the minors of size $ (d-p+1)\times (d-p+1)$ equal to zero, which are the defining equations for $C.$  The set Zariski-closed set $C$ contains all the points such that either
\begin{itemize}
    \item $w \in W$ is regular and the intersection $W \cap \pi_{1}^{-1}(\pi_1(w))$ is not transversal. Note that in this case, $\text{Image}(Jf(w))\subseteq \text{ker}(e_1).$
   \item $w \in W$ is singular, and therefore $\ker (Jf(w)) > p+1.$
\end{itemize}
Since $\pi_1$ is a closed map, if we observe that $\pi_1(C) \subsetneq \C^*$, then $\pi_1(C)$ is a finite set and we conclude. Note that $\pi_1(W_{\text{reg}} \cap C) = \C^*,$ implies that $W$ is contained in $\{t=t_0 \}$ for some $t_0 \in \C^*,$ which is not the case as $\pi_1: W \lto \C^*$ is surjective. Further. $\pi(W_{\text{sing}}) \subsetneq \C^*$ by assumption of the lemma, and we conclude. 
\end{proof}

\begin{proof}[Proof of Theorem \ref{thm:dyn-trop-nont}]
  By the preceding lemma, and the fact that $\Phi_m$ preserves transversal intersection, we have 
  $$ \frac{1}{m^{d-p}}\Phi_m^* [W_{e^m}] = \frac{1}{m^{d-(p+1)}} \Phi_m^*[W] \wedge \frac{1}{m} \Phi_m^*[\{t= e^m\}],$$
  for a large $m.$ Since $\Trop_0(W)$ is a fan, it is transversal to the plane $\{u =-1\} \subset \R^{d+1}$. In consequence, we can use Proposition \ref{prop:slicing} to write
  $$
  \lim \frac{1}{m^{d-p}}\Phi_m^* [W_{e^m}] = \big(\lim  \frac{1}{m^{d-(p+1)}} \Phi_m^*[W]   \big) \wedge \big(\lim \frac{1}{m} \Phi_m^*[\{t= e^m\}] \big)
  $$
 By Theorem \ref{thm:dyn_trop_toric}, restricted to $(\C^*)^{n+1},$ and the fact that we used $\Log = (-\log|\cdot| , \dots , -\log|\cdot|)$ in the definition of tropical currents,  the above limits yield
  $$
    \lim \frac{1}{m^{d-p}}\Phi_m^* [W_{e^m}] = \sT_{\Trop_0(W)} \wedge \sT_{\{u\, = \, -1\}}.
  $$
  Applying Theorem \ref{thm:main-int} and Lemma \ref{lem:int-1}, we obtain the equality. 
  \vskip 2mm
  For the assertion (b), note that the limit $\sT_{\Trop_{\nu}(I)}$ is a closed current and Theorem \ref{thm:closed-balanced} implies that $\Trop_{\nu}(I)$ is naturally balanced. To observe (c), note that (a) implies 
  $$
  \lim \supp{\frac{1}{m^{d-p}}\Phi_m^* [W_{e^m}]}  \supseteq \supp{\sT_{\Trop_{\nu}(I)}}.
  $$
  However, because of transversality,   $\supp{\sT_{\Trop_{\nu}(I)}} = \supp{\sT_{\Trop_0(W)}} \cap \supp{ \sT_{\{u\, = \, -1\}}}.$ At the same time, 
 $$
  \lim \supp{\Phi_m^*[W_{e^m}]} = \lim \supp { \Phi_m^*[W] } \cap \supp{\Phi_m^*[\{t= e^m\}]}.
 $$
Moreover, for the Hausdorff limit of sets $\lim (A_i \cap B_i) \subseteq (\lim A_i) \cap (\lim B_i).$ This implies
$$
  \lim \supp{\Phi_m^*[W_{e^m}]} \subseteq \supp{\sT_{\Trop_0(W)}} \cap \supp{ \sT_{\{u\, = \, -1\}}},
$$
  which implies (c). Now, (d) is implied by Theorem \ref{thm:limit-closure}.
\end{proof}

In the setting of the previous theorem, a generalisation of Bergman's theorem (see Remark \ref{rem:Bergman}) asserts that 
$$
\Log_{t} (W_t) \lto \Trop(I), \quad \text{as $t \lto \infty,$}
$$
where $\Log_t$ is the logarithm with base $t.$ This theorem can be understood as a counterpart of Lemma \ref{lem:int-1} for tropicalisation with $\Log.$ This generalisation was finally proved by Jonsson in \cite{Jonsson}, and we can now deduce a sequential analogue: 
\begin{corollary}
  In the setting of the previous theorem 
  $$
  \frac{1}{m} \Log (W_{e^m}) \lto \Trop_{\nu}(I), \quad \text{as $m \to \infty,$}
  $$
  in the Hausdorff metric in compact subsets of $\R^d.$
\end{corollary}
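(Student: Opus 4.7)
The plan is to deduce this corollary directly from the Hausdorff convergence of supports established in part~(c) of Theorem~\ref{thm:dyn-trop-nont}, by transferring it through the continuous map $\Log$.

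The key observation is the identity $\Log(z^m) = m\,\Log(z)$ for every $z \in (\C^*)^d$, which is immediate from the definition $\Log(z_1,\dots,z_d) = (-\log|z_1|,\dots,-\log|z_d|)$. Thus, for any $w \in W_{e^m}$ and any $m$-th root $z$ of $w$, one has $\Log(z) = \tfrac{1}{m}\Log(w)$, giving
$$
\tfrac{1}{m}\Log(W_{e^m}) \;=\; \Log\big(\Phi_m^{-1}(W_{e^m})\big) \;=\; \Log\big(\supp{\Phi_m^*[W_{e^m}]}\big).
$$
By Theorem~\ref{thm:dyn-trop-nont}(c), the set inside the $\Log$ on the right converges in the Hausdorff metric to $\supp{\sT_{\Trop_\nu(I)}} = \Log^{-1}(\Trop_\nu(I))$.

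It remains to transfer this Hausdorff convergence through $\Log$. Fix a compact $K \subseteq \R^d$; I verify the two standard Kuratowski--Painlev\'e inclusions on $K$. First, given $y \in \Trop_\nu(I) \cap K$, choose any $z \in \Log^{-1}(y)$; the support convergence furnishes $z_m \in \Phi_m^{-1}(W_{e^m})$ with $z_m \to z$, whence $\Log(z_m) \to y$ and $\Log(z_m) \in \tfrac{1}{m}\Log(W_{e^m})$. Conversely, let $y_m \in \tfrac{1}{m}\Log(W_{e^m}) \cap K$ converge to some $y \in K$; by the identity above, $y_m = \Log(z_m)$ for some $z_m \in \Phi_m^{-1}(W_{e^m})$. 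The bounds $|z_{m,j}| = e^{-y_{m,j}}$ together with the compactness of the argument torus $(S^1)^d$ keep $(z_m)$ in a fixed compact subset of $(\C^*)^d$, so after extracting a subsequence $z_m \to z$ with $\Log(z) = y$. The support convergence then forces $z \in \Log^{-1}(\Trop_\nu(I))$, and hence $y \in \Trop_\nu(I)$.

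There is no substantive obstacle: the dynamical content is already absorbed in Theorem~\ref{thm:dyn-trop-nont}, and this corollary is a formal consequence of pushing the support convergence through $\Log$. The only care needed is in the compactness step of the sequential argument, which reduces to the elementary fact that the preimage $\Log^{-1}(K)$ of a compact set $K \subseteq \R^d$ is compact modulo the argument torus $(S^1)^d$.
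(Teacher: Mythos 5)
Your proposal is correct and follows essentially the same route as the paper: both rest on the identity $\tfrac{1}{m}\Log(W_{e^m}) = \Log\big(\supp{\Phi_m^*[W_{e^m}]}\big)$ together with Theorem \ref{thm:dyn-trop-nont}(c) and the compatibility of $\Log$ with Hausdorff convergence on compact sets. You merely spell out the latter compatibility via the two Kuratowski--Painlev\'e inclusions, which the paper leaves implicit.
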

\begin{proof}
Note that for any variety $Z \subseteq (\C^*)^{d+1},$ as a set $\Log (\Phi_m^{-1}(Z)) = \frac{1}{m}\Log(Z).$ Therefore, 
$$
\Log~ \supp{  \frac{1}{m^{d-p}}\Phi_m^* [W_{e^m}]} = \frac{1}{m} {\Log (W_{e^m})}. 
$$
The assertion now follows from Theorem \ref{thm:dyn-trop-nont}(c) and continuity of $\Log$ with respect to the Hausdorff metric on compact sets. 
\end{proof}

In light of Theorem \ref{thm:dyn_trop_toric}, the following result is a very special case of Theorem \ref{thm:int-general}. It is analogous to \cite{BJSST}*{Lemma 3.2} and \cite{Osserman-Payne}*{Theorem 1.2}.

\begin{theorem}\label{thm:proper-int}
Assume that $W$ and $V$ are two algebraic subvarieties of $(\C^*)^d$ with respective dimensions $p$ and $q,$ with $p+q \geq d.$ Assume that $\Trop_0(V)$ and $\Trop_0(W)$ intersect transversely, then 
$$
\frac{1}{m^{2d-(p+q)}}\Phi^*_{m} \big([W]  \wedge [V] \big) \lto \sT_{\Trop_{0}(W) \cdot \Trop_{0}(V)}, \quad \text{as $m\to \infty.$}
$$
Moreover, 
$$\sT_{\Trop_0(W \cap V)} \leq \sT_{\Trop_{0}(W) \cdot \Trop_{0}(V)}.$$ 
In particular, the corresponding induced weights for $\tau \in \Trop_0(W \cap V),$ is less than or equal to the weight of $\tau$ induced by $\Trop_{0}(W) \cdot \Trop_{0}(V).$

\begin{example}
To see that the inequality in the previous theorem can be strict, let us consider the subvarieties of $(\C^*)^2,$ $W= \mathbb{V}(z_2-1)$ and $V= \mathbb{V}(z_2 - z_1^2 - 1 )$. We have that
\begin{align*}
   \Trop_0(W) &=  \Trop (\max\{ x_2, 2x_1, 0\}), \\
   \Trop_0(V) &= \Trop( \max \{x_2, 0 \}).
\end{align*}
In Example \ref{eg:stable-int}, we discussed that the stable intersection of these two cycles is the origin $(0,0) \in \R^2$ with multiplicity 2. Note that the (set-theoretic) intersection $W \cap V  = \{(0, 1)\},$ thus $\Trop_0 (W \cap V) = (0,0)$ with multiplicity 1, whereas $\Trop_0(W) \cdot \Trop_0(V) = (0,0)$ with multiplicity 2. We obtain 
$$
2\, \sT_{\Trop_0(W \cap V)} = \sT_{\Trop_{0}(W) \cdot \Trop_{0}(V)}.
$$
Note that this inequality, gladly, does not contradict \cite{Osserman-Payne}*{3.3.1} (a consequence of \cite{Fulton_int}*{8.2}),  which the scheme-theoretic intersection $W \cap V$ is considered. 
\end{example}

\begin{proof}[Proof of Theorem \ref{thm:proper-int}]
    Assume that $X_{\Sigma}$ is a toric variety compatible with $\Trop_0(W)+ \Trop_0(V).$ We need to show that the hypothesis (a), $\dots$, (d) of Theorem \ref{thm:int-general} for $\overline{\sW}_m= {m^{p-n}}\Phi^*_m[\overline{W}]$ and $\overline{\sV}_m= {m^{q-n}}\Phi^*_m[\overline{V}]$ is satisfied. Note that hypotheses (a) and (b) are implied by Theorem \ref{thm:dyn_trop_toric}. The hypotheses (c) and (d) for $\overline{\sW}_1$ and $\overline{\sV}_1$ are a result of \cite{Osserman-Payne}*{Proposition 3.3.2}. The hypotheses (c) and (d) for any $m$ are implied by the fact that $\Phi_m^* \big([\overline{W}] \wedge [\overline{V}]\big) =\Phi_m^*[\overline{W}] \wedge \Phi_m^*[\overline{V}].$ Therefore, 
    $$
    \overline{\sW}_m \wedge \overline{\sV}_m \lto \overline{\sT}_{\Trop_{0}(W)} \wedge \overline{\sT}_{\Trop_{0}(V)}, \quad \text{as $m \to \infty,$}
    $$
    and the first assertion is obtained by restricting to $T_N \subseteq X_\Sigma$ and Theorem \ref{thm:main-int}. To see the second assertion, note that by Theorem \ref{thm:dyn_trop_toric},
    $$
    \frac{1}{m^{2d-(p+q)}}\Phi^*_{m} [W \cap V ]  \lto \sT_{\Trop_0(W\cap V)}, \quad \text{as $m \to \infty.$}
    $$
    However, as currents $[W \cap V] \leq [W] \wedge [V]$ since the right-hand side might produce multiplicities on the intersection. Applying $\Phi_m^*$ to both sides preserves this inequality.  Therefore, for every $m,$
    $$
    \Phi^*_{m} [W \cap V ]  \leq \Phi_m^*[W] \wedge \Phi_m^*[V]. 
    $$
   We can conclude by taking the limit $m \lto \infty.$

\end{proof}
\end{theorem}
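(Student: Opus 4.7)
The plan is to apply Theorem \ref{thm:int-general} to the two sequences
\[
\overline{\sW}_m := \frac{1}{m^{d-p}}\Phi_m^*[\overline{W}], \qquad \overline{\sV}_m := \frac{1}{m^{d-q}}\Phi_m^*[\overline{V}],
\]
on a smooth projective toric variety $X_\Sigma$ compatible with $\Trop_0(W)+\Trop_0(V)$. First I would check the hypotheses of Theorem \ref{thm:int-general}. Hypotheses (a) and (b), namely the convergence of the currents and of their supports to $\overline\sT_{\Trop_0(W)}$ and $\overline\sT_{\Trop_0(V)}$, are given directly by the dynamical tropicalisation Theorem \ref{thm:dyn_trop_toric}. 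For hypothesis (c), I would note that $\Phi_m^*\big([\overline W]\wedge[\overline V]\big)=\Phi_m^*[\overline W]\wedge \Phi_m^*[\overline V]$, so it suffices to check that $[\overline W]\wedge[\overline V]$ is well-defined; this follows from Osserman--Payne \cite{Osserman-Payne}*{Proposition 3.3.2}, which uses precisely the transversality of $\Trop_0(W)$ and $\Trop_0(V)$ together with compatibility of $\Sigma$ to force the expected dimension of $\supp\overline{W}\cap\supp\overline{V}\cap D_\rho$ for every ray $\rho$.

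Once these hypotheses are verified, Theorem \ref{thm:int-general} gives
\[
\overline{\sW}_m\wedge\overline{\sV}_m \;\lto\; \overline{\sT}_{\Trop_0(W)}\wedge\overline{\sT}_{\Trop_0(V)}
\]
in $X_\Sigma$. Restricting to the open torus $T_N\simeq(\C^*)^d$ and invoking the main stable-intersection identity, Theorem \ref{thm:main-int}, converts the right-hand side into $\sT_{\Trop_0(W)\cdot\Trop_0(V)}$, which proves the first assertion.

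For the inequality $\sT_{\Trop_0(W\cap V)}\leq \sT_{\Trop_0(W)\cdot\Trop_0(V)}$, my approach is to compare two currents on $(\C^*)^d$ at the level of approximants. Since the scheme-theoretic intersection can produce fewer multiplicities than the wedge product of integration currents, one has $[W\cap V]\leq [W]\wedge[V]$ as positive currents. Pulling back under $\Phi_m$ preserves positivity, so after rescaling by $m^{p+q-2d}$ the inequality persists for every $m$. Applying Theorem \ref{thm:dyn_trop_toric} to (each irreducible component of) $W\cap V$ of dimension $p+q-d$ shows that the left-hand side converges to $\sT_{\Trop_0(W\cap V)}$, while by the first part the right-hand side converges to $\sT_{\Trop_0(W)\cdot\Trop_0(V)}$. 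Passing to the limit in the inequality yields the claim.

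The main obstacle I anticipate is the verification of hypothesis (c) of Theorem \ref{thm:int-general}, i.e.\ that $\Phi_m^*[\overline W]\wedge \Phi_m^*[\overline V]$ is well-defined for all $m$. The key observation is that $\Phi_m$ is a finite étale cover on $T_N$ and permutes the toric orbits, so transversality of the toric boundary intersections is preserved; combined with the cited Osserman--Payne proposition, this yields the required Cauchy--Riemann dimension bounds uniformly in $m$, after which the rest of the argument is a clean packaging of previously established convergence theorems.
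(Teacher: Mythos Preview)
Your proposal is correct and follows essentially the same approach as the paper's proof: both apply Theorem~\ref{thm:int-general} to the sequences $m^{p-d}\Phi_m^*[\overline W]$ and $m^{q-d}\Phi_m^*[\overline V]$, verify hypotheses (a) and (b) via Theorem~\ref{thm:dyn_trop_toric}, verify (c) via Osserman--Payne together with the identity $\Phi_m^*([\overline W]\wedge[\overline V])=\Phi_m^*[\overline W]\wedge\Phi_m^*[\overline V]$, then restrict to $T_N$ and invoke Theorem~\ref{thm:main-int}; the inequality is obtained in both by passing the pointwise bound $[W\cap V]\leq [W]\wedge[V]$ through $\Phi_m^*$ and taking limits. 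Your added remark that $\Phi_m$ is a finite \'etale cover on $T_N$ permuting the toric orbits is a helpful clarification for why the Cauchy--Riemann dimension bounds persist for all $m$.
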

The tropical version of the following was observed in various places \cite{Osserman-Payne, Maclagan-Sturmfels}, which does not need to assume the proper intersection of tropicalisations. 
  
\begin{theorem}\label{thm:averaging}
    Let $W$ and $V$ be two smooth algebraic subvarieties of $(\C^*)^d$, and for $0<\epsilon <1,$ let $U_{\epsilon}((S^1)^d)$ be an $\epsilon$-neighbourhood of  $(S^1)^d.$ Then,
    $$
    \frac{1}{m^{2d-(p+q)}} \int_{(t_1, t_2)\in   U_{\epsilon}((S^1)^{2d})}\Phi_m^*\big([t_1 V] \wedge [t_2 W]\big)\, d\nu(t_1) \otimes d\nu(t_2) \longrightarrow \sT_{\Trop_0(V)} \wedge \sT_{\Trop_0(W)},
    $$
    as $m \to \infty$. Here, the $d\nu$ is the normalised Lebesgue measures on $U_{\epsilon}((S^1)^d)$. Moreover, when $V$ and $W$ are smooth the integrand in the left-hand-side can be replaced by $\Phi_m^*[t_1 V \cap t_2 W].$
\end{theorem}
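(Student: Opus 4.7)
The plan is to identify the left-hand side with the wedge product of two smooth averages and then combine a pointwise dynamical tropicalisation with a dominated convergence argument.

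\emph{Reduction of the integrand.} Since $V$ and $W$ are smooth, for almost every $(t_1,t_2) \in U_{\epsilon}((S^1)^d)^2$ the translates $t_1 V$ and $t_2 W$ intersect transversely in $(\C^*)^d$, so $[t_1V]\wedge[t_2 W] = [t_1 V \cap t_2 W]$. Because $\Phi_m$ is \'etale on the torus, pullback commutes with the transverse wedge, giving
\[
\Phi_m^*([t_1V]\wedge[t_2W]) = \Phi_m^*[t_1V]\wedge\Phi_m^*[t_2W] = \Phi_m^*[t_1 V \cap t_2 W].
\]
This establishes the ``moreover'' clause and simultaneously lets us rewrite the integrand as $\Phi_m^*[t_1V\cap t_2 W]$.

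\emph{Pointwise limit.} For generic $(t_1,t_2)$, the set $Z(t_1,t_2) := t_1 V \cap t_2 W$ is a smooth algebraic subvariety of $(\C^*)^d$ of pure dimension $p+q-d$ (empty when $p+q<d$, in which case both sides of the theorem vanish). Theorem \ref{thm:dyn_trop_toric} gives
\[
\frac{1}{m^{2d-(p+q)}}\Phi_m^*[Z(t_1,t_2)] \longrightarrow \sT_{\Trop_0(Z(t_1,t_2))} \quad \text{as } m\to\infty.
\]
The crucial tropical input is the commutativity $\Trop_0(Z(t_1,t_2)) = \Trop_0(V)\cdot\Trop_0(W)$ as balanced complexes, which holds for generic $(t_1,t_2)$; together with Theorem \ref{thm:main-int} this forces the pointwise limit to equal $\sT_{\Trop_0(V)}\wedge\sT_{\Trop_0(W)}$, independently of $(t_1,t_2)$. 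This commutativity is a tropical Bertini statement: for generic translates, $Z(t_1,t_2)$ meets every toric stratum of a smooth projective toric compactification compatible with $\Trop_0(V)+\Trop_0(W)$ in the expected codimension, so its tropicalisation coincides with the stable intersection.

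\emph{Dominated convergence.} For any fixed test form $\phi$ with compact support in $(\C^*)^d$, the scalar pairings $m^{p+q-2d}\langle \Phi_m^*[Z(t_1,t_2)], \phi\rangle$ are bounded uniformly in $(t_1,t_2,m)$. Indeed, the projection formula for the $m^d$-to-one \'etale cover $\Phi_m$ reduces the mass estimate on $\supp{\phi}$ to a bound on $\deg Z(t_1,t_2)$, which is uniform in $(t_1,t_2)$ because the Newton polytopes of defining equations of $t_i V$ and $t_i W$ are independent of the translations (Bernstein--Khovanskii--Kushnirenko). Dominated convergence against the probability measure $d\nu(t_1)\otimes d\nu(t_2)$ on $U_\epsilon((S^1)^d)^2$ then yields convergence of the averaged currents to the $(t_1,t_2)$-independent pointwise limit.

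\emph{Main obstacle.} The hardest point is the tropical commutativity used in the pointwise step: because $\Trop_0$ is translation-invariant under the trivial valuation, the non-transversality of $\Trop_0(V)$ and $\Trop_0(W)$ in $\R^d$ cannot be rectified by translating $V$ or $W$ inside the torus. The resolution follows the spirit of \cite{Osserman-Payne}*{Proposition 3.3.2}: a Bertini-type argument inside a smooth projective toric compactification compatible with $\Trop_0(V)+\Trop_0(W)$ guarantees that the generic translate $Z(t_1,t_2)$ does not escape to any boundary stratum, so its set-theoretic tropicalisation agrees with the stable intersection.
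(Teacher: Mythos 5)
Your overall strategy (pointwise dynamical tropicalisation of the fibres $t_1V\cap t_2W$, then dominated convergence) is genuinely different from the paper's, but it hinges on a step that is not established: the claim that for generic $(t_1,t_2)$ one has $\Trop_0(t_1V\cap t_2W)=\Trop_0(V)\cdot\Trop_0(W)$ as weighted complexes. This is precisely the stable-intersection lifting theorem (Maclagan--Sturmfels Theorem 3.6.1, Osserman--Payne), i.e.\ essentially the statement this part of the paper aims to recover by analytic means, and your proposed justification does not prove it. Knowing that the generic translate $Z(t_1,t_2)$ meets every torus orbit of a compatible compactification in the expected dimension (the Osserman--Payne Proposition 3.3.2 type input) only controls the support and recession behaviour of $\Trop_0(Z(t_1,t_2))$; it does not identify that fan with the stable intersection, and in particular says nothing about the weights, which is exactly where the subtlety lies (compare the example following Theorem \ref{thm:proper-int}, where $\Trop_0(W\cap V)$ and $\Trop_0(W)\cdot\Trop_0(V)$ have the same support but different multiplicities). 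Since $\Trop_0$ is blind to translations by torus elements, you also cannot reduce to the transverse case of Theorem \ref{thm:proper-int}. As written, the ``crucial tropical input'' is an unproved assertion, and importing the Maclagan--Sturmfels theorem wholesale would be circular relative to the purpose of this section. (Two smaller points: Theorem \ref{thm:dyn_trop_toric} is stated for irreducible varieties, so you must sum over the components of $Z(t_1,t_2)$; and ``generic'' must be upgraded to ``outside a measure-zero set'' for the dominated convergence, which is fine for Zariski-open conditions but should be said.)

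The paper's proof avoids any pointwise statement in $(t_1,t_2)$: it writes $\Phi_m^*[t_1V]\wedge\Phi_m^*[t_2W]$ as $\big(\Phi_m^*[V]\otimes\Phi_m^*[W]\big)\wedge t_1^{-1/m}t_2^{-1/m}[\Delta]$ and observes that the average of the translated diagonal currents over $U_\epsilon((S^1)^{2d})$ is a superposition of tropical currents $\sT^{R}_{\Delta}$ attached to rescaled compact tori, each of which has a continuous superpotential on a compatible toric compactification. The limit then follows from Theorem \ref{thm:dyn_trop_toric}, Theorem \ref{thm:tensor-conv} and Proposition \ref{prop:slicing-CSP}, and is computed as $(\sT_{\Trop_0(V)}\otimes\sT_{\Trop_0(W)})\wedge\sT_{\Delta}=\sT_{(\Trop_0(V)\times\Trop_0(W))\cdot\Delta}$. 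In other words, the averaging is not merely a device for invoking dominated convergence: it is what restores continuity of the wedge product, and it is the mechanism that replaces the generic tropical commutativity you are assuming. To salvage your route you would need an independent proof of the generic equality $\Trop_0(t_1V\cap t_2W)=\Trop_0(V)\cdot\Trop_0(W)$ with multiplicities, which is a substantial theorem in its own right.
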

\begin{proof}
    By a classical result of Kleiman \cite{Kleiman}, the intersection of generic translations $t_1 V \cap  t_2 W$ is proper and transversal in case $V$ and $W$ are smooth. Since $\Phi_m^*$ preserves proper intersections and transversality, we can separate the above integrand into $\Phi_m^*[t_1 V] \wedge \Phi_m^*[t_2 W]$. Using polar coordinates $(r_i, \theta_i)$ for $t_i$, we have
    $$
    \int_{t_1 \in U_{\epsilon}((S^1)^d)} \Phi^*_m[t_1 V] \, d\nu(t_1) = \int_{t_1 \in U_{\epsilon}((S^1)^d)} t_1^{1/m} \, \Phi^*_m[V] \, d\nu(t_1).
    $$
    Here, with abuse of notation, we choose the $m$-th root $t_1^{1/m}$ to be the first root of $t^{1/m}.$ We can also obtain a similar equation for $t_2 W.$ Further, 
    $$
       \big( \Phi_m^*[t_1 V] \otimes \Phi_m^*[t_2 W] ) \wedge [\Delta] = \big(\Phi_m^*[ V] \otimes \Phi_m^*[ W] \big) \wedge  (t_1)^{-1/m} (t_2)^{-1/m}  [\Delta].
    $$   

    $$
    \int_{U_{\epsilon}(S^1)^{2n}} (t_1)^{-1/m} (t_2)^{-1/m}  [\Delta] = \int_{[(1-\epsilon) , 1+\epsilon]} \sT^{R_m}_{\Delta} dR,
    $$
     where $R_m = (|t_1|^{-1/m}, |t_2|^{-1/m}),$ and $\sT^{R_m}_{\Delta}$ is the tropical current associated with the diagonal $\Delta \subseteq \R^d \times \R^d,$ where the compact torus is rescaled to $R_m (S^1)^{2d}.$ In a toric variety compatible with $\Delta \subseteq \R^d \times \R^d,$ for any $R,$ $\overline{\sT^{R_m}_{\Delta}}$ has a continuous superpotential, and $\overline{\sT^{R_m}_{\Delta}}$ is SP-convergent to $\overline{\sT}_{\Delta}.$ Using Proposition \ref{prop:slicing-CSP}, Theorem \ref{thm:tensor-conv}, and restricting yields:
     $$
      \frac{1}{m^{2d-(p+q)}} \big( \Phi_m^*[t_1 V] \otimes \Phi_m^*[t_2 W] )\wedge  \int_{[(1-\epsilon) , 1+\epsilon]} \sT^{R_m}_{\Delta} dR \lto \big(\sT_{\Trop_0(V)} \otimes \sT_{\Trop_0(W)} \big) \wedge \sT_{\Delta},
     $$
     as $m\to \infty.$ The latter equals the tropical current 
     $$
     \sT_{\big(\Trop_0(V) \times \Trop_0(W) \big)\cdot \Delta}, 
     $$
     which can be identified with $\sT_{\Trop_0(V)} \wedge \sT_{\Trop_0(W)}.$
\end{proof}

\bibliographystyle{alpha}
\bibliography{bibnew}

\end{document}